\newtheorem{theorem}{Theorem}[section]
\newtheorem{proposition}[theorem]{Proposition}
\newtheorem{lemma}[theorem]{Lemma}
\newtheorem{conjecture}[theorem]{Conjecture}
\DeclareMathOperator{\Vol}{Vol}
\DeclareMathOperator{\Hess}{Hess}
\DeclareMathOperator{\inj}{inj}
\DeclareMathOperator{\dVol}{dVol}
\DeclareMathOperator{\dL}{dL}
\DeclareMathOperator{\length}{L}
\DeclareMathOperator{\Image}{Im}
\DeclareMathOperator{\trace}{tr}
\DeclareMathOperator{\eval}{ev}
\DeclareMathOperator{\spectrum}{spec}
\DeclareMathOperator{\support}{spt}
\DeclareMathOperator{\reg}{Reg}
\theoremstyle{definition}
\newtheorem{definition}[theorem]{Definition}
\newtheorem{notation}[theorem]{Notation}
\newtheorem{remark}[theorem]{Remark}
\def\Xint#1{\mathchoice
    {\XXint\displaystyle\textstyle{#1}}%
    {\XXint\textstyle\scriptstyle{#1}}%
    {\XXint\scriptstyle\scriptscriptstyle{#1}}%
    {\XXint\scriptscriptstyle\scriptscriptstyle{#1}}%
      \!\int}
\def\XXint#1#2#3{{\setbox0=\hbox{$#1{#2#3}{\int}$}
    \vcenter{\hbox{$#2#3$}}\kern-.5\wd0}}
\def\dashint{\Xint-}
\def\YYint#1#2#3{{\setbox0=\hbox{$#1{#2#3}{\int}$}
    \lower1ex\hbox{$#2#3$}\kern-.46\wd0}}
\def\YYYint#1#2#3{{\setbox0=\hbox{$#1{#2#3}{\int}$}
    \lower0.35ex\hbox{$#2#3$}\kern-.48\wd0}}
\def\ZZint#1#2#3{{\setbox0=\hbox{$#1{#2#3}{\int}$}
    \raise1.15ex\hbox{$#2#3$}\kern-.57\wd0}}
\def\ZZZint#1#2#3{{\setbox0=\hbox{$#1{#2#3}{\int}$}
    \raise0.85ex\hbox{$#2#3$}\kern-.53\wd0}}
\title{On the equidistribution of closed geodesics and geodesic nets}
\author{Xinze Li and Bruno Staffa}
\begin{document}
\maketitle
\begin{abstract}
We show that given a closed $n$-manifold $M$, for a Baire-generic set of Riemannian metrics $g$ on $M$ there exists a sequence of closed geodesics that are equidistributed in $M$ if $n=2$; and an equidistributed sequence of embedded stationary geodesic nets if $n=3$. One of the main tools that we use is the Weyl Law for the volume spectrum for $1$-cycles, proved in \cite{LMN} for $n=2$ and in \cite{GL22} for $n=3$. We show that our proof of the equidistribution of stationary geodesic nets can be generalized for any dimension $n\geq 2$ provided the Weyl Law for $1$-cycles in $n$-manifolds holds.
\end{abstract}
\tableofcontents

\section{Introduction}

Marques, Neves and Song proved in \cite{MNS} that for a generic set of Riemannian metrics in a closed manifold $M^{n}$, $3\leq n\leq 7$ there exists a sequence of closed, embedded, connected minimal hypersurfaces which is equidistributed in $M$.
In this paper, we study the equidistribution of closed geodesics and stationary geodesic nets (which are $1$-dimensional analogs of minimal hypersurfaces) on a Riemannian manifold $(M^{n},g)$, $n\geq 2$. We prove the following two results, for dimensions $2$ and $3$ of the ambient manifold respectively:

\begin{theorem}\label{thm1}
Let $M$ be a closed $2$-manifold. For a Baire-generic set of $C^{\infty}$ Riemannian metrics $g$ on $M$, there exists a set of closed geodesics that is equidistributed in $M$. Specifically, for every $g$ in the generic set, there exists a sequence $\{\gamma_i:S^{1} \rightarrow M\}$ of closed geodesics in $(M, g)$, such that for every $C^\infty$ function $f:M \rightarrow \mathbb{R}$ we have
\begin{equation*}
    \lim_{k \rightarrow \infty}\frac{\sum_{i = 1}^k\int_{\gamma_i}f \dL_g}{\sum_{i = 1}^k \length_{g}(\gamma_i)} = \frac{\int_M f \dVol_g}{\Vol(M, g)}.
\end{equation*}
\end{theorem}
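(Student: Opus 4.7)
The plan is to adapt the strategy of Marques--Neves--Song \cite{MNS} from minimal hypersurfaces to closed geodesics on surfaces, using the Weyl Law of Liokumovich--Marques--Neves \cite{LMN} in place of the Weyl Law for $(n-1)$-cycles. For a metric $g$ on $M$, let $\omega_p(M,g)$ denote the $p$-th width of the space of $\mathbb{Z}_2$-relative $1$-cycles in $M$. The Weyl Law asserts that
\begin{equation*}
\lim_{p\to\infty} \omega_p(M,g)\, p^{-1/2} = a(2)\,\Vol(M,g)^{1/2}
\end{equation*}
for a universal constant $a(2)>0$, while min-max theory realizes each $\omega_p(M,g)$ as the mass of a stationary integral $1$-varifold $V_p(g)$; on a surface this is a finite formal sum $V_p(g) = \sum_j n_{p,j}(g)\,\gamma_{p,j}(g)$ of closed geodesics with positive integer multiplicities.

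The next step is to invoke the classical bumpy-metric theorem for closed geodesics (after Abraham): on a $C^{\infty}$-generic (Baire) set of metrics every closed geodesic is non-degenerate, and in particular only finitely many closed geodesics have length below any given bound. Combined with upper semicontinuity of the widths, this lets us pin down, for each $p$, a canonical realizer $V_p(g)$ whose dependence on $g$ is regular enough for the variational arguments below.

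The crux is a first-variation formula. For a conformal perturbation $g_t = (1+tf)\,g$ with $f\in C^{\infty}(M)$ one computes $\length_{g_t}(\gamma) = \length_g(\gamma) + \tfrac{t}{2}\int_\gamma f\,\dL_g + o(t)$ and $\Vol(M,g_t) = \Vol(M,g) + t\int_M f\,\dVol_g + o(t)$. Hence the map $t\mapsto \omega_p(M,g_t)$ is locally Lipschitz, and at points of differentiability
\begin{equation*}
\frac{d}{dt}\bigg|_{t=0}\omega_p(M,g_t) = \tfrac{1}{2}\sum_j n_{p,j}(g)\int_{\gamma_{p,j}(g)} f\,\dL_g,
\end{equation*}
whereas differentiating the Weyl-Law limit $p^{-1/2}\omega_p(M,g_t) \to a(2)\,\Vol(M,g_t)^{1/2}$ at $t=0$ produces
\begin{equation*}
\tfrac{a(2)}{2}\,\Vol(M,g)^{-1/2}\int_M f\,\dVol_g.
\end{equation*}

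To conclude, one applies a convex-analysis lemma in the spirit of \cite{MNS}: if locally Lipschitz functions $F_p$ converge to a differentiable function $F$ on a Banach space, then along a subsequence their Clarke subdifferentials converge to $dF$ at generic points. Matching the two derivative expressions above and dividing by $p^{-1/2}\omega_p(M,g) \to a(2)\Vol(M,g)^{1/2}$ yields, for a subsequence $p_k\to\infty$ and every $f\in C^{\infty}(M)$,
\begin{equation*}
\lim_{k\to\infty} \frac{\sum_j n_{p_k,j}(g)\int_{\gamma_{p_k,j}(g)} f\,\dL_g}{\sum_j n_{p_k,j}(g)\,\length_g(\gamma_{p_k,j}(g))} = \frac{\int_M f\,\dVol_g}{\Vol(M,g)};
\end{equation*}
listing the components $\gamma_{p_k,j}(g)$ with multiplicity $n_{p_k,j}(g)$ produces the required sequence $\{\gamma_i\}$. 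The main obstacle I foresee is the uniqueness/canonicity of $V_p(g)$ used in the variation formula: in the $1$-dimensional setting, the min-max varifold can carry multiplicities and self-intersections that do not arise in the hypersurface case, so one has to be careful both in exploiting the bumpy condition to single out $V_p(g)$ and in reducing the countable supply of test functions $f$ to a single subsequence via a diagonal argument against a dense subset of $C^{\infty}(M)$.
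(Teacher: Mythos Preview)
Your outline has the right overall shape (Weyl Law plus a differentiation argument in the style of \cite{MNS}), but there are two genuine gaps.

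First, the assertion that ``on a surface this is a finite formal sum $V_p(g)=\sum_j n_{p,j}(g)\,\gamma_{p,j}(g)$ of closed geodesics'' is not a consequence of Almgren--Pitts min-max alone. A stationary integral $1$-varifold on a surface can a priori be a geodesic \emph{net} with triple (or higher) junctions, not a union of closed curves; the bumpy-metric theorem for closed geodesics says nothing about such objects. That the $p$-widths on a closed surface are actually realized by finite unions of immersed closed geodesics is a recent result of Chodosh--Mantoulidis \cite{Chodosh}, and the paper uses it in an essential way (together with the Structure Theorem for stationary geodesic networks \cite{Staffa} to control limits). Without this input your proposed realizer $V_p(g)$ need not consist of closed geodesics at all.

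Second, the single-parameter conformal family $g_t=(1+tf)g$ cannot deliver equidistribution for \emph{all} $f$ simultaneously. Differentiating in $t$ picks out the one function $f$ used to build the family, and the resulting metric (and subsequence $p_k$) depend on that $f$; a diagonal argument over a dense subset of $C^\infty(M)$ does not repair this, since each test function would require its own perturbation. The paper circumvents this by working with a $K$-parameter family $\hat g(t)=e^{2\sum_k t_k\psi_k}g$ built from a partition of unity $\{\psi_k\}$ subordinate to a fine cover, applying the gradient lemma of \cite{MNS} in $\mathbb{R}^K$, and then approximating an arbitrary $f$ by Riemann sums $\sum_k f(q_k)\psi_k$. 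Relatedly, the ``convex-analysis lemma'' you invoke is too optimistic: uniform convergence of Lipschitz functions to a constant does \emph{not} force Clarke subdifferentials to approach $0$ anywhere (the paper gives the zigzag counterexample); the correct statement only produces $N+1$ sequences whose gradient limits have $0$ near their convex hull, which is why one ends up with convex combinations of several geodesics rather than a single $V_p(g)$.
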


\begin{theorem}\label{thm2}
Let $M$ be a closed $3$-manifold.  For a Baire-generic set of $C^{\infty}$ Riemannian metrics $g$ on $M$, there exists a set of connected embedded stationary geodesic nets that is equidistributed in $M$. Specifically, for every $g$ in the generic set , there exists a sequence $\{\gamma_i:\Gamma_i \rightarrow M\}$ of connected embedded stationary geodesic nets in $(M, g)$, such that for every $C^\infty$ function $f:M \rightarrow \mathbb{R}$ we have
\begin{equation*}
    \lim_{k \rightarrow \infty}\frac{\sum_{i = 1}^k\int_{\gamma_i}f \dL_g}{\sum_{i = 1}^k \length_{g}(\gamma_i)} = \frac{\int_M f \dVol_g}{\Vol(M, g)}.
\end{equation*}
\end{theorem}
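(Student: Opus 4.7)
The plan is to adapt the strategy of Marques--Neves--Song \cite{MNS} from closed embedded minimal hypersurfaces to embedded stationary geodesic nets in dimension three. Three ingredients of that strategy need substitutes in our setting: the codimension-one Weyl law, which we replace by the Weyl law for $1$-cycles in $3$-manifolds from \cite{GL22}, namely $\omega_p(M^3, g)\, p^{-1/3} \to a(3)\Vol(M, g)^{1/3}$; the bumpy metric theorem of White together with the Marques--Neves generic embeddedness and multiplicity-one results, which we replace by the corresponding bumpy-type and regularity statements for stationary geodesic nets; and the Lipschitz and first-variation dependence of the $p$-widths on the ambient metric.

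I would first reduce the theorem to the following approximation claim: for a generic metric $g$, every $f \in C^\infty(M)$, and every $\eps > 0$, there exist $k \in \N$ and a finite family $\{\gamma_i\}_{i=1}^k$ of embedded connected stationary geodesic nets realizing the widths $\omega_{p_i}(M, g)$ for which the Ces\`aro average $\bigl(\sum_i \length_g(\gamma_i)\bigr)^{-1}\sum_i \int_{\gamma_i} f \, \dL_g$ lies within $\eps$ of $\Vol(M, g)^{-1}\int_M f \, \dVol_g$; a diagonal extraction over a countable dense family $\{(f_j, \eps_j)\} \subset C^\infty(M) \times (0, \infty)$ then produces the single sequence required in the statement. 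The analytic heart is the first-variation identity for the conformal perturbation $g_t = (1 + 2tf)g$,
\[
\frac{d}{dt}\omega_p(g_t)\bigg|_{t=0} = \int_{\gamma_p} f \, \dL_g,
\]
valid at any $t$ of differentiability of $\omega_p(g_t)$ with $\gamma_p$ a min-max realization; combined with the Weyl law and $\frac{d}{dt}\Vol(g_t)|_{t=0} = 3\int_M f \, \dVol_g$, differentiating in $t$ forces $\length_g(\gamma_p)^{-1}\int_{\gamma_p} f \, \dL_g \to \Vol(M, g)^{-1}\int_M f \, \dVol_g$ along a subsequence $p \to \infty$.

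The equidistribution is then obtained by a Baire category argument: the set of metrics for which the $(f_j, \eps_j)$-approximation holds is open (by continuity of the widths and the integrals in the metric) and dense (if it failed at some $g$, the first-variation identity and the Weyl law would be incompatible along a suitable perturbation of $g$). Taking a countable intersection of these dense open sets over a family $\{(f_j,\eps_j)\}$ dense in $C^\infty(M) \times (0,\infty)$ and invoking Baire's theorem yields the generic set, and a final diagonal extraction produces the desired sequence of geodesic nets.

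The main obstacle is the generic regularity and structure of min-max stationary geodesic nets. In the hypersurface case, \cite{MNS} rely on White's bumpy metric theorem together with Marques--Neves regularity to guarantee that generically each $\omega_p(g)$ is attained by a smooth embedded connected hypersurface of multiplicity one with non-degenerate second variation, and all these properties feed into the first-variation argument. For stationary geodesic nets, one must handle the singular junctions of valence at least three, and the analogous bumpy and embeddedness/connectivity statements, together with a suitable selection of a single min-max realization used to differentiate the width, are more delicate; in my view this regularity input is the main point where the proof genuinely departs from \cite{MNS}. A secondary difficulty is the bookkeeping of the integer multiplicities with which the widths are attained, which enter the Ces\`aro average and must be controlled uniformly in $p$.
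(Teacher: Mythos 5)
Your outline follows the right general (MNS-style) strategy, but its analytic heart is exactly the naive argument that the paper explains does \emph{not} work, and the two repairs that make the actual proof go through are missing. First, from uniform convergence of $p^{-2/3}\omega_p^1(g_t)$ to $\alpha(3,1)\Vol(M,g_t)^{1/3}$ you cannot conclude that ``differentiating in $t$ forces'' $\dashint_{\gamma_p}f\,\dL_g\to\dashint_M f\,\dVol_g$ along a subsequence: a sequence of zigzag functions can converge uniformly to a constant while its derivatives stay bounded away from zero at every point. The paper replaces this step by Lemma \ref{gradient} (MNS Lemma 3), which only yields $K+1$ nearby differentiability points whose gradients admit a convex combination of small norm; accordingly the intermediate output is a \emph{convex combination} $\sum_j\alpha_j\dashint_{\gamma_j}\psi_k\,\dL$ over several nets, later rationalized into integer multiplicities, not a single min-max realization per width. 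Second, the identity $\frac{d}{dt}\omega_p(g_t)=\int_{\gamma_p}f\,\dL_g$ at a point of differentiability, with $\gamma_p$ ``a min-max realization,'' is not automatic (min-max values give no two-sided comparison through one competitor); establishing it is the content of Proposition \ref{Perturbation}, which needs the White/Staffa structure theorems, transversality of the family to the Fredholm projections, Lebesgue density points of the closed sets $\mathcal{B}_{\Gamma,M}$, and the basis trick of Lemma \ref{lcountable} to select one net whose first variation computes all directional derivatives. In your write-up this identity is assumed rather than proved.

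There are also gaps in how you pass from one function to all of $C^\infty(M)$ and in the Baire argument. Perturbing along a single $f$ gives, at best, an approximate identity for that one $f$ and at a metric depending on $f$; a ``diagonal extraction'' over a countable family $(f_j,\eps_j)$ does not by itself produce one sequence whose Ces\`aro averages converge for \emph{every} $f$, since the finite families attached to different $f_j$ are unrelated and nothing controls the mixed partial sums. The paper's fix is structural: fix a fine partition with bump functions $\psi_k$, run the $K$-parameter conformal family $e^{2\sum_k t_k\psi_k}g$ to get (\ref{EqP}) for all $\psi_k$ simultaneously at a single nearby metric, and then approximate an arbitrary $f$ by the Riemann sum $\sum_k f(q_k)\psi_k$ (Proposition \ref{Propmj}), so that the $m$-th list works for every $f$ with error $D(f)/m$; only then does the MNS concatenation produce the sequence. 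Finally, your openness claim ``by continuity of the widths and the integrals'' fails as stated: the requirement that specific nets realize $\omega_{p_i}$ is not stable under perturbation of the metric. The paper makes the sets open by dropping the width-realization condition and demanding instead nondegenerate (bumpy) stationary nets satisfying the strict inequality, so that the nets persist under perturbation by the implicit function theorem.
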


\begin{remark}
We have an equivalent notion of equidistribution for a sequence of closed geodesics or geodesic nets: we say that $\{\gamma_{i}\}_{i\in\mathbb{N}}$ is equidistributed in $(M,g)$ if for every open subset $U\subseteq M$ it holds
    \begin{equation*}
    \lim_{k\to\infty}\frac{\sum_{i=1}^{k}\length_{g}(\gamma_{i}\cap U)}{\sum_{i=1}^{k}\length_{g}(\gamma_{i})}=\frac{\Vol_{g}U}{\Vol_{g}M}.
\end{equation*}
\end{remark}

Theorem \ref{thm2} is, as far as the authors know, the first result on equidistribution of $k$-stationary varifolds in Riemannian $n$-manifolds for $k<n-1$ (i.e. in codimension greater than $1$).
Regarding Theorem \ref{thm1}, similar equidistribution results for closed geodesics have been proved for compact hyperbolic manifolds in \cite{Bowen} in 1972 and for compact surfaces with constant negative curvature in \cite{Pollicott} in 1985. More recently, those results were extended to non-compact manifolds with negative curvature in \cite{Schapira} and to surfaces without conjugate points in \cite{Climenhaga}. The four previous works have in common that they approach the problem from the dynamical systems point of view. In the present paper, we approach it using Almgren-Pitts min-max theory (as it was done in \cite{MNS} for minimal hypersurfaces). Additionally, Theorem \ref{thm1} is the first equidistribution result for closed geodesics on closed surfaces that is proved for generic metrics, without any restriction regarding the curvature of the metric or the presence of conjugate points.

Our proof is inspired by the ideas in \cite{MNS}. There are two key results used in \cite{MNS} to prove equidistribution of minimal hypersurfaces for generic metrics: the Bumpy Metrics Theorem of Brian White \cite{White} and the Weyl Law for the Volume Spectrum proved by Liokumovich, Marques and Neves in \cite{LMN}: given a compact Riemannian manifold $(M^{n},g)$ with $n\geq 2$ (possibly with boundary), we have
    \begin{equation*}
        \lim_{p \rightarrow \infty}\omega_p^{n-1}(M,g)p^{-\frac{1}{n}} = \alpha(n)\Vol(M,g)^{\frac{n - 1}{n}}
    \end{equation*}
for some constant $\alpha(n) > 0$. Here, given $1\leq k\leq n-1$ we denote by $\omega_{p}^{k}(M,g)$ the $k$-dimensional $p$-width of $M$ with respect to the metric $g$ (for background on this, see \cite{GuthMinMax}, \cite{LS}, \cite{MN} \cite{IMN}). It was conjectured by Gromov (see \cite[section~8.4]{Gromov02}) that the Weyl law can be extended to other dimensions and codimensions. In this work, we are interested in the case of $1$-dimensional cycles. The following is the most general version of the Weyl law we could expect for $1$-cycles.

\begin{conjecture}\label{Weyl law 1}
Let $(M^{n},g)$ be a closed $n$-dimensional manifold, $n\geq 2$. Then there exists a constant $\alpha(n,1)>0$ such that
\begin{equation*}
     \lim_{p \rightarrow \infty}\omega^1_p(M^n, g)p^{-\frac{n - 1}{n}} = \alpha(n, 1)\Vol(M^n, g)^{\frac{1}{n}}.
\end{equation*}
\end{conjecture}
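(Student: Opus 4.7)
The plan is to adapt the strategy of Liokumovich--Marques--Neves \cite{LMN} (for $(n-1)$-cycles) and of Guth--Liokumovich \cite{GL22} (for $1$-cycles in $3$-manifolds) to general dimension. The approach splits into two independent parts: an asymptotic upper bound of the form
$$\omega_{p}^{1}(M,g)\leq (1+o(1))\alpha(n,1)\Vol(M,g)^{1/n}p^{(n-1)/n},$$
and a matching lower bound; once both are in place, a Fekete-type subadditivity argument shows that the $\limsup$ equals the $\liminf$ and produces a universal constant $\alpha(n,1)$ independent of $(M,g)$.

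For the upper bound I would first reduce to the model case of the Euclidean unit cube $Q_{n}=[0,1]^{n}$. If one can construct, for every $p$, a $p$-sweepout of $Q_{n}$ by $1$-cycles of mass at most $C(n)p^{(n-1)/n}$, then by subdividing $(M,g)$ into $p$ small open sets $U_{1},\ldots,U_{p}$, each almost isometric to a Euclidean cube of side $(\Vol(M,g)/p)^{1/n}$, and transplanting the model sweepouts via the join/wedge construction for $p$-sweepouts, one obtains a $p$-sweepout of $M$ of mass $\leq C(n)\Vol(M,g)^{1/n}p^{(n-1)/n}+o(p^{(n-1)/n})$. The core substep is therefore the cube case, for which a natural idea is induction on $n$: decompose $Q_{n}$ into roughly $p^{1/n}$ horizontal slabs isometric to $[0,p^{-1/n}]\times Q_{n-1}$, build inside each slab a $p^{(n-1)/n}$-sweepout by $1$-cycles using the inductive hypothesis for $Q_{n-1}$, and then combine these across slabs using the standard join formula for $p$-widths.

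For the lower bound I would run the Lusternik--Schnirelmann argument from \cite{LMN}. Given a $p$-sweepout $\Phi:X\to\mathcal{Z}_{1}(M;\mathbb{Z}_{2})$ and a partition of $M$ into $p$ domains $U_{1},\ldots,U_{p}$ of volume approximately $\Vol(M,g)/p$, each bi-Lipschitz close to a small Euclidean cube, the key observation is that the restriction of cycles to $U_{i}$ defines a map whose nontriviality in cohomology forces
$$\sup_{x\in X}\mathbf{M}\bigl(\Phi(x)|_{U_{i}}\bigr)\geq \omega_{1}^{1}(\overline{U_{i}},g)\geq c(n)(\Vol(M,g)/p)^{1/n}-o(p^{-1/n}).$$
A covering argument then produces a single parameter $x^{*}\in X$ at which a definite fraction of these bounds are achieved simultaneously, yielding $\mathbf{M}(\Phi(x^{*}))\gtrsim p\cdot(\Vol(M,g)/p)^{1/n}=\Vol(M,g)^{1/n}p^{(n-1)/n}$.

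The main obstacle I foresee is the cube upper bound for $n\geq 4$. In codimension $1$ one sweeps by level sets of Morse functions, and in the $n=3$ case of \cite{GL22} one can still visualize $1$-cycles as spatial curves forming a controllable family of theta-graphs; but for $n\geq 4$ the cycles have codimension at least $3$, so no level-set sweeping is available, and the inductive step sketched above requires a delicate cohomological accounting to ensure that the parameter space of the constructed family realizes the $p$-th power of the generator of $H^{*}(\mathcal{Z}_{1}(Q_{n});\mathbb{Z}_{2})$. Once this topological bookkeeping is done, the passage from the cube model to arbitrary $(M^{n},g)$ and the deduction of the universal constant $\alpha(n,1)$ from the matching bounds follows the same subadditivity template as in \cite{LMN}.
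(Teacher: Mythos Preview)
This statement is a \emph{conjecture} in the paper, not a theorem; the paper does not prove it and does not claim to. The authors explicitly record that Conjecture~\ref{Weyl law 1} is known only for $n=2$ (a special case of \cite{LMN}) and $n=3$ (proved in \cite{GL22}), and is open for $n\geq 4$. Its role in the paper is purely as a \emph{hypothesis}: Theorem~\ref{thm3} establishes equidistribution of stationary geodesic nets conditionally on it, and the unconditional Theorems~\ref{thm1} and~\ref{thm2} then follow by invoking the two settled cases.

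So there is no proof in the paper to compare your proposal against. As a plan of attack on the open problem your outline is reasonable, and you have correctly located the obstruction yourself: the construction of efficient $p$-sweepouts of the $n$-cube by $1$-cycles in codimension $\geq 3$, together with the cohomological bookkeeping needed to certify that the constructed family detects the $p$-th cup power of the generator, is precisely what is missing and is why the conjecture remains open. I would add one caution: the $n=3$ argument of \cite{GL22} does not proceed via the inductive cube construction you sketch, but rather through a parametric coarea-type inequality that reduces the $1$-dimensional volume spectrum to the already-known codimension-$1$ spectrum. Thus even the one nontrivial settled case does not validate your specific inductive scheme, and for $n\geq 4$ your proposal remains a program with an acknowledged gap rather than a proof.
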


So far, Conjecture \ref{Weyl law 1} has been proved for $n=2$ as a particular case of \cite{LMN} and recently for $n=3$ by Guth and Liokumovich in their work \cite{GL22}. In this article, we use those two versions of the Weyl law to prove Theorem \ref{thm1}  and Theorem \ref{thm2}; and we also use the Structure Theorem for Stationary Geodesic Networks proved by Staffa in \cite{Staffa} and the Structure Theorem of White (\cite{White}) for the case of embedded closed geodesics. The work of Chodosh and Mantoulidis in \cite{Chodosh} is used to upgrade the equidistribution result for stationary geodesic networks to closed geodesics in dimension $2$. The only obstruction to extend our proof of the equidistribution of stationary geodesic nets to arbitrary dimensions of the ambient manifold $M$ is that Conjecture \ref{Weyl law 1} has not been proved yet if $n>3$. As all the rest of our argument works for any dimension $n>3$, what we do is to prove the following result and then show that it implies Theorem \ref{thm1} and Theorem \ref{thm2}.

\begin{theorem}\label{thm3}
Let $M^n$, $n\geq 2$ be a closed manifold. Assume that the Weyl law for $1$-cycles in $n$-manifolds holds. Then for a Baire-generic set of $C^{\infty}$ Riemannian metrics $g$ on $M$, there exists a set of connected embedded stationary geodesic nets that is equidistributed in $M$. Specifically, for every $g$ in the generic set , there exists a sequence $\{\gamma_i:\Gamma_i \rightarrow M\}$ of connected embedded stationary geodesic nets in $(M, g)$, such that for every $C^\infty$ function $f:M \rightarrow \mathbb{R}$ we have
\begin{equation*}
    \lim_{k \rightarrow \infty}\frac{\sum_{i = 1}^k\int_{\gamma_i}f \dL_g}{\sum_{i = 1}^k \length_{g}(\gamma_i)} = \frac{\int_M f \dVol_g}{\Vol(M, g)}.
\end{equation*}
\end{theorem}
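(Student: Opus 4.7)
The plan is to adapt the Marques--Neves--Song strategy \cite{MNS} to the setting of $1$-cycles. Fix a base metric $g_{0}$ on $M$. For $f \in C^{\infty}(M,\R)$ with $\|f\|_{C^{0}}<1$, set $g_{f}:=(1+f)g_{0}$ and define
\begin{equation*}
W_{p}(f) := p^{-(n-1)/n}\,\omega_{p}^{1}(M,g_{f}), \qquad W(f) := \alpha(n,1)\,\Vol(M,g_{f})^{1/n}.
\end{equation*}
The assumed Weyl law for $1$-cycles gives $W_{p}(f) \to W(f)$ pointwise.

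\textbf{Step 1 (Regularity of the functionals).} Under $g_{f}$, a curve $\gamma$ has length $\int_{\gamma}(1+f)^{1/2}\,\dL_{g_{0}}$. Because $f \mapsto (1+f)^{1/2}$ is concave, each $W_{p}$ is a sup--inf of concave functions, hence itself concave in $f$; a direct scaling bound shows $\{W_{p}\}$ is equi-Lipschitz in $C^{0}$ on bounded sets. Pointwise convergence then upgrades to locally uniform convergence $W_{p} \to W$. The limit $W$ is smooth, with
\begin{equation*}
DW_{0}(f) = \frac{\alpha(n,1)}{2}\,\Vol(M,g_{0})^{1/n-1}\int_{M} f\,\dVol_{g_{0}}.
\end{equation*}

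\textbf{Step 2 (Min-max realization and derivative of $W_{p}$).} Combining Almgren--Pitts min-max for $1$-cycles with Staffa's Structure Theorem \cite{Staffa} (and, when $n=2$, White's bumpy metrics theorem \cite{White}), one obtains a residual set of metrics on $M$ such that for every $p$ the width $\omega_{p}^{1}(M,g_{0})$ is realized by an integral stationary $1$-varifold $V_{p}=\sum_{i}m_{p,i}[\Gamma_{p,i}]$, with $\Gamma_{p,i}$ embedded connected stationary geodesic nets and $m_{p,i}\in\mathbb{N}$. A first variation computation along the conformal perturbation $g_{0}\rightsquigarrow g_{tf}$ at this realizer yields
\begin{equation*}
\frac{d}{dt}\bigg|_{t=0}W_{p}(tf) = \frac{1}{2}\,p^{-(n-1)/n}\int_{M} f \,d\|V_{p}\|_{g_{0}}.
\end{equation*}

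\textbf{Step 3 (Passage to the limit).} By a standard convex-analytic fact, if concave functions converge locally uniformly to a differentiable limit, then their superdifferentials converge at every point of differentiability. Applying this to $W_{p} \to W$ at $f = 0$ gives, for every $f \in C^{\infty}(M)$,
\begin{equation*}
\lim_{p \to \infty}p^{-(n-1)/n}\int_{M} f\, d\|V_{p}\|_{g_{0}} = \alpha(n,1)\,\Vol(M,g_{0})^{1/n-1}\int_{M} f\,\dVol_{g_{0}}.
\end{equation*}
Dividing by the Weyl law asymptotic $\omega_{p}^{1}(M,g_{0}) \sim \alpha(n,1)\,\Vol(M,g_{0})^{1/n}\,p^{(n-1)/n}$ produces
\begin{equation*}
\lim_{p \to \infty}\frac{\int_{M} f\,d\|V_{p}\|_{g_{0}}}{\omega_{p}^{1}(M,g_{0})} = \frac{\int_{M} f\,\dVol_{g_{0}}}{\Vol(M,g_{0})}.
\end{equation*}
This is the required equidistribution of $\{V_{p}\}$ at the generic metric $g_{0}$. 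Finally, decompose each $V_{p}$ into its connected embedded components via \cite{Staffa} and list them (with repetitions matching their multiplicity in $V_{p}$) to obtain the sequence $\{\gamma_{i}:\Gamma_{i}\to M\}$ claimed in the theorem.

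The principal obstacle will be the derivative identity of Step 2: one needs a min-max realizer $V_{p}$ that depends essentially Lipschitz-continuously on conformal perturbations of $g_{0}$, and one must rule out multiplicity and non-uniqueness phenomena that would cause the one-sided derivatives of $W_{p}$ to disagree with the varifold pairing. Following the template of \cite{MNS}, this is handled by restricting to a residual set of metrics on which the Structure Theorem \cite{Staffa} yields a canonical min-max realizer and the sup--inf defining $\omega_{p}^{1}$ is attained by a stable family of sweepouts, so that the desired first-variation identity holds with equality for every perturbation $f$.
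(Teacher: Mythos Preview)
The central claim in Step 1, that $f\mapsto W_{p}(f)$ is concave, is false, and with it Steps 2 and 3 collapse. The width $\omega_{p}^{1}(M,g_{f})$ is an \emph{inf--sup}, not a sup--inf: for a fixed sweepout $\Phi$ one first takes $\sup_{x}\mathbf{M}_{g_{f}}(\Phi(x))$, and a pointwise supremum of concave functions is not concave (think of $\max(t,-t)=|t|$). Concretely, take the dumbbell example discussed in the Introduction: two round spheres joined by a thin neck, and $f_{t}=t\,h$ with $h\equiv 1$ on one sphere and $h\equiv -1$ on the other. Then the width as a function of $t$ is $2\pi\max(\sqrt{1+t},\sqrt{1-t})$, whose one-sided derivatives at $t=0$ are $\pm\pi$; the graph has an \emph{upward} kink, so the function is convex there, not concave. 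This same example shows that the derivative identity you assert in Step 2 cannot hold for all directions $f$ simultaneously: at $t=0$ there are two distinct min-max realizers (one equator on each sphere) and the one-sided derivatives pick out different ones. No single varifold $V_{p}$ gives the correct directional derivative in every direction, so the convex-analytic passage to the limit in Step 3 has nothing to act on.

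This is exactly the difficulty the paper confronts. Rather than relying on a convexity/concavity structure (which does not exist here), the paper works with finitely many conformal directions at a time, perturbs the finite-parameter family $t\mapsto g'(t)$ so that it is transverse to the projections $\Pi_{i}$ coming from the Structure Theorem, and then uses Rademacher's theorem together with a Lebesgue-density argument to obtain, on a full-measure set of parameters, a \emph{single} realizer $f_{p}(t)$ for which the directional derivative formula holds in all coordinate directions (Proposition \ref{Perturbation} and Lemma \ref{lcountable}). The substitute for your ``superdifferentials converge'' step is Lemma \ref{gradient} from \cite{MNS}, which extracts from the uniform smallness of the oscillation of $f_{2}$ a point where a convex combination of gradients is small; this is what produces the approximate equidistribution identity \eqref{EqP}. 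Finally, the last sentence of your Step 3 is too quick: even if the varifolds $V_{p}$ equidistribute, listing their embedded components with multiplicity does not automatically yield a single sequence $\{\gamma_{i}\}$ with the stated Ces\`aro limit; a separate diagonalization argument (carried out in \cite[pp.~437--439]{MNS} and invoked at the end of Section \ref{proof main thm}) is needed for that.
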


In order to simplify the exposition, we consider integrals of $C^{\infty}$ functions instead of the  more general traces of $2$-tensors discussed in \cite{MNS}. Next we proceed to describe the intuition behind the proof, the technical issues which appear when one tries to carry on that intuition and how to sort them.

Let $g$ be a Riemannian metric on $M$. We want to do a very small perturbation of $g$ to obtain a new metric $\hat{g}$ which admits a sequence of equidistributed stationary geodesic networks. Let $f:M\to\mathbb{R}$ be a smooth function. Consider a conformal perturbation $\hat{g}:(-\delta,\delta)\to\mathcal{M}^{\infty}$ (for some $\delta>0$ small) defined as
\begin{equation*}
    \hat{g}(t)=e^{2tf}g.
\end{equation*}
By \cite[Lemma~3.4]{LS} the normalized $p$-widths $t\mapsto p^{-\frac{n-1}{n}}\omega_{p}^{1}(M,\hat{g}(t))$ are uniformly locally Lipschitz. This combined with the Weyl Law (recall that we assume it holds) implies that the sequence of functions $h_{p}:(-\delta,\delta)\to \mathbb{R}$
\begin{equation*}
    h_{p}(t)=\frac{p^{-\frac{n-1}{n}}\omega_{p}^{1}(M,\hat{g}(t))}{\Vol(M,\hat{g}(t))^{\frac{1}{n}}}
\end{equation*}
converges uniformly to the constant  $ \alpha(n,1)$. Considering
\begin{equation*}
    \tilde{h}_{p}(t)=\log(h_{p}(t))=-\frac{n-1}{n}\log(p)+\log(\omega_{p}(M,\hat{g}(t)))-\frac{1}{n}\log(\Vol(M,\hat{g}(t)))
\end{equation*}
we have that $\tilde{h}_{p}$ converges uniformly to the constant $\log(\alpha(n,1))$. On the other hand, Almgren showed that there is a correspondence between $1$-widths and the volumes of stationary varifolds (see \cite{FA62}, \cite{FA65}, 
\cite{CC92}, \cite{NR04}, \cite{JP73}, \cite{JP81}) such that for each $p\in\mathbb{N}$ and $t\in(-\delta,\delta)$ there exists a (possibly non unique) stationary geodesic network $\gamma_{p}(t)$ such that
\begin{equation}\label{Almgren widths}
    \length_{\hat{g}(t)}(\gamma_{p}(t))=\omega_{p}^{1}(\hat{g}(t)).
\end{equation}
Assume that the $\gamma_{p}(t)$'s can be chosen so that all of them are parametrized by the same graph $\Gamma$ and the maps $(-\delta,\delta)\to\Omega(\Gamma,M)$, $t\mapsto\gamma_{p}(t)$ are differentiable (this is a very strong assumption and doesn't necessarily hold, as the map $t\mapsto \omega_{p}^{1}(\hat{g}(t))$ may not be differentiable; a counterexample is shown below). In that case we can differentiate $\tilde{h}_{p}$ and obtain
\begin{align*}
    \frac{d}{dt}\tilde{h}_{p}(t) & = \frac{1}{\omega_{p}(M,\hat{g}(t))}\frac{d}{dt}\omega_{p}^{1}(M,\hat{g}(t))-\frac{1}{n\Vol(M,\hat{g}(t))}\frac{d}{dt}\Vol(M,\hat{g}(t))\\
    & = \frac{1}{L_{\hat{g}(t)}(\gamma_{p}(t))}\int_{\gamma_{p}(t)}f\dL_{\hat{g}(t)}-\frac{1}{n\Vol(M,\hat{g}(t))}\int_{M}nf\dVol_{\hat{g}(t)}\\
    & =\dashint_{\gamma_{p}(t)}f\dL_{\hat{g}(t)}-\dashint_{M}f\dVol_{\hat{g}(t)}.
\end{align*}

As $\{\tilde{h}_{p}\}_{p}$ converges uniformly to a constant, we could expect that the sequence $\{\tilde{h}_{p}'(t)\}_{p}$ converges to $0$ for some values of $t$. If that was the case, the sequence $\{\gamma_{p}(t)\}_{p}$ would verify the equidistribution formula for the function $f$ with respect to the metric $\hat{g}(t)$. Nevertheless, this does not have to be true, because of two reasons. The first one is that the uniform convergence of a sequence of functions to a constant does not imply convergence of the derivatives to $0$ at any point. Indeed, we can construct a sequence of zigzag functions which converges uniformly to $0$ but $h_{p}'(t)$ does not converge to $0$ for any $t$. The second one is that the differentiability of $t\mapsto\gamma_{p}(t)$ could fail, a counterexample is shown in the next paragraph. And even if that reasoning was true and such $t$ existed, the sequence $\{\gamma_{p}(t)\}_{p}$ constructed would only give an equidistribution formula for the function $f$ (which is used to construct the sequence) instead of for all $C^{\infty}$ functions at the same time; and with respect to a metric $\hat{g}(t)$ which could also vary with $f$.

An example when $t\mapsto\omega_{p}^{1}(\hat{g}(t))$ is not differentiable is the following. Let us consider a dumbbell metric $g$ on $S^{2}$ obtained by constructing a connected sum of two identical round $2$-spheres $S^{2}_{1}$ and $S^{2}_{2}$ of radius $1$ by a thin neck. Define a $1$-parameter family of metrics $\{\hat{g}(t)\}_{t\in(-1,1)}$ such that $\hat{g}(t)=(1+t)^{2}g$ along $S^{2}_{1}$, $\hat{g}(t)=(1-t)^{2}g$ along $S^{2}_{2}$ (interpolating along the neck so that it is still very thin). It is clear than for $t\geq 0$, the $1$-width is realized by a great circle in $S^{2}_{1}$ with length $1+t$, and for $t\leq 0$ it is realized by a great circle in $S^{2}_{2}$ of length $1-t$. Therefore

\[\omega_{1}^{1}(\hat{g}(t))=
\begin{cases}
1-t & \text{if }t\leq 0\\
1+t & \text{if }t\geq 0
\end{cases}
\]
and hence it is not differentiable at $0$.

To fix the previous issue (differentiability of $\omega^{1}_{p}(g(t))$), we prove Proposition \ref{Perturbation} which is a version for stationary geodesic networks of \cite[Lemma~2]{MNS}. Regarding the convergence of $h_{p}'(t)$ to $0$ for certain values of $t$, we use Lemma \ref{gradient} which is exactly \cite[Lemma~3]{MNS}. To obtain a sequence of stationary geodesic networks that verifies the equidistribution formula for all $C^{\infty}$ functions (and not only for a particular one as above), we carry on a construction described in Section \ref{proof main thm} using certain stationary geodesic nets which realize the $p$-widths in a similar way as the $\gamma_{p}(t)$'s above. The key idea here is that the integral of any $C^{\infty}$ function $f$ over $M$ can be approximated by Riemann sums along small regions with piecewise smooth boundary where $f$ is almost constant. Therefore, if we have an equidistribution formula for the characteristic functions of those regions (or some suitable smooth approximations), then we will be able to deduce it for an arbitrary $f\in C^{\infty}(M,\mathbb{R})$. The advantage of doing this is that we reduce the problem to a countable family of functions. This argument is also inspired by \cite{MNS}.

The paper is structured as follows. In Section \ref{geodesic networks}, we introduce the set up and necessary preliminaries. In Section \ref{Section Jacobi}, we define the Jacobi Operator along a stationary geodesic net and show that it has all the nice properties that an elliptic operator has (mainly, admitting an orthonormal basis of eigenfunctions and therefore having a min-max characterization for its eigenvalues). This is crucial to prove Proposition \ref{Perturbation nondeg}. In Section \ref{two technical propositions}, the technical propositions necessary to prove Theorem \ref{thm3} are discussed. In Section \ref{proof main thm} we prove Theorem \ref{thm3} and get Theorem \ref{thm2} as a corollary using the Weyl law for $1$ cycles in $3$ manifolds proved in \cite{GL22}. In Section \ref{closed geodesics}, we use the Weyl law from \cite{LMN} and the proof of Theorem \ref{thm3} combined with the work of Chodosh and Mantoulidis in \cite{Chodosh} (where it is proved that the $p$-widths on a surface are realized by finite unions of closed geodesics) to prove Theorem \ref{thm1}.

\begin{remark}\label{rkrohil}
Rohil Prasad pointed out that an alternative proof of Theorem \ref{thm1} could be obtained using the methods of Irie in \cite{Irie}. Given a closed Riemannian $2$-manifold $(M,g)$, its unit cotangent bundle $U^{*}_{g}M$ is a closed $3$-manifold equipped with a natural contact structure induced by the contact form $\lambda_{g}$ which is the restriction of the Liouville form $\lambda$ on $T^{*}M$ to $U^{*}_{g}M$. It is a well known fact that the Reeb vector field associated to $\lambda_{g}$ generates the geodesic flow of $(M,g)$. Additionally, given a function $f:M\to\mathbb{R}$, the Riemannian metric $g'=e^{f}g$ corresponds to the conformal perturbation $\lambda_{g'}=e^{\frac{f\circ\pi}{2}}\lambda_{g}$ of the contact form in $U^{*}_{g}M$ (here $\pi:U^{*}_{g}M\to M$ is the projection map); and both $\lambda_{g}$ and $\lambda_{g'}$ are compatible with the same contact structure on $U^{*}_{g}M$. Thus one would like to apply \cite[Corollary~1.4]{Irie} to $U^{*}_{g}M$ with the contact structure induced by $\lambda_{g}$. However, that result is about generic perturbations of the contact form of the type $e^{\tilde{f}}\lambda_{g}$, where $\tilde{f}:U^{*}_{g}M\to\mathbb{R}$ and we only want to consider perturbations $\tilde{f}=f\circ\pi$ which are liftings to $U^{*}_{g}M$ of maps $f:M\to\mathbb{R}$ so some work should be done here in order to apply Irie's result in our setting. This issue was pointed out in \cite[Remark~2.3]{Chen}, where a similar problem is studied for Finsler metrics and a solution is given for that class of metrics. Additionally, Irie's theorem would give us an equidistributed sequence for a generic conformal perturbation of each metric $g$. This immediately implies that for a dense set of Riemannian metrics such an equidistribution result holds, but some additional arguments are needed to prove it for a Baire-generic metric. It is important to point out that the result in \cite{Irie} uses the ideas of \cite{MNS} but in the different setting of contact geometry, applying results of Embedded Contact Homology with the purpose of finding closed orbits of the Reeb vector field; while in \cite{MNS} Almgren-Pitts theory is used to find closed minimal surfaces.
\end{remark}

\vspace{0.2in}

\textbf{Aknowledgements.} We are very grateful to Yevgeny Liokumovich for suggesting this problem and for his valuable guidance and support while we were working on it. We are deeply thankful to the referee for reading the article very carefully and for their great feedback, particularly for pointing out a gap in the proof of Proposition \ref{PropP} which was filled by proving Proposition \ref{Perturbation nondeg}. We also want to thank Rohil Prasad for his suggestion about using the approach mentioned in Remark \ref{rkrohil} to get an alternative proof of Theorem \ref{thm1} and for the valuable discussion derived from there. We are thankful to Wenkui Du as well, because of his comments on the preliminary version of this work. Bruno Staffa was partially supported by Discovery Grant RGPIN-2019-06912.

\section{Preliminaries}\label{geodesic networks}
\begin{definition}[Weighted Multigraph]
A weighted multigraph is a graph $\Gamma = (\mathscr{E}, \mathscr{V}, \{n(E)\}_{E \in \mathscr{E}})$ consisting of a set of edges $\mathscr{E}$, a set of vertices $\mathscr{V}$ and a multiplicity $n(E) \in \mathbb{N}$ assigned to each edge $E \in \mathscr{E}$. A weighted multigraph is \textbf{good} if it is connected and either it is a closed loop with mutiplicity or each vertex $v \in \mathscr{V}$ has at least three different incoming edges (here loop edges $E$ at $v$ count twice as an incoming edge at $v$, see \cite{Staffa} for a more detailed discussion). In the later case we say $\Gamma$ is \textbf{good*}.
\end{definition}

\begin{definition}
Given a weighted multigraph $(\mathscr{E},\mathscr{V},\{n(E)\}_{E\in\mathscr{E}})$, we identify each edge $E\in\mathscr{E}$ with the interval $[0,1]$ and we denote $\pi=\pi_{E}:\{0,1\}\to\mathscr{V}$ the map sending $i\in\{0,1\}$ to the vertex $v\in \mathscr{V}$ under the identification $E\cong[0,1]$.
\end{definition}

\begin{definition}[$\Gamma$-net]
A $\Gamma$-net $\gamma$ on $M$ is a continuous map $\gamma: \Gamma \rightarrow M$ which is a $C^2$-immersion restricted to the edges of $\Gamma$. We will denote $\Omega(\Gamma, M)$ the space of $\Gamma$-nets on $M$. It has a natural Banach manifold structure as a subspace of $\prod_{E\in\mathscr{E}} C^{2}(E,M)$ (see \cite{Staffa}). 
\end{definition}

\begin{definition}
We say that two $\Gamma$-nets $\gamma_{1}$ and $\gamma_{2}$ are equivalent if for every edge $E$ of $\Gamma$ the map $\gamma_{1}|_{E}$ is a $C^{2}$ reparametrization of $\gamma_{2}|_{E}$ fixing the endpoints. This defines an equivalence relation $\sim$ in $\Omega(\Gamma,M)$. We denote $\hat{\Omega}(\Gamma,M)=\Omega(\Gamma,M)/\sim$ the quotient space. Given $\gamma\in\hat{\Omega}(\Gamma,M)$ we will often denote a representative of the equivalence class $\gamma$ also by $\gamma$, and regard different representatives as different parametrizations of the geometric object $\gamma\in\hat{\Omega}(\Gamma,M)$.
\end{definition}

\begin{notation}
Given a $\Gamma$-net $\gamma$ and an edge $E\in\mathscr{E}$, we denote $\gamma_{E}$ the restriction of $\gamma$ to $E$. We also define $\gamma_{E}(0):=\gamma_{E}(\pi_{E}(0))$ and $\gamma_{E}(1):=\gamma_{E}(\pi_{E}(1))$.
\end{notation}

\begin{notation}
Given $1\leq q\leq\infty$, let us denote $\mathcal{M}^{q}$ the set of $C^{q}$ Riemannian metrics on $M$.
\end{notation}

\begin{definition}
Let $\gamma\in\hat{\Omega}(\Gamma,M)$ and let $h$ be a continuous function defined in $\Image(\gamma)\subseteq M$. Given a metric $g\in\mathcal{M}^{q}$ we define
\begin{equation*}
    \int_{\gamma}h\dL_{g} = \sum_{E \in \mathscr{E}}n(E)\int_{E}h\circ \gamma(u)\sqrt{g_{\gamma(u)}(\dot{\gamma}(u),\dot{\gamma}(u))}du.
\end{equation*}
Observe that the right hand side is independent of the parametrization we choose and therefore $\int_{\gamma}h\dL_{g}$ is well defined.
\end{definition}

\begin{definition}[$g$-Length]
Given $g \in \mathcal{M}^{q}$ and $\gamma \in \hat{\Omega}(\Gamma, M)$, we define the $g$-length of $\gamma$ by 
\begin{equation*}
    \length_g(\gamma) = \int_\Gamma 1\dL_{g}=\sum_{E\in\mathscr{E}}n(E)\int_{E}\sqrt{g_{\gamma(u)}(\dot{\gamma}(u),\dot{\gamma}(u))}du.
\end{equation*}
\end{definition}

\begin{definition}[Stationary Geodesic Network]
We say that $\gamma \in \Omega(\Gamma, M)$ is a stationary geodesic network with respect to a metric $g \in \mathcal{M}^{q}$ ($q\geq 2$) if it is a critical point of the length functional $\length_g:\Omega(\Gamma, M) \rightarrow \mathbb{R}$. This means that given any smooth one parameter family $\tilde{\gamma}:(-\delta,\delta)\to\Omega(\Gamma,M)$ with $\tilde{\gamma}(0)=\gamma$ we have
\begin{equation*}
    \frac{d}{ds}\bigg|_{s=0}\length_{g}(\tilde{\gamma}(s))=0.
\end{equation*}
Assuming that the edges of $\gamma$ are parametrized by constant speed, if $X(t)=\frac{\partial\tilde{\gamma}}{\partial s}(0,t)$ (here we regard $\tilde{\gamma}:(-\delta,\delta)\times\Gamma\to M$) then
\begin{equation}\label{First variation formula}
\frac{d}{ds}\bigg|_{s=0}\length_{g}(\tilde{\gamma}(s))=-\sum_{E\in\mathscr{E}}\frac{n(E)}{l(E)}\int_{E}\langle\ddot{\gamma}(t),X(t)\rangle_{g}dt+\sum_{v\in\mathscr{V}}\langle V_{v}(\gamma),X(v)\rangle_{g}
\end{equation}
where $l(E)=\length_{g}(\gamma_{E})$ and
\begin{equation*}
    V_{v}(\gamma)=\sum_{(E,i):\pi_{E}(i)=v}(-1)^{i+1}n(E)\frac{\dot{\gamma}_{E}(i)}{|\dot{\gamma}_{E}(i)|}.
\end{equation*}
Equation (\ref{First variation formula}) is called the First Variation Formula and was computed in \cite[Section~1]{Staffa}. It implies that $\gamma:\Gamma\to M$ is stationary with respect to $g$ if and only if each edge is mapped to a geodesic segment in $(M,g)$ and the stability condition at the vertices $V_{v}(\gamma)=0$ is verified. The latter means that for each $v\in\mathscr{V}$, the sum of the inward pointing unit tangent vectors to each edge at $v$ is $0$.
\end{definition}

\begin{definition}
We say that $\gamma\in\hat{\Omega}(\Gamma,M)$ is a stationary geodesic network if every representative $\tilde{\gamma}\in\Omega(\Gamma,M)$ of $\gamma$ is a stationary geodesic network.
\end{definition}

\begin{definition}
We denote $C^{2}(\gamma)$ the space of continuous vector fields along $\gamma$ whose restriction to each edge is of class $C^{2}$.
\end{definition}

\begin{remark}
If $g\in\mathcal{M}^{q}$, $q\geq 2$ and $\gamma\in\Omega(\Gamma,M)$ is stationary with respect to $g$ then by the regularity of the solutions of an ODE, $\gamma_{E}$ is of class $C^{q}$ for every $E\in\mathscr{E}$. This is why we only ask $C^{2}$ regularity to $\Gamma$-nets and vector fields along them.
\end{remark}

Assume $\gamma\in\Omega(\Gamma,M)$ is a stationary geodesic net with respect to a $C^{q}$ metric with $q\geq 3$ (so that the Riemann curvature tensor is of class $C^{1}$). Let $\tilde{\gamma}:(-\delta,\delta)^{2}\to\Omega(\Gamma,M)$ be a smooth $2$-parameter family of $\Gamma$-nets with $\tilde{\gamma}(0,0)=\gamma$. Let $X(t)=\frac{\partial \tilde{\gamma}}{\partial x}(0,0,t)$ and $Y(t)=\frac{\partial\tilde{\gamma}}{\partial s}(0,0,t)$. We define the Hessian $\Hess_{\gamma}\length_{g}:C^{2}(\gamma)\times C^{2}(\gamma)\to\mathbb{R}$ of the length functional at $\gamma$ as the bilinear form
\begin{equation*}
    \Hess_{\gamma}\length_{g}(X,Y)=\frac{\partial^{2}}{\partial x\partial s}\bigg|_{(0,0)}\length_{g}(\tilde{\gamma}(x,s)).
\end{equation*}
In \cite[Section~2]{Staffa} it was shown that $\Hess_{\gamma}\length_{g}$ is well defined (i.e. it does not depend on which two parameter variation $\tilde{\gamma}$ with directional derivatives $X$ and $Y$ we choose) and in fact it holds
\begin{equation}\label{Second Variation Formula}
    \Hess_{\gamma}\length_{g}(X,Y)=\sum_{E\in\mathscr{E}}\int_{E}\langle A_{E}(X)(t),Y(t)\rangle_{g}dt+\sum_{v\in\mathscr{V}}\langle B_{v}(X),Y(v)\rangle_{g}
\end{equation}
where
\begin{align*}
    A_{E}(X) & =-\frac{n(E)}{l(E)}[\ddot{X}_{E}^{\perp}+R(\dot{\gamma},X_{E}^{\perp})\dot{\gamma}]\\
    B_{v}(X) & =\sum_{(E,i):\pi_{E}(i)=v}(-1)^{i+1}\frac{n(E)}{l(E)}\dot{X}_{E}^{\perp}(i)
\end{align*}
being $X_{E}$ the restriction of the vector field $X$ to the edge $E$ and $X_{E}^{\perp}$ the component of $X_{E}$ orthogonal to $\gamma_{E}$. Observe that $A_{E}$ is (up to a positive constant) the Jacobi operator along $\gamma_{E}$. Equation (\ref{Second Variation Formula}) is the Second Variation Formula.

\begin{definition}
We say that a vector field $J\in C^{2}(\gamma)$ is Jacobi if it is a null vector of $\Hess_{\gamma}\length_{g}$, i.e. if $\Hess_{\gamma}\length_{g}(J,X)=0$ for every $X\in C^{2}(\gamma)$. By the Second Variation Formula, $J$ is Jacobi along $\gamma$ if and only if $A_{E}(J)=0$ for every $E\in\mathscr{E}$ and $B_{v}(J)=0$ for every $v\in\mathscr{V}$.
\end{definition}

\begin{definition}
A vector field $X\in C^{2}(\gamma)$ is said to be parallel if $X_{E}$ is parallel along $\gamma_{E}$ for every $E\in\mathscr{E}$.
\end{definition}

\begin{remark}
Observe that every parallel vector field $J$ along $\gamma$ is Jacobi.
\end{remark}

\begin{definition}
A stationary geodesic net $\gamma:\Gamma\to (M,g)$ is nondegenerate if every Jacobi field along $\gamma$ is parallel.
\end{definition}

\begin{remark}\label{unionofregular}
In \cite[Lemma~2.5]{LS}, it was shown that every stationary geodesic network with respect to a metric $g$ can be represented by a map $\gamma:\Gamma\to M$, where $\Gamma=\bigcup_{i=1}^{P}\Gamma_{i}$ is the finite union of the good weighted multigraphs $\{\Gamma_{i}\}_{1\leq i\leq P}$ and $\gamma|_{\Gamma_{i}}$ is an embedded stationary geodesic network for each $1\leq i\leq P$ (moreover, the map $\gamma:\Gamma\to M$ is a topological embedding).
\end{remark}

\begin{definition}\label{nondegcomponents}
Given a stationary geodesic network $\gamma:\Gamma\to (M,g)$, we say that its connected components are nondegenerate if
\begin{enumerate}
    \item We can express $\Gamma=\bigcup_{i=1}^{P}\Gamma_{i}$ as a disjoint union of good weighted multigraphs.
    \item $\gamma|_{\Gamma_{i}}$ is an embedded nondegenerate stationary geodesic network for every $1\leq i\leq P$.
\end{enumerate}
\end{definition}

\begin{definition}
An almost embedded closed geodesic in a Riemannian manifold $(M,g)$ is a map $\gamma:S^{1}\to (M,g)$ such that
\begin{enumerate}
    \item $\gamma$ is geodesic (i.e. $\ddot{\gamma}(t)=0$ for every $t\in S^{1}$).
    \item $\gamma$ is an immersion (i.e. $\dot{\gamma}(t)\neq 0$ for every $t\in S^{1}$).
    \item All self-intersections of $\gamma$ are transverse, which means that for every $s,t\in S^{1}$ such that $\gamma(s)=\gamma(t)$ and $s\neq t$, the velocities $\dot{\gamma}(s)$ and $\dot{\gamma}(t)$ are not colinear.
\end{enumerate}
This terminology is inspired in \cite[Definition~2.2]{WhiteCinfty}, where Brian White extends his Bumpy Metrics Theorem to almost embedded minimal surfaces.
\end{definition}

\begin{notation}
Given a symmetric $2$-tensor $T$, a metric $g\in\mathcal{M}^{q}$, a stationary geodesic network $\gamma:\Gamma\to M$ on $(M,g)$ and $t\in \Gamma$, we denote
\begin{equation*}
    \trace_{\gamma,g}T(t)=T(\frac{\dot{\gamma}(t)}{|\dot{\gamma}(t)|_{g}},\frac{\dot{\gamma}(t)}{|\dot{\gamma}(t)|_{g}})
\end{equation*}
which is the trace of the tensor $T$ along $\gamma$ with respect to the metric $g$.
\end{notation}

\begin{definition}[Average integral along $\gamma$]
Let $\Gamma$ be a weighted multigraph. Given $\gamma \in \Omega(\Gamma, M)$, a metric $g \in \mathcal{M}^{q}$ and a continuous function $h$ defined in $\Image(\gamma)$, we define the average integral of $h$ with respect to metric $g$ as
\begin{align*}
    \dashint_{\gamma} h \dL_{g} 
    &:= \frac{1}{\length_g(\gamma)}\int_{\gamma}h \dL_{g}.
\end{align*}
\end{definition}

\section{The Jacobi Operator}\label{Section Jacobi}

In this section we will study some properties of the Jacobi operator of an embedded stationary geodesic network $\gamma:\Gamma\to (M,g)$, where $\Gamma$ is a good weighted multigraph and $g\in\mathcal{M}^{q}$, $q\geq 3$. We will focus on the case when $\Gamma$ is good* (i.e. every vertex has at least three different incoming edges), because when $\Gamma$ is a loop with multiplicity what we get is the Jacobi operator along an embedded closed geodesic acting on normal vector fields, which is known to be elliptic; and hence it has all the nice properties that we will describe below. We first introduce some notation. Let
\begin{align*}
    C^{2}(\gamma) & =\{X\text{ continuous vector field along }\gamma:X_{E}\text{ is }C^{2}\text{ }\forall E\in\mathscr{E}\}\\
    C^{2}(\gamma)^{\parallel} & =\{X\in C^{2}(\gamma):X(t)\in\langle\dot{\gamma}(t)\rangle\text{ }\forall t\in\Gamma\}\\
    C^{2}_{0}(\gamma)^{\perp} & = \{X\in C^{2}(\gamma):X(t)\perp\dot{\gamma}(t)\text{ }\forall t\in\Gamma\setminus\mathscr{V}\text{ and }X(v)=0\text{ }\forall v\in\mathscr{V}\}\\
    C^{2}(E)^{\perp} &=\{X\in C^{2}(E):X(t)\perp\dot{\gamma}(t)\text{ }\forall E\in\mathscr{E}\}\\
    C^{2}(\mathscr{E})^{\perp} & =\prod_{E\in\mathscr{E}}C^{2}(E)^{\perp}.
\end{align*}
Observe that as $\Gamma$ is good*, if $X\in C^{2}(\gamma)^{\parallel}$ then $X(v)=0$ for every $v\in\mathscr{V}$. Denote
\begin{equation*}
    T\mathscr{V}=\prod_{v\in\mathscr{V}}T_{\gamma(v)}M.
\end{equation*}
By the second variation formula (\ref{Second Variation Formula}), we can define the Jacobi operator $L:C^{2}(\gamma)\to C^{0}(\mathscr{E})^{\perp}\times T\mathscr{V}$ as
\begin{equation}\label{Jacobi operator}
    L(J)=((-\frac{n(E)}{l(E)}(\ddot{J}_{E}^{\perp}+R(\dot{\gamma},J_{E}^{\perp})\dot{\gamma}))_{E\in\mathscr{E}},(B_{v}(J))_{v\in\mathscr{V}}).
\end{equation}
We know that each $X\in C^{2}(\gamma)^{\parallel}$ is Jacobi (i.e. it verifies $L(J)=0$). We want to construct a complement of $C^{2}(\gamma)^{\parallel}$ in $C^{2}(\gamma)$, and show that when we restrict $L$ to that complement it behaves like an elliptic operator (this complement will play the role of the space of normal Jacobi fields along a minimal submanifold in the smooth case, when it is known that the stability operator is elliptic).


To do this, we will need to define a finite dimensional subspace $S^{2}(\gamma)\subseteq C^{2}(\gamma)$ such that the evaluation map $\eval:S^{2}(\gamma)\to T\mathscr{V}$, $J\mapsto (J(v))_{v\in\mathscr{V}}$ is a linear isomorphism. This can be done by taking a basis $\mathcal{B}_{v}$ of $T_{\gamma(v)}M$ for each $v\in\mathscr{V}$, and for each pair $(v,w)$ with $v\in\mathscr{V}$ and $w\in\mathcal{B}_{v}$ defining a vector field $J_{(v,w)}\in C^{2}(\gamma)$ such that $J_{(v,w)}(v)=w$ and $J_{(v,w)}(v')=0$ for every $v'\neq v$. Then we can define $S^{2}(\gamma)=\langle J_{(v,w)}:v\in\mathscr{V},w\in\mathcal{B}_{v}\rangle$. Of course the choice of $S^{2}(\gamma)$ is not canonical, but we fix one choice and work with it for the rest of the section (it will be deduced from the arguments below that the results that we prove hold independently of the choice of $S^{2}(\gamma)$). It is clear that
\begin{equation*}
    C^{2}(\gamma) =C^{2}(\gamma)^{\parallel}\oplus C^{2}_{0}(\gamma)^{\perp}\oplus S^{2}(\gamma).
\end{equation*}
Denote $C^{2}(\gamma)^{C}=C^{2}_{0}(\gamma)^{\perp}\oplus S^{2}(\gamma)$ which is a complement of the space of parallel vector fields along $\gamma$. Same as in the theory of elliptic operators, we can extend the Jacobi operator to Sobolev spaces of vector fields along $\gamma$ once we have a suitable definition of them. Denote
\begin{align*}
    H^{2}_{0}(E) & =\{X\text{ normal vector field of class }H^{2}_{0}\text{ along }E\}\\
    H^{2}_{0}(\gamma) & =\prod_{E\in\mathscr{E}}H^{2}_{0}(E)\\
    H^{2}(\gamma) & = H^{2}_{0}(\gamma)\oplus S^{2}(\gamma)\\
    L^{2}(E) & =\{X\text{ normal vector field of class }L^{2}\text{ along }E\}\\
    L^{2}(\mathscr{E}) & =\prod_{E\in\mathscr{E}} L^{2}(E)\\
    L^{2}(\gamma) & = L^{2}(\mathscr{E})\oplus T\mathscr{V}.
\end{align*}
Notice that $H^{2}(\gamma)$ is the $H^{2}$-version of $C^{2}(\gamma)^{C}$ and will be the domain of the Jacobi operator we will work with (as that operator vanishes on $C^{2}(\gamma)^{\parallel}$). The previous spaces are defined in analogy with the spaces of $C^{2}$, $H^{2}$ and $L^{2}$ normal vector fields along a smooth closed submanifold which appear when studying the ellipticity of its Jacobi operator. The space $L^{2}(\gamma)$ is a Hilbert space with the inner product
\begin{equation*}
    \langle ((X_{E})_{E},(u_{v})_{v}),((Y_{E})_{E},(w_{v})_{v})\rangle=\sum_{E\in\mathscr{E}}\int_{E}\langle X_{E}(t),Y_{E}(t)\rangle_{g}dt+\sum_{v\in\mathscr{V}}\langle u_{v},w_{v}\rangle_{g}
\end{equation*}
and we have a monomorphism $\iota: H^{2}(\gamma)\to L^{2}(\gamma)$ with dense image given by
\begin{equation*}
    \iota(J)=((J_{E})^{\perp}_{E\in\mathscr{E}}, (J(v))_{v\in\mathscr{V}})
\end{equation*}
which allow us to write the Hessian $\Hess_{\gamma}\length_{g}:H^{2}(\gamma)\times H^{2}(\gamma)\to\mathbb{R}$ as
\begin{equation*}
    \Hess_{\gamma}\length_{g}(J,\tilde{J})=\langle L(J),\iota(\tilde{J})\rangle
\end{equation*}
where $\langle ,\rangle$ is the inner product in $L^{2}(\gamma)$. Here we considered $L:H^{2}(\gamma)\to L^{2}(\gamma)$ given by (\ref{Jacobi operator}) which is a bounded linear operator.
As in the smooth case, we can also regard $L$ as an unbounded operator $L:L^{2}(\gamma)\to L^{2}(\gamma)$ whose domain is the dense linear subspace $H^{2}(\gamma)$. 
We would therefore expect that for sufficiently big $\lambda\in\mathbb{R}$ the operator $L+\lambda\iota:L^{2}(\gamma)\to L^{2}(\gamma)$ has a compact inverse, and from that get an orthonormal basis of $L^{2}(\gamma)$ consisting of eigenvectors of $L$. This indeed holds, as it is shown in the following proposition.

\begin{proposition}
For every $\lambda\in\mathbb{R}$, the operator $L-\lambda \iota:H^{2}(\gamma)\to L^{2}(\gamma)$ defined as $(L-\lambda\iota)(J)=L(J)-\lambda\iota(J)$ is Fredholm of index $0$. The spectrum of $L$ consists of an increasing sequence of eigenvalues $\lambda_{1}\leq \lambda_{2}\leq...$ with $\lim_{n\to\infty}\lambda_{n}=+\infty$ (i.e. $L-\lambda\iota$ has nontrivial kernel if and only if $\lambda\in\{\lambda_{n}\}_{n\in\mathbb{N}}$ and has a continuous inverse $(L-\lambda\iota)^{-1}:L^{2}(\gamma)\to H^{2}(\gamma)$ otherwise). In addition, there exists sequence $\{J_{n}\}_{n\in\mathbb{N}}$ in $H^{2}(\gamma)$ such that $\{\iota(J_{n})\}_{n\in\mathbb{N}}$ is an orthonormal basis of $L^{2}(\gamma)$ and $L(J_{n})=\lambda_{n}\iota(J_{n})$ for each $n\in\mathbb{N}$. Therefore, we have the following min-max characterization of the eigenvalues of $L$
\begin{equation*}
    \lambda_{n}=\min_{W}\max_{J\in W\setminus\{0\}}\frac{\langle L(J),\iota(J)\rangle}{\langle \iota(J),\iota(J)\rangle}
\end{equation*}
where the minimum is taken over all $n$-dimensional subspaces $W\subseteq H^{2}(\gamma)$.
\end{proposition}

\begin{proof}
    Let $\lambda\in\mathbb{R}$. Then if $J\in H^{2}_{0}(\gamma)$ and $\tilde{J}\in S^{2}(\gamma)$,
    \begin{equation*}
        (L-\lambda\iota)(J+\tilde{J})=((L_{E}(J_{E})-\lambda J_{E})_{E}+(L_{E}(\tilde{J}_{E}^{\perp})-\lambda \tilde{J}_{E}^{\perp})_{E},(B_{v}(J+\tilde{J})-\lambda \tilde{J}(v))_{v})
    \end{equation*}
where $L_{E}:H^{2}_{0}(E)\to L^{2}(E)$ is (a constant multiple of) the Jacobi operator along $\gamma_{E}$ given by $J\mapsto-\frac{n(E)}{l(E)}(\ddot{J}+R(\dot{\gamma},J)\dot{\gamma})$. We know that each $L_{E}$ is elliptic, and therefore $L_{E}-\lambda$ is Fredholm of index $0$ for every $\lambda\in\mathbb{R}$. 
This implies that the product operator $\tilde{L}:H^{2}_{0}(\gamma)\to L^{2}(\mathscr{E})$, $\tilde{L}=(L_{E})_{E}$ verifies that $\tilde{L}-\lambda$ is Fredholm of index $0$ for every $\lambda\in\mathbb{R}$. Thus the fact that $L-\lambda\iota$ is always Fredholm of index $0$ can be deduced from the following lemma:

\begin{lemma}\label{Lemma Fredholm}
Let $E_{1}$, $E_{2}$, $\overline{E}_{1}$, $\overline{E}_{2}$ be Banach spaces with $\dim(E_{2})=\dim(\overline{E}_{2})<\infty$. Let $L:E_{1}\oplus E_{2}\to\overline{E}_{1}\oplus\overline{E}_{2}$ be a continuous linear map, and write $L(e_{1}.e_{2})=(L_{11}(e_{1})+L_{21}(e_{2}),L_{12}(e_{1})+L_{22}(e_{2}))$ with $L_{ij}:E_{i}\to\overline{E}_{j}$. Assume $L_{11}$ is Fredholm of index $0$. Then $L$ is Fredholm of index $0$.
\end{lemma}

\begin{proof}[Proof of Lemma \ref{Lemma Fredholm}]
    Let $\tilde{L}:E_{1}\oplus E_{2}\to \overline{E}_{1}\oplus \overline{E}_{2}$ be the operator $\tilde{L}(e_{1},e_{2})=(L_{11}(e_{1}),0)$. Because $L_{11}$ is Fredholm of index $0$ and $\dim(E_{2})=\dim(\overline{E}_{2})$, we see that $\tilde{L}$ is also Fredholm of index $0$. As $L=\tilde{L}+F$ with $F(e_{1},e_{2})=(L_{21}(e_{2}),L_{12}(e_{1})+L_{22}(e_{2}))$ compact because of the finite dimensionality of $E_{2}$ and $\overline{E}_{2}$, by \cite[Theorem~12-5.13]{Tsoy} we deduce that $L$ is also Fredholm of index $0$.
\end{proof}

Now we are going to show that the quadratic form $\Hess_{\gamma}\length_{g}:H^{2}(\gamma)\times H^{2}(\gamma)\to\mathbb{R}$ is bounded from below. We know
\begin{equation*}
    \Hess_{\gamma}\length_{g}(J,\tilde{J})=\sum_{E\in\mathscr{E}}\int_{E}\langle L_{E}(J_{E}^{\perp})(t),\tilde{J}_{E}^{\perp}(t)\rangle_{g}dt+\sum_{v\in\mathscr{V}}\langle B_{v}(J),\tilde{J}(v)\rangle_{g}.
\end{equation*}
Denote by $C:H^{2}(\gamma)\times H^{2}(\gamma)\to\mathbb{R}$ the form $C(J,\tilde{J})=\sum_{v\in\mathscr{V}}\langle B_{v}(J),\tilde{J}(v)\rangle_{g}$. $C$ is symmetric because so are $\Hess_{\gamma}\length_{g}$ and $L_{E}$ for each $E\in\mathscr{E}$. If we endow $S^{2}(\gamma)$ with the inner product
\begin{equation*}
    \langle J,\tilde{J}\rangle=\sum_{v\in\mathscr{V}}\langle J(v),\tilde{J}(v)\rangle_{g}
\end{equation*}
then as $\dim(S^{2}(\gamma))<\infty$, we can see that there exists some constant $\alpha>0$ such that
\begin{equation}\label{form bound 1}
    |C(J,J)|\leq\alpha\sum_{v\in\mathscr{V}}\langle J(v),J(v)\rangle_{g}
\end{equation}
for every $J\in S^{2}(\gamma)$. But then as $C$ vanishes on $H^{2}_{0}(\gamma)$, by its bilinearity and symmetry we can see that in fact (\ref{form bound 1}) is valid for every $J\in H^{2}(\gamma)$.

On the other hand, using that each $L_{E}$ is elliptic, for each $E\in\mathscr{E}$ there exists $\beta_{E}\in\mathbb{R}$ such that
\begin{equation}\label{form bound 2}
    \int_{E}\langle L_{E}(J^{\perp}_{E})(t),J_{E}^{\perp}(t)\rangle_{g}dt\geq\beta_{E}\int_{E}\langle J_{E}^{\perp}(t),J_{E}^{\perp}(t)\rangle_{g}dt.
\end{equation}
Thus if $\beta=\min\{\beta_{E}:E\in\mathscr{E}\}$ and $\gamma=\min\{\beta,-\alpha\}$, from (\ref{form bound 1}) and (\ref{form bound 2}) we deduce
\begin{equation*}
    \Hess_{\gamma}\length_{g}(J,J)\geq\beta\sum_{E\in\mathscr{E}}\int_{E}\langle J_{E}^{\perp}(t),J_{E}^{\perp}(t)\rangle_{g}dt-\alpha \sum_{v\in\mathscr{V}}\langle J(v),J(v)\rangle_{g}\geq \gamma\langle \iota(J),\iota(J)\rangle
\end{equation*}
which considering that $\Hess_{\gamma}\length_{g}(J,J)=\langle L(J),\iota(J)\rangle$ implies that for every $\lambda\in\mathbb{R}$ it holds
\begin{equation*}
    \langle (L+\lambda\iota)(J),\iota(J)\rangle\geq (\lambda+\gamma)\langle\iota(J),\iota(J)\rangle
\end{equation*}
and in particular if $\lambda>-\gamma$ implies that $L+\lambda\iota$  is a monomorphism. Because we also know that these operators are Fredholm of index $0$, by the Open Mapping Theorem we conclude that $L+\lambda\iota:H^{2}(\gamma)\to L^{2}(\gamma)$ is a continuous linear isomorphism for every $\lambda>-\gamma$.

Fix $\lambda>-\gamma$. We will now show that $\iota\circ(L+\lambda\iota)^{-1}:L^{2}(\gamma)\to L^{2}(\gamma)$ is compact. Let $(X_{n})_{n\in\mathbb{N}}$ be a bounded sequence in $L^{2}(\gamma)$ and define $(J^{n},\tilde{J}^{n})=(L+\lambda\iota)^{-1}(X_{n})$ with $J^{n}\in H^{2}_{0}(\gamma)$ and $\tilde{J}^{n}\in S^{2}(\gamma)$. As $(L+\lambda\iota)^{-1}$ is bounded, $(J^{n},\tilde{J}^{n})$ is a bounded sequence in $H^{2}(\gamma)$. Therefore, for each $E\in\mathscr{E}$ the sequence of normal vector fields $(J^{n}_{E})_{n\in\mathbb{N}}$ along $\gamma_{E}$ is bounded in $H^{2}_{0}(E)$ and therefore in $H^{1}_{0}(E)$. Hence, by the Rellich-Kondrachov Compactness Theorem we can find a subsequence $(n_{k})_{k\in\mathbb{N}}$ such that $(J^{n_{k}}_{E})_{k\in\mathbb{N}}$ converges in $L^{2}(E)$ for every $E\in\mathscr{E}$. On the other hand, using that $S^{2}(\gamma)$ is finite dimensional, we can extract a further subsequence $(n_{k_{l}})_{l}$ to have the additional property that $(\tilde{J}^{n_{k_{l}}})_{l\in\mathbb{N}}$ converges in $S^{2}(\gamma)$. This implies that the sequence of general term $\iota\circ(L+\lambda\iota)^{-1}(X_{n_{k_{l}}})=\iota(J^{n_{k_{l}}},\tilde{J}^{n_{k_{l}}})$ converges in $L^{2}(\gamma)$, and this completes the proof that $\iota\circ(L+\lambda\iota)^{-1}$ is compact.

The symmetry of $\Hess_{\gamma}\length_{g}(J,\tilde{J})=\langle L(J),\iota(\tilde{J})\rangle$ implies that $\iota\circ(L+\lambda\iota)^{-1}$ is self-adjoint, which together with its compactness implies the existence of an orthonormal basis $\{X_{n}\}_{n\in\mathbb{N}}$ of $L^{2}(\gamma)$ such that $\iota\circ(L+\lambda\iota)^{-1}X_{n}=\delta_{n}X_{n}$ for some decreasing sequence $\delta_{n}\to 0^{+}$ (because by our choice of $\lambda$, $\iota\circ(L+\lambda\iota)^{-1}\geq 0$). But we claim that $X\in L^{2}(\gamma)$ is an eigenvector of $\iota\circ(L+\lambda\iota)^{-1}$ of eigenvalue $\delta\in\mathbb{R}$ if and only if $X=\iota(J)$ for some $J\in H^{2}(\gamma)$ such that $L(J)=(\delta^{-1}-\lambda)\iota(J)$. This is because $\iota\circ(L+\lambda\iota)^{-1}(X)=\delta X$ if and only if there exists $J\in H^{2}(\gamma)$ with $\iota(J)=X$ which verifies any of the the following equivalent conditions:
\begin{align*}
    \iota\circ(L+\lambda\iota)^{-1}\circ\iota(J)& =\delta\iota(J)\\
    (L+\lambda\iota)^{-1}\circ\iota(J) & = \delta J\\
    \iota(J) & =\delta(L+\lambda\iota)(J)\\
    L(J) & =(\delta^{-1}-\lambda)\iota(J).
\end{align*}
From the previous, we conclude that if $\lambda_{n}:=\delta_{n}^{-1}-\lambda$ then $\spectrum(L)=\{\lambda_{n}\}_{n}$, $\lim_{n\to\infty}\lambda_{n}=+\infty$ and $L(J_{n})=\lambda_{n}\iota(J_{n})$. This implies the min-max theorem for the eigenvalues holds for $L$, which completes the proof.
\end{proof}

\section{Some auxiliary results}\label{two technical propositions}

\begin{proposition}\label{Perturbation}
Let $g:I^{N}\to\mathcal{M}^{q}$ be a smooth embedding, $N\in\mathbb{N}$, $I=(-1,1)$. If $q\geq N+3$, there exists an arbitrarily small perturbation in the $C^{\infty}$ topology $g':I^{N}\to\mathcal{M}^{q}$ such that there is a full measure subset $\mathcal{A}\subseteq I^{N}$ with the following properties: for any $p\in\mathbb{N}$ and any $t\in\mathcal{A}$, the function $s\mapsto\omega^{1}_{p}(g'(s))$ is differentiable at $t$, and there exists a (possibly disconnected) weighted multigraph $\Gamma$ and a stationary geodesic network $\gamma_{p}=\gamma_{p}(t):\Gamma\to (M,g'(t))$ such that the following two conditions hold
\begin{enumerate}
    \item $\omega_{p}^{1}(g'(t))=L_{g'(t)}(\gamma_{p}(t))$.
    \item $\frac{\partial}{\partial v}(\omega^{1}_{p}\circ g')\big|_{s=t}=\frac{1}{2}\int_{\gamma_{p}(t)}\trace_{\gamma_{p}(t),g'(t)}\frac{\partial g'}{\partial v}(t)\dL_{g'(t)}$.
\end{enumerate}
\end{proposition}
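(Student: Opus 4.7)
I would adapt the proof of \cite[Lemma~2]{MNS} to the $1$-cycle setting, substituting the structure theorem for stationary geodesic networks of \cite{Staffa} for the bumpy metrics theorem of White \cite{White} used in the hypersurface case. The strategy is to perturb the family $g$ so that generically the $p$-width realizers are nondegenerate, which forces enough smoothness on the $p$-width function to extract the first-variation formula via a squeeze argument at points of differentiability.

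First, I would apply a parametric form of the structure theorem from \cite{Staffa} to perturb $g$ to a nearby smooth family $g':I^{N}\to\mathcal{M}^{q}$, arbitrarily close in the $C^{\infty}$ topology, so that for almost every $t\in I^{N}$ and every length bound $L>0$, every stationary geodesic network in $(M,g'(t))$ of length at most $L$ has nondegenerate connected components in the sense of Definition \ref{nondegcomponents}. The regularity condition $q\geq N+3$ enters through a parametric Sard-Smale theorem applied to the Fredholm section whose zero set parametrizes stationary networks: one needs enough derivatives so that the projection of the resulting moduli space onto $I^{N}$ has a full-measure set of regular values. Next, \cite[Lemma~3.4]{LS} ensures $t\mapsto\omega_{p}^{1}(g'(t))$ is locally Lipschitz on $I^{N}$, so by Rademacher's theorem it is differentiable off a set of measure zero. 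Let $\mathcal{A}\subseteq I^{N}$ be the full-measure subset where both the nondegeneracy above holds and $s\mapsto\omega_{p}^{1}(g'(s))$ is differentiable for every $p\in\mathbb{N}$.

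For $t\in\mathcal{A}$ and $p\in\mathbb{N}$, Almgren-Pitts theory produces a stationary geodesic network $f_{p}(t):\Gamma\to(M,g'(t))$ with $\omega_{p}^{1}(g'(t))=\length_{g'(t)}(f_{p}(t))$; by Step 1 its components are nondegenerate, so the implicit function theorem applied to the stationarity equation yields a $C^{1}$ family $s\mapsto f_{p}(s)\in\Omega(\Gamma,M)$ on a neighborhood of $t$ with $f_{p}(s)$ stationary with respect to $g'(s)$. Setting $\phi(s):=\length_{g'(s)}(f_{p}(s))-\omega_{p}^{1}(g'(s))$, we will have $\phi\geq 0$ and $\phi(t)=0$, so that $t$ is a local minimum and, using the differentiability of $\omega_{p}^{1}\circ g'$ at $t$, the directional derivative of $\omega_{p}^{1}\circ g'$ at $t$ coincides with that of $s\mapsto\length_{g'(s)}(f_{p}(s))$. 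The latter is computed by the first variation of length under a metric perturbation: the intrinsic variation of $f_{p}$ vanishes by stationarity, leaving exactly $\frac{1}{2}\int_{f_{p}(t)}\trace_{f_{p}(t),g'(t)}\frac{\partial g'}{\partial v}(t)\dL_{g'(t)}$, which is condition (2).

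The main obstacle is justifying the inequality $\omega_{p}^{1}(g'(s))\leq\length_{g'(s)}(f_{p}(s))$ needed to ensure $\phi\geq 0$: the IFT continuation of a width-realizer at $t$ need not realize $\omega_{p}^{1}$ for nearby parameters, as the dumbbell example in the introduction shows precisely this kind of jump between competing realizers. To circumvent this I would argue, after choosing $f_{p}(t)$ carefully among the finitely many nondegenerate stationary networks of length $\omega_{p}^{1}(g'(t))$, that upper semicontinuity of the min-max characterization of $\omega_{p}^{1}$ together with the compactness of nondegenerate stationary networks of bounded length (guaranteed by the parametric genericity of Step 1) forces any would-be competitor to agree with the IFT branch near $t$; this is the delicate step that closes the squeeze and is the analog of the key estimate in \cite{MNS}.
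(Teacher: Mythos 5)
Your overall scaffolding (Sard--Smale transversality to the Fredholm projection from \cite{Staffa}/\cite{White}, Rademacher for differentiability of $s\mapsto\omega_{p}^{1}(g'(s))$, and Lemma \ref{Diff}-type first variation once one has a smooth branch of stationary networks) matches the paper, but the central step of your argument has a genuine gap. Your squeeze requires $\phi(s)=\length_{g'(s)}(f_{p}(s))-\omega_{p}^{1}(g'(s))\geq 0$ for all $s$ near $t$, i.e.\ that the length of the IFT continuation of one particular realizer dominates the width on a whole neighborhood. There is no such one-sided inequality: $\omega_{p}^{1}$ is a min-max value, not a minimum over a class containing the continued branch, and a stationary network close in length to the width can be strictly shorter than it. The dumbbell example is exactly a situation where the branch through the realizer at $t$ fails to control $\omega_{p}^{1}(g'(s))$ on one side, and your proposed fix (``upper semicontinuity of the min-max characterization plus compactness forces any competitor to agree with the IFT branch near $t$'') is not an argument: even at a point of differentiability, nothing a priori prevents the width from being realized, at parameters arbitrarily close to $t$, by networks of a different combinatorial type that are not on your branch. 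Closing this is precisely the content of the paper's proof, and it is done by a different mechanism, not by a squeeze.

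What the paper does instead: it stratifies the parameter cube as $I^{N}=\bigcup_{\Gamma,M}\mathcal{B}_{\Gamma,M}$, where $\mathcal{B}_{\Gamma,M}$ is the (closed, by Arzel\`a--Ascoli plus the quantitative bounds $F_{1}$, $F_{2}$, $d^{E}$, $d^{E,E'}$) set of parameters at which the width is realized by an embedded network of fixed combinatorial type $\Gamma$ with uniform $C^{3}$ and non-collapsing bounds $M$. At a Lebesgue density point $t$ of such a stratum one finds, for every unit direction $v$, a sequence $t_{m}(v)\to t$ inside the stratum with $\frac{t-t_{m}(v)}{|t-t_{m}(v)|}\to v$; since $\omega_{p}$ is Lipschitz and differentiable at $t$, the difference quotients along $t_{m}(v)$ converge to $\partial_{v}\omega_{p}(t)$, and since for large $m$ the realizers at $t_{m}(v)$ lie on the local IFT branch $\Xi$ through the limit realizer (bumpiness of $g'(t)$), these difference quotients are those of the smooth function $s\mapsto\length(g'(s),\Xi(g'(s)))$, to which Lemma \ref{Diff} applies. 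Note this only needs width-realization along a sequence in the stratum, never the neighborhood inequality you assume. Finally, the limit realizer may depend on $v$; the paper removes this dependence via Lemma \ref{lcountable} (countably many stationary nets for a bumpy metric, so some net serves a full basis of directions) together with linearity of directional derivatives at the point of differentiability --- an issue your proposal does not address but which is unavoidable once the squeeze is abandoned. To repair your write-up you would essentially have to import this density-point and stratification argument.
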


To prove the proposition, we will need to have a condition for a sequence of embedded stationary geodesic nets $\gamma_{n}:\Gamma\to (M,g_{n})$ converging to some $\gamma:\Gamma\to (M,g)$ that guarantees that $\gamma$ is also embedded. The condition we will work with can be expressed as a collection of lower and upper bounds of certain functionals defined for pairs $(g,\gamma)$ where $\gamma$ is stationary with respect to $g$. We proceed to describe those functionals.

The first one is
\begin{equation*}
    F_{1}(g,\gamma)=\min\{|\dot{\gamma}_{E}(t)|_{g}:t\in E,E\in\mathscr{E}\}.
\end{equation*}
A lower bound for this functional will imply that the limit net is an immersion along each edge.

Then we have a family of functionals $F_{2}^{(E_{1},i_{1}),(E_{2},i_{2})}$ defined for each pair \linebreak $((E_{1},i_{1}),(E_{2},i_{2}))\in(\mathscr{E}\times\{0,1\})^{2}$ such that $\pi_{E_{1}}(i_{1})=\pi_{E_{2}}(i_{2})$ (see Section \ref{geodesic networks} for the notation) as follows

\begin{equation*}
    F_{2}^{(E_{1},i_{1}),(E_{2},i_{2})}(g,\gamma)=(-1)^{i_{1}+i_{2}}\frac{\langle\dot{\gamma}_{E_{1}}(i_{1}),\dot{\gamma}_{E_{2}}(i_{2})\rangle_{g}}{|\dot{\gamma}_{E_{1}}(i_{1})|_{g}|\dot{\gamma}_{E_{2}}(i_{2})|_{g}}.
\end{equation*}
Notice that $(-1)^{i_{j}}\frac{\dot{\gamma}_{E_{j}}(i_{j})}{|\dot{\gamma}_{E_{j}}(i_{j})|_{g}}$ is the unit inward tangent vector to $\gamma$ at $v=\pi_{E_{j}}(i_{j})$ along $E_{j}$, $j=1,2$ (and observe that in case $E$ is a loop at $v$, there are two inward tangent vectors to $\gamma$ along $E$ at $v$ represented by the pairs $(E,0)$ and $(E,1)$). The condition $F_{2}^{(E_{1},i_{1}),(E_{2},i_{2})}(g_{n},\gamma_{n})\leq 1-\delta$ for some $\delta>0$ and for every possible choice $(E_{1},i_{1})\neq (E_{2},i_{2})$ with $\pi_{E_{1}}(i_{1})=\pi_{E_{2}}(i_{2})$ implies that the limit $(g,\gamma)$ has the property that given $v\in\mathscr{V}$, there exists an open neighborhood $U_{v}$ of $v$ in $\Gamma$ such that $\gamma:U_{v}\to \gamma(U_{v})$ is a homeomorphism. Explicitly,
\begin{equation*}
    U_{v}=\bigcup_{(E,i):\pi_{E}(i)=v}\{t\in E:|t-i|<\min\{\frac{\inj(g)}{\length_{g}(\gamma_{E})},\frac{1}{2}\}\}
\end{equation*}
where $\inj:\mathcal{M}^{q}\to\mathbb{R}_{>0}$, $g\mapsto \inj(g)$ is a continuous choice of the injectivity radius for each $C^q$ Riemannian metric $g$. This is because if we consider $U_{v}$ as a graph obtained by gluing at $v$ one edge for each pair $(E,i)\in\mathscr{E}\times\{0,1\}$ such that $\pi_{E}(i)=v$, this graph is mapped by $\gamma$ into a geodesic ball centered at $\gamma(v)$ of radius $\inj(g)$ and the image of each incoming edge at $v$ has a different inward tangent vector at $\gamma(v)$.

To ensure injectivity along the edges, we define for each edge $E\in\mathscr{E}$ a function
\begin{equation*}
    d^{E}_{(g,\gamma)}(t)=\min\{d_{g}(\gamma(t),\gamma(s)):s\in E, |t-s|\geq\frac{\inj(g)}{\length_{g}(\gamma_{E})}\}.
\end{equation*}
In case $\pi_{E}(0)=\pi_{E}(1)$, the distance $|s-t|$ between two points $s,t\in E$ is measured with respect of the length of $S^{1}=E/0\sim 1$.

To ensure that the images of different edges under $\gamma$ do not overlap, we define for each pair $E,E'\in\mathscr{E}$, $E\neq E'$ a function $d^{E,E'}_{(g,\gamma)}:E\to\mathbb{R}_{\geq 0}$ as
\begin{align*}
    d^{E,E'}_{(g,\gamma)}(t)= & \min\{d_{g}(\gamma(t),\gamma(s)):s\in E', |s-i|\geq\frac{\inj(g)}{\length_{g}(\gamma_{E'})}
    \text{ for each }i\in\{0,1\}\\
    & \text{ s.t. }\exists j\in\{0,1\}\text{ with }\pi_{E'}(i)=\pi_{E}(j)\text{ and }|t-j|\leq\frac{\inj(g)}{\length_{g}(\gamma_{E})}\}.
\end{align*}

Let us fix an auxiliary embedding $\psi:M\to\mathbb{R}^{l}$ and identify from now on our manifold $M$ with the submanifold $\psi(M)\subseteq\mathbb{R}^{l}$. Given a multigraph $\Gamma$ and a continuous map $\gamma:\Gamma\to M$ which is $C^3$ when restricted to each edge, we can consider

\begin{equation*}
    \Vert \gamma\Vert_{3}=\Vert \gamma\Vert_{0}+\Vert \dot{\gamma}\Vert_{0}+\Vert\Ddot{\gamma}\Vert_{0}+\Vert\dddot \gamma\Vert_{0}
\end{equation*}
where given a collection $u=(u_{E})_{E\in\mathscr{E}}$ of continuous functions along the edges of $\Gamma$, we define
\begin{equation*}
    \Vert u\Vert_{0}=\max\{|u_{E}(t)|:t\in E,E\in\mathscr{E}\}
\end{equation*}
being $|\cdot|$ the Euclidean norm in $\mathbb{R}^{l}$. We have the following compactness result.

\begin{lemma}\label{Compactness sgn}
Let $(g_{n})_{n\in\mathbb{N}}$ be a sequence of $C^{3}$ Riemannian metrics converging to some metric $g\in\mathcal{M}^{3}$. Let $\gamma_{n}:\Gamma\to (M,g_{n})$ be a sequence of stationary geodesic networks. Assume $\Vert \gamma_{n}\Vert_{3}\leq M$ for some $M\in\mathbb{R}_{>0}$. Then there exists a subsequence $(\gamma_{n_{k}})_{k}$ and $\gamma\in\Omega(\Gamma,M)$ such that $\lim_{k\to\infty}\gamma_{n_{k}}=\gamma$ in $\Omega(\Gamma,M)$ and $\gamma:\Gamma\to(M,g)$ is stationary.
\end{lemma}

\begin{proof}
    The Arzela-Ascoli Theorem gives a subsequence $\gamma_{n_{k}}\to \gamma$ in $\Omega(\Gamma,M)$. The fact that $\gamma$ is stationary with respect to $g$ comes from the continuity of the operator $H$ defined in \cite{Staffa} (which plays the role of the mean curvature operator on minimal surfaces) which vanishes in a pair $(g,[\gamma])$ if and only if $\gamma$ is stationary with respect to $g$.
\end{proof}
 
We will also need the following two lemmas.

\begin{lemma}\label{lcountable}
Let $F:\mathbb{R}^{n}\to\mathbb{N} $ be a function. Then there exists $m\in\mathbb{N}$ and a basis $\{v_{1},...,v_{n}\}$ of $\mathbb{R}^{n}$ such that $F(v_{i})=m$ for all $1\leq i\leq n$.
\end{lemma}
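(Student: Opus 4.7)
The plan is to argue by contradiction using the fact that $\mathbb{R}^{n}$ cannot be covered by countably many proper linear subspaces. Start by decomposing
\[
    \mathbb{R}^{n} = \bigsqcup_{m \in \mathbb{N}} F^{-1}(m),
\]
which is a countable partition since the codomain $\mathbb{N}$ is countable. I want to show that at least one level set $F^{-1}(m)$ contains $n$ linearly independent vectors, which will give the desired basis.

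Suppose, for contradiction, that for every $m \in \mathbb{N}$ the set $F^{-1}(m)$ fails to contain $n$ linearly independent vectors. Then the linear span $V_{m} := \operatorname{span}(F^{-1}(m))$ has dimension at most $n-1$, so $V_{m}$ is a proper linear subspace of $\mathbb{R}^{n}$, and $F^{-1}(m) \subseteq V_{m}$. Consequently
\[
    \mathbb{R}^{n} = \bigcup_{m \in \mathbb{N}} F^{-1}(m) \subseteq \bigcup_{m \in \mathbb{N}} V_{m}.
\]
It remains to derive a contradiction from the identity $\mathbb{R}^{n} = \bigcup_{m} V_{m}$. This is a standard fact and can be justified in at least two ways: either by Baire category, noting that each $V_{m}$ is a closed, nowhere dense subset of $\mathbb{R}^{n}$, so their countable union has empty interior; or by Lebesgue measure, noting that each proper subspace $V_{m}$ has $n$-dimensional measure zero, hence the countable union has measure zero while $\mathbb{R}^{n}$ does not. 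Either route contradicts the inclusion $\mathbb{R}^{n} \subseteq \bigcup_{m} V_{m}$.

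Thus there must exist some $m \in \mathbb{N}$ for which $F^{-1}(m)$ contains $n$ linearly independent vectors $v_{1},\ldots,v_{n}$, and these form the required basis with $F(v_{i}) = m$ for every $i$. There is no genuine obstacle here; the only subtlety is recognizing that the naive counting observation (one level set is uncountable) is not strong enough, since a proper subspace is already uncountable, and that one must instead use dimension together with Baire category or measure theory to rule out $\mathbb{R}^{n}$ being a countable union of proper subspaces.
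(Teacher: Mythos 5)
Your argument is correct and is essentially the same as the paper's: both decompose $\mathbb{R}^{n}$ into the level sets $F^{-1}(m)$, observe that if no level set contained a basis then each span would be a proper (closed, nowhere dense) subspace, and conclude via the Baire Category Theorem that $\mathbb{R}^{n}$ cannot be such a countable union. The measure-theoretic alternative you mention is a minor variant, not a different approach.
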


\begin{proof}
Observe that $\mathbb{R}^{n}=\bigcup_{m\in\mathbb{N}}F^{-1}(m)$ and therefore $\mathbb{R}^{n}=\bigcup_{m\in\mathbb{N}}\langle F^{-1}(m)\rangle$ where given $A\subseteq\mathbb{R}^{n}$ we denote $\langle A\rangle$ the subspace spanned by $A$. If $F^{-1}(m)$ did not contain a basis of $\mathbb{R}^{n}$ for every $m\in\mathbb{N}$, $\langle F^{-1}(m)\rangle$ would a proper subspace for every $m$. Therefore, $\mathbb{R}^{n}$ would be a countable union of closed subspaces with empty interior, which leads to a contradiction due to the Baire Category Theorem.
\end{proof}

\begin{lemma}\label{Diff}
Let $\gamma:(-1,1)^{N}\to\Omega(\Gamma,M)$ and $g:(-1,1)^{N}\to\mathcal{M}^{q}$ be smooth maps. Assume that $\gamma(s)$ is stationary with respect to $g(s)$ for every $s\in(-1,1)^{N}$. Then for every $t\in(-1,1)^{N}$ and every $v\in\mathbb{R}^{N}$
\begin{equation*}
    \frac{\partial}{\partial v}\big|_{s=t}\length(g(s),\gamma(s))=\frac{1}{2}\int_{\gamma(t)}\trace_{\gamma(t),g(t)}\frac{\partial g}{\partial v}(t)\dL_{g(t)}.
\end{equation*}
\end{lemma}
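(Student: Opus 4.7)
The plan is to apply the chain rule to decompose the derivative into a ``metric variation'' piece (varying $g$ with $f$ fixed) and a ``map variation'' piece (varying $f$ with $g$ fixed), and to use the stationarity hypothesis to kill the latter, leaving only an explicit integral for the former. Explicitly, I would write
\begin{equation*}
\length(g(s),f(s)) \;=\; \sum_{E\in\mathscr{E}} n(E)\int_E \sqrt{g(s)_{f(s)(u)}\bigl(\dot{f}(s)(u),\dot{f}(s)(u)\bigr)}\,du,
\end{equation*}
view $\length$ as a function on $\mathcal{M}^{q}\times\Omega(\Gamma,M)$, and use the chain rule to obtain
\begin{equation*}
\frac{\partial}{\partial v}\Big|_{s=t}\length\bigl(g(s),f(s)\bigr) \;=\; A + B,
\end{equation*}
where $A$ is the derivative taken with $f\equiv f(t)$ frozen (the first variation of length with respect to the metric) and $B$ is the derivative taken with $g\equiv g(t)$ frozen (the first variation of length with respect to the $\Gamma$-net).

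To handle $B$, I would observe that $s\mapsto f(t+sv)$ is a smooth one-parameter family in $\Omega(\Gamma,M)$ with value $f(t)$ at $s=0$. By the stationarity of $f(t)$ with respect to $g(t)$ (which holds by hypothesis),
\begin{equation*}
B \;=\; \frac{d}{ds}\Big|_{s=0}\length_{g(t)}\bigl(f(t+sv)\bigr) \;=\; 0.
\end{equation*}
For $A$, differentiating under the integral sign---which is legitimate since $g$ and $f$ are smooth in $s$ and $f(s)|_{E}$ is $C^{2}$ on each edge $E$---gives
\begin{equation*}
A \;=\; \sum_{E\in\mathscr{E}} n(E)\int_E \frac{\frac{\partial g}{\partial v}(t)_{f(t)(u)}\bigl(\dot{f}(t)(u),\dot{f}(t)(u)\bigr)}{2\sqrt{g(t)_{f(t)(u)}\bigl(\dot{f}(t)(u),\dot{f}(t)(u)\bigr)}}\,du.
\end{equation*}
Using the homogeneity identity $T(\dot{f},\dot{f})=|\dot{f}|_{g}^{2}\,\trace_{f,g}T$ applied to $T=\frac{\partial g}{\partial v}(t)$, the integrand simplifies to $\tfrac{1}{2}\,|\dot{f}(t)(u)|_{g(t)}\,\trace_{f(t),g(t)}\frac{\partial g}{\partial v}(t)(u)$, and recognizing $\sum_{E}n(E)\int_{E}(h\circ f)(u)\,|\dot{f}(u)|_{g}\,du$ as $\int_{f}h\,\dL_{g}$ yields
\begin{equation*}
A \;=\; \frac{1}{2}\int_{f(t)}\trace_{f(t),g(t)}\frac{\partial g}{\partial v}(t)\,\dL_{g(t)},
\end{equation*}
which is the claim.

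The hard part is conceptual rather than technical: the statement is a direct application of the first variation formula, with stationarity cancelling the ``horizontal'' piece so that only the infinitesimal change of length due to the change of metric survives. The only subtleties to verify are that $s\mapsto f(t+sv)$ really is a smooth one-parameter family in $\Omega(\Gamma,M)$ to which the definition of stationary geodesic network applies, and that differentiation under the integral sign is justified; both follow from the smoothness of $g$ and $f$ together with the $C^{2}$ regularity on edges built into the definition of $\Omega(\Gamma,M)$.
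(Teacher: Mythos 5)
Your proposal is correct and follows essentially the same route as the paper: decompose the derivative via the chain rule on $\length:\mathcal{M}^{q}\times\Omega(\Gamma,M)\to\mathbb{R}$, kill the map-variation term by the stationarity of $f(t)$ with respect to $g(t)$, and compute the metric-variation term by differentiating under the integral sign and rewriting the integrand via the trace along $f$.
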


\begin{proof}
Using that the length functional is a smooth function $\length:\mathcal{M}^{q}\times\Omega(\Gamma,M)\to\mathbb{R}$ and the chain rule, we get

\begin{align*}
    \frac{\partial}{\partial v}\big|_{s=t}\length(g(s),\gamma(s)) & = D\length_{(g(t),\gamma(t))}(D(g\times \gamma)_{t}(v))\\
    & = D\length_{(g(t),\gamma(t))}(\frac{\partial g}{\partial v}(t),\frac{\partial \gamma}{\partial v}(t))\\
    & =D_{1}\length_{(g(t),\gamma(t))}(\frac{\partial g}{\partial v}(t))+D_{2}\length_{g(t),\gamma(t))}(\frac{\partial \gamma}{\partial v}(t))\\
    & =D_{1}\length_{(g(t),\gamma(t))}(\frac{\partial g}{\partial v}(t)).
\end{align*}
The second term in the penultimate equation vanishes because $\gamma(t)$ is stationary with respect to $g(t)$. Hence
\begin{align*}
    \frac{\partial}{\partial v}\big|_{s=t}\length(g(s),\gamma(s)) & =\frac{d}{ds}\big|_{s=0}\length(g(t+sv),\gamma(t))\\
    & = \frac{d}{ds}\big|_{s=0}\sum_{E\in\mathscr{E}}n(E)\int_{E}\sqrt{g_{t+sv}(\dot{\gamma}_{t}(u),\dot{\gamma}_{t}(u))}du\\
    & = \sum_{E\in\mathscr{E}}n(E)\int_{E}\frac{d}{ds}\big|_{s=0}\sqrt{g_{t+sv}(\dot{\gamma}_{t}(u),\dot{\gamma}_{t}(u))}du\\
    & =\sum_{E\in\mathscr{E}}n(E)\int_{E}\frac{\frac{\partial g}{\partial v}(t)(\dot{\gamma}_{t}(u),\dot{\gamma}_{t}(u))}{2\sqrt{g_{t}(\dot{\gamma}_{t}(u),\dot{\gamma}_{t}(u))}}du\\
    & =\frac{1}{2}\sum_{E\in\mathscr{E}}n(E)\int_{E}\frac{\frac{\partial g}{\partial v}(t)(\dot{\gamma}_{t}(u),\dot{\gamma}_{t}(u))}{g_{t}(\dot{\gamma}_{t}(u),\dot{\gamma}_{t}(u))}\sqrt{g_{t}(\dot{\gamma}_{t}(u),\dot{\gamma}_{t}(u))}du\\
    & =\frac{1}{2}\sum_{E\in\mathscr{E}}n(E)\int_{\gamma(t)_{E}}\trace_{\gamma(t),g(t)}\frac{\partial g}{\partial v}(t)\dL_{g(t)}\\
    & =\frac{1}{2}\int_{\gamma(t)}\trace_{\gamma(t),g(t)}\frac{\partial g}{\partial v}(t)\dL_{g(t)}.
\end{align*}
\end{proof}

\begin{proof}[Proof of Proposition \ref{Perturbation}]
Notice that it suffices to show that for each $p\in\mathbb{N}$, there exists a full measure subset $\mathcal{A}(p)\subseteq I^{N}$ where \textit{(1)} and \textit{(2)} hold, because in that case $\mathcal{A}=\bigcap_{p\in\mathbb{N}}\mathcal{A}(p)$ will have the desired property. Therefore we will assume $p\in\mathbb{N}$ is fixed.

Let $g:I^{N}\to\mathcal{M}^{q}$ be a smooth embedding. Let $\{\Gamma_{i}\}_{i\geq 1}$ be a sequence enumerating the countable collection of all good weighted multigraphs. For each $i\geq 1$, let $\mathcal{S}^{q}(\Gamma_{i})$ be the space of pairs $(g,[\gamma])$ where $g\in\mathcal{M}^{q}$, $\gamma:\Gamma_{i}\to (M,g)$ is an embedded stationary geodesic net and $[\gamma]$ denotes its class modulo reparametrization as defined in \cite{Staffa} for connected multigraphs with at least three incoming edges at each vertex and in \cite{White} for embedded closed geodesics. By the structure theorems proved in \cite{Staffa} and \cite{White}, each $\mathcal{S}^{q}(\Gamma_{i})$ is a second countable Banach manifold and the projection map $\Pi_{i}:\mathcal{S}^{q}(\Gamma_{i})\to\mathcal{M}^{q}$ mapping $(g,[\gamma])\mapsto g$ is Fredholm of index $0$. A pair $(g,[\gamma])\in\mathcal{S}_{q}(\Gamma_{i})$ is a critical point of $\Pi_{i}$ if and only if $\gamma$ admits a nontrivial Jacobi field with respect to the metric $g$.

By Smale's transversality theorem, we can perturb $g:I^{N}\to\mathcal{M}^{q}$ slightly in the $C^{\infty}$ topology to a $C^{\infty}$ embedding $g':I^{N}\to\mathcal{M}^{q}$ which is transversal to $\Pi_{i}:\mathcal{S}^{q}(\Gamma_{i})\to\mathcal{M}^{q}$ for every $i\in\mathbb{N}$. Transversality implies that $M_{i}=\Pi_{i}^{-1}(g'(I^{N}))$ is an $N$-dimensional embedded submanifold of $\mathcal{S}^{q}(\Gamma_{i})$ for every $i\in\mathbb{N}$. Let $\pi_{i}=(g')^{-1}\circ\Pi_{i}\big|_{M_{i}}:M_{i}\to I^{N}$. Let $\tilde{\mathcal{A}}_{i}\subseteq I^{n}$ be the set of regular values of $\pi_{i}$, which is a set of full measure by Sard's theorem. Let $\tilde{\mathcal{A}}_{0}\subseteq I^{N}$ be the set of points for which the Lipschitz function $s\mapsto\omega^{1}_{p}(g'(s))$ is differentiable. Observe that $\tilde{\mathcal{A}}_{0}$ has full measure by Rademacher's theorem. Therefore, $\tilde{\mathcal{A}}=\bigcap_{i\geq 0}\tilde{\mathcal{A}}_{i}$ is a full measure subset of $I^{N}$. Notice that by transversality, if $t\in\tilde{\mathcal{A}}$ then $g'(t)$ is a bumpy metric, i.e. all embedded stationary geodesic nets with respect to $g'(t)$ and with domain a good weighted multigraph are nondegenerate; and also the map $s\mapsto\omega_{p}^{1}(g'(s))$ is differentiable at $s=t$.

Given a weighted multigraph $\Gamma=\bigcup_{i=1}^{P}\Gamma_{i}$ whose connected components $\Gamma_{i}$ are good and a natural number $M\in\mathbb{N}$, we define $\mathcal{B}_{\Gamma,M}$ as the set of all $t\in I^{N}$ such that there exists a stationary geodesic network $\gamma:\Gamma\to (M,g'(t))$ verifying

\begin{enumerate}
    \item $\gamma_{i}=\gamma|_{\Gamma_{i}}$ is an embedding for each $1\leq i\leq P$.
    \item $\Vert \gamma_{i}\Vert_{3}\leq M$ for every $1\leq i\leq P$.
    \item $F_{1}(g'(t),\gamma_{i})\geq\frac{1}{M}$ for every $1\leq i\leq P$.
    \item $F_{2}^{(E_{1},i_{1}),(E_{2},i_{2})}(g'(t),\gamma_{i})\leq 1-\frac{1}{M}$ for every $1\leq i\leq P$ and every pair $(E_{1},i_{1})\neq(E_{2},i_{2})$  in $\mathscr{E}_{i}\times\{0,1\}$ such that $\pi_{E_{1}}(i_{1})=\pi_{E_{2}}(i_{2})$.
    \item $d^{E}_{(g'(t),\gamma_{i})}(s)\geq\frac{1}{M}$ for every $1\leq i\leq P$, $E\in\mathscr{E}_{i}$ and $s\in E$.
    \item $d^{E,E'}_{(g'(t),\gamma_{i})}(s)\geq\frac{1}{M}$ for every $1\leq i\leq P$, $E\neq E'\in\mathscr{E}_{i}$ and $s\in E$.
    \item $\omega_{p}^{1}(g'(t))=\length_{g'(t)}(\gamma)$.
\end{enumerate}
where $\mathscr{E}_{i}$ denotes the set of edges of $\Gamma_{i}$. Observe that $I^{N}=\bigcup_{\Gamma,M}\mathcal{B}_{\Gamma,M}$ because of (\ref{Almgren widths}) and Remark \ref{unionofregular}. We claim that each $\mathcal{B}_{\Gamma,M}\subseteq I^{N}$ is closed.

Indeed, suppose we have a sequence $\{t_{j}\}_{j\in\mathbb{N}}\subseteq\mathcal{B}_{\Gamma,M}$ converging to some $t\in I^{N}$. Let $\gamma^{j}$ be the stationary geodesic network corresponding to $g'(t_{j})$ and verifying properties (1) to (7) above. By property (2) and Lemma (\ref{Compactness sgn}), passing to a subsequence we have that if $\gamma^{j}_{i}=\gamma^{j}\big|_{\Gamma_{i}}$ then there exists $\gamma_{i}:\Gamma_{i}\to M$ such that $\lim_{j\to\infty}\gamma_{i}^{j}=\gamma_{i}$ in $\Omega(\Gamma_{i},M)$ and $\gamma_{i}$ is stationary with respect to $g'(t)$ for each $1\leq i\leq P$. Observe also that if $\gamma=\bigcup_{j}\gamma_{i}$
\begin{equation*}
    L_{g'(t)}(\gamma)=\lim_{j\to\infty}L_{g'(t_{j})}(\gamma^{j})=\lim_{j\to\infty}\omega_{p}(t_{j})=\omega_{p}(t).
\end{equation*}

Properties (2) to (6) are preserved when we take the limit of the sequence $\gamma^{j}$, so it suffices to show that $\gamma|_{\Gamma_{i}}$ is embedded for each $1\leq i\leq P$. Fix such $i$. Properties (3), (4) and (5) imply that $\gamma_{i}$ is injective along the edges and property (6) combined with property (4) imply that the images of different edges do not intersect (except at the common vertices).

As each $\mathcal{B}_{\Gamma,M}$ is closed, they are measurable and therefore so are the sets $\tilde{\mathcal{A}}_{\Gamma,M}=\tilde{\mathcal{A}}\cap\mathcal{B}_{\Gamma,M}$ (whose union is $\tilde{\mathcal{A}}$). Let $\mathcal{\mathcal{A}}_{\Gamma,M}$ be the set of points $t\in\tilde{\mathcal{A}}_{\Gamma,M}$ where the Lebesgue density of $\tilde{\mathcal{A}}_{\Gamma,M}$ at $t$ is $1$. By the Lebesgue Differentiation Theorem, $\tilde{\mathcal{A}}_{\Gamma,M}\setminus\mathcal{A}_{\Gamma,M}$ has Lebesgue measure $0$ for each pair $(\Gamma,M)$. Let us define $\mathcal{A}=\bigcup_{\Gamma,M}\mathcal{A}_{\Gamma,M}$, observe that as $\mathcal{\tilde{A}}\setminus\mathcal{A}$ has measure $0$, $\mathcal{A}\subseteq I^{N}$ has full measure.

Fix $t\in\mathcal{A}$. Let $(\Gamma,M)$ be such that $t\in\mathcal{A}_{\Gamma,M}$. As the density of $\tilde{\mathcal{A}}_{\Gamma,M}$ at $t$ is $1$,  given $v\in\mathbb{R}^{N}$ with $|v|=1$ we can find a sequence $\{t_{m}(v)\}_{m\in\mathbb{N}}\subseteq\tilde{\mathcal{A}}_{\Gamma,M}$ such that $\lim_{m\to\infty}t_{m}(v)=t$ and $\lim_{m\to\infty}\frac{t-t_{m}(v)}{|t-t_{m}(v)|}=v$. Denoting $\omega_{p}(s)=\omega_{p}^{1}(g'(s))$, using that $\omega_{p}$ is a Lipschitz function we can see that
\begin{equation}\label{w1}
    \lim_{m\to\infty}\frac{\omega_{p}(t_{m}(v))-\omega_{p}(t)}{|t-t_{m}|}=\frac{\partial}{\partial v}\omega_{p}(t).
\end{equation}

As $t_{m}(v)\in\tilde{\mathcal{A}}_{\Gamma,M}$, for each $m\in\mathbb{N}$ there exists a stationary geodesic network $\gamma_{m}:\Gamma\to M$ with respect to $g'(t_{m}(v))$ such that
\begin{equation*}
    \omega_{p}(t_{m}(v))=\omega^{1}_{p}(g'(t_{m}(v)))=\length_{g'(t_{m}(v))}(\gamma_{m})
\end{equation*}
and properties (1) to (6) above hold. By the reasoning used to prove that the $\mathcal{B}_{\Gamma,M}$ are closed, we can construct a stationary geodesic net $\gamma:\Gamma\to (M,g'(t))$ which is embedded when restricted to each connected component $\Gamma_{i}$ of $\Gamma$, is the limit of (a subsequence of) the $\gamma_{m}$'s in the $C^{2}$ topology and realizes the width $\omega_{p}^{1}(g'(t))$. Hence from (\ref{w1}) we get

\begin{equation*}
    \frac{\partial}{\partial v}\omega_{p}(t) =\lim_{m\to\infty}\frac{\length_{g'(t_{m})}(\gamma_{m})-\length_{g'(t)}(\gamma)}{|t-t_{m}|}.
\end{equation*}

As $\gamma|_{\Gamma_{i}}$ is an embedded stationary geodesic net with respect to $g'(t)$ for each $1\leq i\leq P$ and $g'(t)$ is bumpy, $\Pi_{i}:\mathcal{S}^{q}(\Gamma_{i})\to\mathcal{M}^{q}$ is a diffeomorphism from a neighborhood $U_{i}$ of $(g'(t),[\gamma_{i}])$ to a neighborhood $W_{i}=\Pi_{i}(U)$ of $g'(t)$. Denote $\Xi_{i}$ its inverse. As there exists $m_{0}\in\mathbb{N}$ such that $g'(t_{m})\in W=\bigcap_{i=1}^{P}W_{i}$ and $[\gamma_{m}\big|_{\Gamma_{i}}]\in U_{i}$ for every $m\geq m_{0}$, we deduce that $[\gamma_{m}\big|_{\Gamma_{i}}]=\Xi_{i}(g'(t_{m}(v)))$ for each $m\geq m_{0}$ and each $1\leq i\leq P$. Let us define $\Xi:W\to\hat{\Omega}(\Gamma,M)$ as $\Xi(g)=h$ where $h|_{\Gamma_{i}}=\Xi_{i}(g)$. Thus by Lemma \ref{Diff}

\begin{align*}
    \frac{\partial}{\partial v}\omega_{p}(t) & =\lim_{m\to\infty}\frac{\length_{g'(t_{m})}(\Xi(g'(t_{m})))-\length_{g'(t)}(\Xi(g'(t)))}{|t_{m}-t|}\\
    & = \frac{\partial}{\partial v}\big|_{s=t}\length(g'(s),\Xi(g'(s)))\\
    & =\frac{1}{2}\int_{\gamma_{v}}\trace_{\gamma_{v},g'(t)}\frac{\partial g'}{\partial v}(t)\dL_{g'(t)}.
\end{align*}
Where $\gamma_{v}=\Xi(g'(t))$ is the one constructed before. Observe that $\gamma_{v}$ depends on $v$ and that the previous formula holds for each $v\in\mathbb{R}^{N}$, $|v|=1$. Notice that each $\gamma_{v}$ is a stationary geodesic network with respect to $g'(t)$, and as $g'(t)$ is bumpy there are countably many possible $\gamma_{v}'s$, say $\{h_{j}\}_{j\in\mathbb{N}}$. This induces a map $F:\mathbb{R}^{N}\to\mathbb{N}$ defined as $F(0)=1$ and if $w\neq 0$ then $F(w)= j$ where $\gamma_{\frac{w}{|w|}}=h_{j}$. By Lemma \ref{lcountable} we can obtain $m\in\mathbb{N}$ and a basis $w_{1},...,w_{N}$ of $\mathbb{R}^{N}$ with the property $\gamma(w_{i})=m$ for every $1\leq i\leq N$. Therefore if we set $v_{i}:=\frac{w_{i}}{|w_{i}|}$, $v_{1},...,v_{N}$ is still a basis and by definition $\gamma_{v_{i}}=h_{m}$ for every $i$. By linearity of directional derivatives, denoting $\gamma=h_{m}$ we deduce that

\begin{equation*}
    \frac{\partial}{\partial v}\omega_{p}(t) =\frac{1}{2}\int_{\gamma}\trace_{\gamma,g'(t)}\frac{\partial g'}{\partial v}(t)\dL_{g'(t)}
\end{equation*}
for every unit $v\in\mathbb{R}^{N}$, which completes the proof.
\end{proof}

\begin{proposition}\label{Perturbation nondeg}
Let $M$ be a closed manifold and let $g$ be a $C^{q}$ Riemannian metric on $M$, $q\geq 3$. Let $\gamma_{1},...,\gamma_{k}$ be a finite collection collection of connected, embedded stationary geodesic networks on $(M,g)$ whose domains are good weighted multigraphs and let $\mathcal{U}\subseteq\mathcal{M}^{q}$ be an open neighborhood of $g$. Then there exists $g'\in\mathcal{U}$ such that $\gamma_{1},...\gamma_{k}$ are non-degenerate stationary geodesic nets with respect to $g'$.
\end{proposition}

\begin{proof}
    Following \cite[Lemma~4]{MNS}, we will consider conformal perturbations of the metric of the form $g_{\varepsilon}(x)=e^{-2\varepsilon\phi(x)}g(x)$. Let us denote $\tilde{\gamma}=\bigcup_{i=1}^{k}\gamma_{i}$, $\tilde{\Gamma}=\bigcup_{i=1}^{k}\Gamma_{i}$ (where $\gamma_{i}:\Gamma_{i}\to M$) and $\tilde{\mathscr{E}}$ the set of edges of $\tilde{\gamma}$. Notice that $\tilde{\gamma}:\tilde{\Gamma}\to M$ is a stationary geodesic network whose edges may overlap, even non-transversally. Given $E\in\mathscr{E}$, let $\reg(\tilde{\gamma}_{E})$ be the set of interior points of $\tilde{\gamma}_{E}$ which are not points of transverse intersection with any other edge $\tilde{\gamma}_{E}$. We define a finite poset
    \begin{equation*}
        \mathcal{P}=\{\bigcap_{i=1}^{l}\reg(\tilde{\gamma}_{E_{i}})\neq\emptyset:E_{1},...,E_{l}\in\tilde{\mathscr{E}},\text{ }E_{i}\neq E_{j}\text{ }\forall i\neq j\}
    \end{equation*}
    which is the collection of finite non-empty intersections of sets in $\{\reg(\tilde{\gamma}_{E}):E\in\mathscr{E}\}$, with the order given by the inclusion. Denote by $\mathcal{P}'$ the set of minimal elements in $\mathcal{P}$. Observe that if $\alpha,\alpha'$ are two different elements of $\mathcal{P}'$ then they are disjoint. Given $\alpha\in\mathcal{P}'$, write $\alpha=\bigcap_{i=1}^{l}\reg(\tilde{\gamma}_{E_{i}})$ in the unique way such that $\alpha\cap \tilde{\gamma}_{E}=\emptyset$ for every $E\in\tilde{\mathscr{E}}\setminus\{E_{1},...,E_{l}\}$. Pick $t_{\alpha}\in \alpha$ for every $\alpha\in\mathcal{P}'$, and let $\eta>0$ be such that the geodesic balls $B_{\alpha}=B(p_{\alpha},\eta)$ verify
    \begin{itemize}
        \item $B_{\alpha}\cap B_{\alpha'}=\emptyset$ if $\alpha\neq \alpha'$.
        \item $B_{\alpha}\cap \gamma_{E}=\emptyset$ if $E\notin\{E_{1},...,E_{l}\}$.
        \item $B_{\alpha}\cap \gamma_{E_{i}}\subseteq\alpha$ for every $i=1,...,l$.
        \item There exists a diffeomorphism $\rho_{\alpha}:B_{\alpha}\to\mathbb{R}^{n}$ such that $\rho_{\alpha}(\gamma_{E_{i}}\cap B_{\alpha})=\rho_{\alpha}(\alpha\cap B_{\alpha})=\{(t,0,0,...,0):t\in\mathbb{R}\}$ for each $i=1,...,l$.
    \end{itemize}
    Denote $B'_{\alpha}=B(p_{\alpha},\frac{\eta}{2})$. Observe that for each $E\in\tilde{\mathscr{E}}$ there exists at least one $\alpha\in\mathcal{P}'$ such that $\alpha\subseteq\tilde{\gamma}_{E}$. Choose such an $\alpha$ for each $E\in\mathscr{E}$ and denote $B_{E}=B_{\alpha}$ and $B'_{E}=B'_{\alpha}$. We can now proceed to define the function $\phi$ which will induce the one-parameter family of metrics $g_{\varepsilon}(x)=e^{-2\varepsilon\phi(x)}$ mentioned before.
    
    For each $\alpha\in\mathcal{P}'$, let $\psi_{\alpha}:M\to\mathbb{R}$ be a smooth function with $0\leq\psi_{\alpha}\leq 1$, $\support(\psi_{\alpha})\subseteq B_{\alpha}$ and $\psi_{\alpha}\equiv 1$ in $B'_{\alpha}$. Let $f_{\alpha}:B_{\alpha}\to\mathbb{R}$ be given in local coordinates under the chart $(B_{\alpha},\rho_{\alpha})$ by $f_{\alpha}(x)=\sum_{i=2}^{n}x_{i}^{2}$. We define $\phi=\sum_{\alpha\in\mathcal{P}'}\psi_{\alpha}f_{\alpha}$. An easy computation shows that $D\phi$ vanishes along $\tilde{\gamma}$ and in local coordinates $\Hess_{\tilde{\gamma}(t)}\phi(X,Y)=\psi_{\alpha}(x)\sum_{i=2}^{n}x_{i}y_{i}$ if $\tilde{\gamma}(t)\in B_{\alpha}$, $X=(x_{1},...,x_{n})$ and $Y=(y_{1},...,y_{n})$; and $\Hess_{\tilde{\gamma}(t)}\phi\equiv 0$ if $t\notin\bigcup_{\alpha\in\mathcal{P}'}B_{\alpha}$. In particular, if $\tilde{\gamma}(t)\in B'_{\alpha}$ for some $\alpha\in\mathcal{P}'$ then $\Hess_{\tilde{\gamma}(t)}\phi(X,X)=0$ if and only if $X\in\langle\dot{\gamma}_{E_{j}}(t)\rangle$ for some (or equivalently, for every) $j\in\{1,...,l\}$ where $\alpha=\bigcap_{i=1}^{l}\reg(\tilde{\gamma}_{E_{i}})$.

    
    Therefore we know that $\phi$ and $D\phi$ vanish along each $\gamma_{i}$. Hence by \cite[Theorem 1.159]{Besse}, the $\gamma_{1},...,\gamma_{k}$ are still stationary with respect to $g_{\varepsilon}(x)=e^{-2\varepsilon\phi(x)}g(x)$. Fix $\gamma=\gamma_{i}:\Gamma\to M$ with set of vertices $\mathscr{V}$ and set of edges $\mathscr{E}$. We assume that $\Gamma$ is good* (i.e. every vertex has at least $3$ different incoming edges), the case when $\gamma$ is an embedded closed geodesic can be handled with the same method using the ellipticity of its Jacobi operator. As discussed in Section \ref{Section Jacobi}, the stability operator of $\gamma$ with respect to $g$ is the map $L:H^{2}(\gamma)\to L^{2}(\gamma)$ given by
    
    \begin{equation*}
        L(J)=((L_{E}(J))_{E\in\mathscr{E}},(B_{v}(J))_{v\in\mathscr{V}})
    \end{equation*}
    where
    \begin{align*}
        L_{E}(J) & =-\frac{n(E)}{l(E)}\bigg[\ddot{J}_{E}^{\perp}+R(\dot{\gamma},J_{E}^{\perp}),\dot{\gamma}\bigg]\\
        B_{v}(J) & =\sum_{(E,i):\pi_{E}(i)=v} (-1)^{i+1}\frac{n(E)}{l(E)}\dot{J}^{\perp}_{E}(i).
    \end{align*}
    
    Let us compute which is the change in the Jacobi operator along $\gamma$ when we switch from the metric $g$ to $g_{\varepsilon}$. We will denote $L^{\varepsilon}$ the operator corresponding to $g_{\varepsilon}$. Using \cite[Theorem 1.159]{Besse} and the fact that $D\phi=0$ along $\gamma_{i}$, we can see that $B_{v}^{\varepsilon}=B_{v}$ for all $v\in\mathscr{V}$, and that
    \begin{equation*}
        L_{E}^{\varepsilon}(J)=-\frac{n(E)}{l(E)}(\ddot J_{E}^{\perp}+R(\dot{\gamma},J_{E}^{\perp})\dot{\gamma}+\varepsilon\Hess\phi(J_{E}^{\perp}))
    \end{equation*}
    where the covariant derivatives and the curvature tensor $R$ are taken with respect to the metric $g$, and at each point $p\in M$, $\Hess_{p}(\phi):T_{p}M\to T_{p}M$ is the linear transformation such that the Hessian of $\phi$ at $p$ is given by $(X,Y)\mapsto \langle\Hess_{p}\phi(X),Y\rangle_{g}$.
    
     We know from Section \ref{Section Jacobi} that each $L^{\varepsilon}:H^{2}(\gamma)\to L^{2}(\gamma)$ admits a non-decreasing sequence of eigenvalues $\lambda_{1}^{\varepsilon}\leq\lambda_{2}^{\varepsilon}\leq...\leq\lambda_{Q}^{\varepsilon}\leq...$ which are characterized by
    \begin{equation*}
        \lambda_{i}^{\varepsilon}=\inf_{W}\max_{J\in W\setminus\{0\}}\frac{\langle L^{\varepsilon}(J),\iota(J)\rangle}{\langle \iota(J),\iota(J)\rangle}
    \end{equation*}
     where the infimum is taken over all $i$-dimensional subspaces $W$ of $H^{2}(\gamma)$. Also, the map $\varepsilon\mapsto L^{\varepsilon}$ is continuous; therefore $\lambda_{i}^{\varepsilon}$ varies continuously with $\varepsilon$ for every $i\in\mathbb{N}$. We will use these facts to show that for sufficiently small values of $\varepsilon>0$, $0$ is not an eigenvalue of $L^{\varepsilon}$.
    
    Let $Q$ be the unique natural number such that $0=\lambda_{Q}<\lambda_{Q+1}$ (here $\lambda_{i}:=\lambda_{i}^{0}$). Denote $S$ the sum of the eigenspaces corresponding to $\lambda_{1},...,\lambda_{Q}$. Let $J\in S$, $J\neq 0$. Then we have
    \begin{align*}
        \frac{\langle L^{\varepsilon}(J),\iota(J)\rangle}{\langle \iota(J),\iota(J)\rangle} & =\frac{\langle L(J),\iota(J)\rangle}{\langle \iota(J),\iota(J)\rangle}-\frac{\sum_{E\in\mathscr{E}}\int_{E}\frac{n(E)}{l(E)}\varepsilon \langle\Hess_{\gamma(t)}(\phi)(J_{E}^{\perp}(t)),J_{E}^{\perp}(t)\rangle_{g}dt}{\langle \iota(J),\iota(J)\rangle}\\
        & \leq -\varepsilon\sum_{E\in\mathscr{E}}\frac{n(E)}{l(E)}\frac{\int_{E}\langle\Hess_{\gamma(t)}(\phi)( J_{E}^{\perp}(t)),J_{E}^{\perp}(t)\rangle_{g}dt}{\langle \iota(J),\iota(J)\rangle}\\
        & \leq 0
    \end{align*}
    because $\Hess_{\gamma(t)}\phi\geq 0$ for every $t\in\Gamma$. 
    Suppose there is equality for some $J\in S\setminus\{0\}$. 
    Then the two inequalities should be equalities. 
    From the first one we deduce that $J$ is Jacobi along $\gamma$ for the metric $g$, and thus it verifies $\ddot J^{\perp}_{E}+R(\dot{\gamma},J^{\perp}_{E})\dot{\gamma}=0$ for every $E\in\mathscr{E}$. 
    From the second one, by considering the values of $t$ for which $\gamma(t)\in B'_{E}$, we see that $J^{\perp}_{E}$ is a null vector of $\Hess_{\gamma(t)}\phi$ along $\gamma_{E}\cap B'_{E}$ and therefore $J^{\perp}_{E}=0$ on $\gamma_{E}\cap B'_{E}$; and as it satisfies the Jacobi equation this implies $J^{\perp}_{E}=0$ for every $E\in\mathscr{E}$. Thus $J$ must be parallel and hence $J=0$ as $H^{2}(\gamma)$ does not contain non-trivial parallel vector fields. But this is a contradiction because we chose $J\in S\setminus\{0\}$. Hence we just proved that
    \begin{equation*}
        \frac{\langle L^{\varepsilon}(J),\iota(J)\rangle}{\langle \iota(J),\iota(J)\rangle}<0
    \end{equation*}
    for every $J\in S\setminus\{0\}$. As $S$ is finite dimensional and $\frac{\langle L^{\varepsilon}(J),\iota(J)\rangle}{\langle \iota(J),\iota(J)\rangle}$ is invariant under rescaling of $J$, the compactness of the unit ball in $S$ implies that there exists $c(\varepsilon)>0$ such that
    \begin{equation*}
        \frac{\langle L^{\varepsilon}(J),\iota(J)\rangle}{\langle \iota(J),\iota(J)\rangle}\leq -c(\varepsilon)
    \end{equation*}
    for every $J\in S\setminus\{0\}$. By the min-max characterization of the eigenvalues for $L^{\varepsilon}$, we see that $\lambda_{1}^{\varepsilon}\leq \lambda_{2}^{\varepsilon}\leq...\leq\lambda^{\varepsilon}_{Q}\leq c(\varepsilon)<0$. If we also choose $\varepsilon$ sufficiently small so that $\lambda_{Q+1}^{\varepsilon}>0$, we get that for $0<\varepsilon<\varepsilon(\gamma)$, $\gamma$ is nondegenerate with respect to $g_{\varepsilon}$. Taking $0<\varepsilon<\min\{\varepsilon(\gamma_{i}):1\leq i\leq k\}$ such that $g_{\varepsilon}\in\mathcal{U}$ and defining $g':=g_{\varepsilon}$ we get the desired result.

\end{proof}

\begin{lemma}\label{gradient}
Given $\eta > 0$ and $N \in \mathbb{N}$, there exists $\varepsilon > 0$ depending on $\eta$ and $N$ such that the following is true: for any Lipschitz function $f: I^N \rightarrow \mathbb{R}$ satisfying 
\begin{equation*}
    |f(x) - f(y)| \leq 2\varepsilon
\end{equation*}
for every $x,y \in I^N$, and for any subset $\mathcal{A}$ of $I^N$ of full measure, there exist $N + 1$ sequences of points $\{y_{1,m}\}_m,\cdots, \{y_{N + 1,m}\}_m$ contained in $\mathcal{A}$ and converging to a common limit $y \in (-1, 1)^N$ such that:
\begin{itemize}
    \item f is differentiable at each $y_{i, m}$,
    \item the gradients $\nabla f(y_{i,m})$ converge to $N + 1$ vectors $v_1,\cdots,v_{N + 1}$ with
    \begin{equation*}
        d_{\mathbb{R}^N}(0,\text{Conv}(v_1, \cdots, v_{N + 1})) < \eta.
    \end{equation*}
\end{itemize}
\end{lemma}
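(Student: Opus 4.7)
The plan is to argue by contradiction via a Clarke-style generalized gradient. Let $D\subseteq I^N$ denote the set of differentiability points of $f$ (a full measure set by Rademacher's theorem, so $\mathcal{A}\cap D$ is still of full measure), and define
\begin{equation*}
G(y):=\bigcap_{r>0}\overline{\mathrm{Conv}}\{\nabla f(x):x\in B(y,r)\cap \mathcal{A}\cap D\}.
\end{equation*}
Since $f$ is Lipschitz, each $G(y)$ is a nonempty compact convex subset of $\mathbb{R}^N$, and the multifunction $y\mapsto G(y)$ is upper semi-continuous with compact convex values. The first observation is that the failure of the conclusion at a point $y$ is equivalent to $d(0,G(y))\geq\eta$: if instead $v_0\in G(y)$ had $|v_0|<\eta$, then for every $m\in\mathbb{N}$ the vector $v_0$ would lie in the closed convex hull of gradients on $B(y,1/m)\cap \mathcal{A}\cap D$, and Carath\'eodory's theorem would yield $v_0=\lim_{m\to\infty}\sum_{i=1}^{N+1}\lambda_{i,m}\nabla f(x_{i,m})$ for points $x_{i,m}\in B(y,1/m)\cap \mathcal{A}\cap D$. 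Extracting a subsequence so that $\lambda_{i,m}\to\lambda_i$ and $\nabla f(x_{i,m})\to v_i$ (possible because the gradients are bounded by $\mathrm{Lip}(f)$), I would set $y_{i,m}=x_{i,m}$ and observe that $d(0,\mathrm{Conv}(v_1,\ldots,v_{N+1}))\leq|\textstyle\sum\lambda_i v_i|=|v_0|<\eta$.

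Assuming from now on that $d(0,G(y))\geq\eta$ everywhere, I would apply the separating hyperplane theorem at each $y$ to produce a unit vector $w_y\in S^{N-1}$ with $\langle w_y,v\rangle\geq\eta$ for all $v\in G(y)$. Upper semi-continuity of $G$ gives a radius $r_y>0$ on which $G(\cdot)\subseteq G(y)+B(0,\eta/2)$ (in one-sided Hausdorff sense), making
\begin{equation*}
O_y:=\{y'\in I^N:\min_{v\in G(y')}\langle w_y,v\rangle>\eta/2\}
\end{equation*}
an open neighborhood of $y$ (one notes that $y'\mapsto\min_{v\in G(y')}\langle w_y,v\rangle$ is lower semi-continuous). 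Compactness of $I^N$ then yields a finite subcover $\{O_{y_1},\ldots,O_{y_K}\}$ with associated directions $w_1,\ldots,w_K$, and at every point $y\in O_{y_k}\cap D$ one has $\langle w_k,\nabla f(y)\rangle>\eta/2$.

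The contradiction would come from a descent path argument. Starting from the center of $I^N$, I would build a rectifiable unit-speed curve $\gamma$ by integrating the piecewise-constant vector field that assigns $-w_k$ whenever the point lies in $O_{y_k}$ (choosing measurably when overlaps occur), and by perturbing the polygonal trajectory slightly via Fubini's theorem to guarantee that $f$ is absolutely continuous along $\gamma$ and that $f$ is differentiable at almost every point of $\gamma$. Along $\gamma$ the directional derivative is at most $-\eta/2$ almost everywhere, so $f$ decreases at rate $\geq\eta/2$ per unit length. Since the center of $I^N$ lies at distance $1$ from $\partial I^N$, the curve has length at least $1$ before exiting, giving a total decrease of $f$ of at least $\eta/2$. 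Choosing $\varepsilon<\eta/5$ then contradicts $|f(x)-f(y)|\leq 2\varepsilon$.

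The main obstacle is the rigorous execution of the descent: the separating directions $w_y$ vary with $y$, the USC radii $r_y$ may degenerate in terms of the Lipschitz constant of $f$ (which is not uniformly bounded under our hypotheses), so classical ODE theory cannot be applied directly. Following \cite[Lemma~3]{MNS}, the resolution is to work entirely with the \emph{finite} collection of directions $w_1,\ldots,w_K$ produced by compactness, to use a Vitali-type covering argument to control the overlaps of the open sets $O_{y_k}$, and to invoke Fubini to replace each idealized trajectory segment by a nearby actual segment on which $f$ is absolutely continuous and the directional derivative lower bound $\langle w_k,\nabla f\rangle>\eta/2$ remains valid along a definite length. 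The constant $\varepsilon$ emerging from this argument depends only on $\eta$ and $N$, as required.
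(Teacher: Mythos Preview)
The paper does not give its own proof of this lemma; it simply cites \cite[Lemma~3]{MNS}. Your proposal is a sketch of precisely that argument (Clarke generalized gradient plus a descent-curve contradiction), and you yourself defer the one genuinely delicate step---the rigorous construction of the descent path along which $f$ is absolutely continuous and the directional-derivative bound holds a.e.---to the same reference. So in substance your write-up and the paper's ``proof'' coincide.

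One small slip worth flagging: you invoke ``compactness of $I^N$'' to extract a finite subcover, but $I^N=(-1,1)^N$ is open. The standard repair is to run the covering argument on a closed subcube such as $[-\tfrac12,\tfrac12]^N$; the descent curve started at the origin then has length at least $\tfrac12$ before exiting this subcube, and choosing $\varepsilon<\eta/8$ (in place of your $\eta/5$) still yields the contradiction. With that adjustment the sketch is sound, though---like the paper---it is ultimately a pointer to \cite{MNS} rather than a self-contained proof.
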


\begin{proof}
See \cite[Lemma~3]{MNS}.
\end{proof}

\section{Proof of the Main Theorem}\label{proof main thm}
Fix an $n$-dimensional closed manifold $M$. We are going to consider several choices and constructions over $M$. Let $g$ be a $C^{\infty}$ Riemannian metric on $M$. Let $\varepsilon_1 > 0$ be a positive constant such that $\varepsilon_1 <\inj(M,g)$, where $\inj(M,g)$ is the injectivity radius of $(M,g)$. Let $K$ be an integer and $\hat{B}_1,...,\hat{B}_K$ be disjoint domains in $M$, with piecewise smooth boundary, such that the union of their closures covers $M$. Let $B_1,..., B_K$ be some open neighbourhoods of $\hat{B}_1,...,\hat{B}_K$ respectively with the property that each of them is contained in a geodesic ball of radius of $\varepsilon_1$. Denote $\mathcal{M}^q$ the space of all $C^q$ Riemannian metrics on $M$. For each $1\leq k\leq K$, we define a smooth function $0 \leq \phi_k \leq 1$, $\text{spt}(\phi_k) \subseteq B_k$ such that 
\begin{equation*}
    \phi_k = 
        \begin{cases}
            \text{$1$ on $\hat{B}_k$}\\
            \text{$0$ on $B_k^c$}
        \end{cases}.
\end{equation*}
Consider also the partition of unity $\psi_k = \frac{\phi_k}{\sum_{l=1}^{K} \phi_l}$. We denote 
\begin{equation*}
    \mathcal{C}_{g, \tilde{K}, \varepsilon_1} := \{(K, \{\hat{B}_k\}, \{B_k\}, \{\phi_k\})\}
\end{equation*}
the set of all possible choices as above with $K\geq\tilde{K}$. Notice that $\mathcal{C}_{g, \tilde{K}, \varepsilon_1}$ is non-empty, as we can always find a sufficiently fine triangulation of $(M,g)$. We claim that the following property holds:

\begin{proposition}\label{PropP}
Assume that the Weyl law for $1$-cycles in $n$-manifolds holds as stated in Conjecture \ref{Weyl law 1}. Then for any metric $g\in\mathcal{M}^{\infty}$, for every $\varepsilon_1 > 0$, $\tilde{K} > 0$ and any choice of 

\begin{equation*}
    S = (K, \{\hat{B}_k\}, \{B_k\}, \{\phi_k\}) \in \mathcal{C}_{g, \tilde{K}, \varepsilon_1}
\end{equation*}
there is a metric $\tilde{g}\in\mathcal{M}^{\infty}$ arbitrarily close to $g$ in the $C^{\infty}$ topology such that the following holds: there are stationary geodesic networks $\gamma_{1},...,\gamma_{J}$ with respect to $\tilde{g}$ whose connected components are nondegenerate (according to Definition \ref{nondegcomponents}) and coefficients $\alpha_{1},...,\alpha_{J}\in[0,1]$ with $\sum_{j=1}^{J}\alpha_{j}=1$ satisfying
\begin{equation}\label{EqP}
    \Big|\sum_{j=1}^{J}\alpha_{j}\dashint_{\gamma_{j}} \psi_k dL_{\tilde{g}} - \dashint_M \psi_k d\Vol_{\tilde{g}}\Big| < \frac{\varepsilon_1}{K}
\end{equation}
for every $k=1,...,K$.
\end{proposition}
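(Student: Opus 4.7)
The plan is to set up a $K$-parameter conformal family of metrics built from the partition of unity $\{\psi_{k}\}$, apply the perturbation result (Proposition \ref{Perturbation}) to get a differentiable setting, exploit the Weyl law to push the log-normalized $p$-width close to a constant, and then invoke the convex-combination lemma (Lemma \ref{gradient}) to extract the coefficients $\alpha_{j}$ and geodesic nets $\gamma_{j}$.

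More precisely, fix a small $\delta>0$ to be chosen at the end, set $I^{K}=(-\delta,\delta)^{K}$, and consider the smooth embedding $g:I^{K}\to\mathcal{M}^{\infty}$ defined by $g(t)=e^{2\sum_{k=1}^{K}t_{k}\psi_{k}}g$. Apply Proposition \ref{Perturbation} to obtain an arbitrarily small $C^{\infty}$ perturbation $g':I^{K}\to\mathcal{M}^{q}$ (with $q$ large enough) and a full measure set $\mathcal{A}\subseteq I^{K}$ on which $t\mapsto\omega_{p}^{1}(g'(t))$ is differentiable for every $p\in\mathbb{N}$ and realized by a stationary geodesic network $f_{p}(t):\Gamma\to (M,g'(t))$ whose connected components are nondegenerate and whose derivative has the integral formula of part $(2)$ of that proposition. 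Since $\tfrac{\partial g}{\partial t_{k}}(t)=2\psi_{k}\,g(t)$, a direct computation (identical to the one in the introduction) gives, for the function
\begin{equation*}
\tilde{h}_{p}(t)=-\tfrac{n-1}{n}\log p+\log\omega_{p}^{1}(g'(t))-\tfrac{1}{n}\log\Vol(M,g'(t)),
\end{equation*}
the identity
\begin{equation*}
\frac{\partial\tilde{h}_{p}}{\partial t_{k}}(t)=\dashint_{f_{p}(t)}\psi_{k}\,\dL_{g'(t)}-\dashint_{M}\psi_{k}\,\dVol_{g'(t)}\qquad(t\in\mathcal{A}).
\end{equation*}
The perturbation $g'$ can be chosen so close to $g$ in $C^{\infty}$ that its difference from the unperturbed conformal family is negligible in what follows.

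Next, by the assumed Weyl law for $1$-cycles, $\tilde{h}_{p}\to\log\alpha(n,1)$ uniformly on $I^{K}$, and the normalized widths are uniformly locally Lipschitz by \cite[Lemma~3.4]{LS}. Given $\eta=\varepsilon_{1}/K$, let $\varepsilon>0$ be the constant produced by Lemma \ref{gradient} with parameters $(\eta,K)$. For $p$ large enough the oscillation of $\tilde{h}_{p}$ on $I^{K}$ is at most $2\varepsilon$; fix such a $p$. Applying Lemma \ref{gradient} to $\tilde{h}_{p}$ and the full measure set $\mathcal{A}$ yields $K+1$ sequences $\{y_{i,m}\}_{m}\subseteq\mathcal{A}$ converging to a common limit $y\in I^{K}$ so that the gradients $\nabla\tilde{h}_{p}(y_{i,m})$ converge to vectors $v_{1},\ldots,v_{K+1}\in\mathbb{R}^{K}$ with $d_{\mathbb{R}^{K}}(0,\mathrm{Conv}(v_{1},\ldots,v_{K+1}))<\eta$. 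For each $i$, the stationary networks $f_{p}(y_{i,m})$ provided by Proposition \ref{Perturbation} share a common domain graph $\Gamma$, satisfy uniform $C^{3}$ bounds, and have the geometric quantities $F_{1},F_{2}^{(\cdot,\cdot)},d^{E}_{(\cdot)},d^{E,E'}_{(\cdot)}$ uniformly controlled; by the Arzel\`a--Ascoli / closedness argument used inside the proof of Proposition \ref{Perturbation}, we can pass to a subsequence so that $f_{p}(y_{i,m})$ converges in $C^{2}$ to a stationary geodesic network $\gamma_{i}$ on $(M,\tilde{g})$, where $\tilde{g}=g'(y)$, and $\gamma_{i}$ is embedded on each connected component. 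By bumpiness of $\tilde{g}$ (ensured by the transversality step in Proposition \ref{Perturbation}, up to shrinking $\delta$) these limit components are nondegenerate, so each $\gamma_{i}$ meets Definition \ref{nondegcomponents}.

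Passing the gradient identity to the limit $m\to\infty$ gives
\begin{equation*}
(v_{i})_{k}=\dashint_{\gamma_{i}}\psi_{k}\,\dL_{\tilde{g}}-\dashint_{M}\psi_{k}\,\dVol_{\tilde{g}},\qquad 1\leq k\leq K,\;1\leq i\leq K+1.
\end{equation*}
Pick $\alpha_{1},\ldots,\alpha_{K+1}\in[0,1]$ with $\sum_{i}\alpha_{i}=1$ realizing the distance from $0$ to $\mathrm{Conv}(v_{1},\ldots,v_{K+1})$, so that $\bigl|\sum_{i}\alpha_{i}v_{i}\bigr|<\eta=\varepsilon_{1}/K$; component by component, this is exactly the bound \eqref{EqP} with $J=K+1$. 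Finally, shrinking $\delta$ and refining the perturbation $g'$ guarantees that $\tilde{g}=g'(y)$ lies in an arbitrarily prescribed $C^{\infty}$-neighborhood of the original metric $g$.

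The main obstacle I anticipate is the limit passage for the nets $f_{p}(y_{i,m})$: to turn the convergence of gradients into a genuine equidistribution identity one needs the limit networks $\gamma_{i}$ to remain embedded and nondegenerate on each component, which is why the delicate closedness analysis for $\mathcal{B}_{\Gamma,M}$ inside Proposition \ref{Perturbation} (involving the functionals $F_{1}$, $F_{2}^{(\cdot,\cdot)}$, $d^{E}_{(\cdot)}$, $d^{E,E'}_{(\cdot)}$) is essential here. The remaining steps are essentially a bookkeeping application of the Weyl law and Lemma \ref{gradient}.
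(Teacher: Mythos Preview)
Your strategy coincides with the paper's: build the $K$-parameter conformal family from $\{\psi_{k}\}$, perturb via Proposition \ref{Perturbation}, use the Weyl law to force small oscillation, and apply Lemma \ref{gradient} to extract the convex combination. The only structural difference is that you work with the log-normalized function $\tilde{h}_{p}$ (the function sketched in the introduction), whereas the paper uses
\[
f_{2}(t)=\frac{\omega^{1}_{p}(M,g'(t))}{\alpha(n,1)p^{\frac{n-1}{n}}\Vol(M,g)^{\frac{1}{n}}}-\sum_{k}t_{k}\dashint_{M}\psi_{k}\,\dVol_{g},
\]
rescaled to $(-1,1)^{K}$ by $f_{3}(t)=\tfrac{1}{\delta}f_{2}(\delta t)$. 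Your choice makes the gradient identity land directly on $\dashint_{f_{p}(t)}\psi_{k}-\dashint_{M}\psi_{k}$, while the paper's route needs an extra application of the Weyl law at the end to convert the fixed normalizing constant $\alpha(n,1)\Vol(M,g)^{1/n}p^{(n-1)/n}$ back into $\length_{g'(s)}(\gamma_{j})$.

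Two genuine gaps remain. First, the identity $\tfrac{\partial\tilde{h}_{p}}{\partial t_{k}}(t)=\dashint_{f_{p}(t)}\psi_{k}-\dashint_{M}\psi_{k}$ is not exact: Proposition \ref{Perturbation} gives the derivative in terms of $\tfrac{\partial g'}{\partial t_{k}}$, not $2\psi_{k}g'(t)$, and the difference is of size $\varepsilon'$. The paper tracks this through constants $C_{1},\dots,C_{8}$ and in the end takes $\eta=\tfrac{\varepsilon_{1}}{2C_{8}K}$; your choice $\eta=\varepsilon_{1}/K$ leaves no room to absorb that error. Second, and more importantly, you set $\tilde{g}=g'(y)$, but Proposition \ref{Perturbation} only produces $g':I^{K}\to\mathcal{M}^{q}$ for finite $q$, so $\tilde{g}$ is a priori only $C^{q}$; the proposition asks for $\tilde{g}\in\mathcal{M}^{\infty}$. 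The paper fixes this in a final step: using the nondegeneracy of the components of the $\gamma_{j}$ and the Implicit Function Theorem (\cite[Lemma~2.6]{LS}), it replaces $g'(s)$ by a nearby $C^{\infty}$ metric $\tilde{g}$ carrying nearby stationary nets $\tilde{\gamma}_{j}$, and the strict inequality \eqref{EqP} survives this perturbation.
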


In the proof, we will need to measure the distance between two rescaled functions. In order to do that, we introduce the following definition.
\begin{definition}\label{epsilon close}
We say that two functions $f,g:(-\delta,\delta)^{K}\to\mathbb{R}$ are $\varepsilon$-close if
\begin{equation*}
    \Vert \frac{1}{\delta}f_{\delta}-\frac{1}{\delta}g_{\delta} \Vert_{\infty}<\varepsilon
\end{equation*}
where $f_{\delta},g_{\delta}:(-1,1)^{K}\to\mathbb{R}$ are given by $f_{\delta}(s)=f(\delta s)$ and $g_{\delta}(s)=g(\delta s)$.
\end{definition}

\begin{remark}\label{rkfdelta}
Observe that $\frac{1}{\delta}f_{\delta}$ is differentiable at $s\in(-1,1)^{K}$ if and only if $f$ is differentiable at $\delta s\in(-\delta,\delta)^{K}$ and in that case $\nabla(\frac{1}{\delta}f_{\delta})(s)= \nabla f(\delta s)$.
\end{remark}

\begin{proof}[Proof of Proposition \ref{PropP}]
Let $g\in\mathcal{M}^{\infty}$, $\tilde{K}\in\mathbb{N}$ and $\varepsilon_{1}>0$. Fix $(K,\{\hat{B}_{k}\},\{B_{k}\},\{\phi_{k}\})\in\mathcal{C}_{g,\tilde{K},\varepsilon_{1}}$. Let $\mathcal{U}$ be a $C^{\infty}$ neighborhood of $g$. Choose $\varepsilon'_{0} > 0$ sufficiently small and $q \geq K + 3$ sufficiently large so that if $g' \in \mathcal{M}^{\infty}$ satisfies $\Vert g - g'\Vert_{C^q} < \varepsilon'_{0}$, then $g' \in \mathcal{U}$. Let $\varepsilon'\leq\varepsilon'_{0}$ be a positive real number (which we will have to shrink later in the argument). Our goal is to show that there exists $\tilde{g}\in\mathcal{M}^{\infty}$ such that $\Vert \tilde{g}-g\Vert_{C^{q}}<\varepsilon'_{0}$ and (\ref{EqP}) holds for some stationary geodesic nets $\gamma_{1},...,\gamma_{J}$ (whose connected components are nondegenerate)  with respect to $\tilde{g}$ and some coefficients $\alpha_{1},...,\alpha_{J}$.

Consider the following $K$-parameter family of metrics. For a $t = (t_{1},...,t_{K}) \in (-1,1)^K$, we define
\begin{equation*}
    \hat{g}(t) = e^{2\sum_{k}t_{k}\psi_{k}}g.
\end{equation*}
At $t = 0$, for each $k$, we have
\begin{align*}
    \frac{\partial}{\partial t_{k}}\big|_{t=0}\Vol(M,\hat{g}(t))& =\frac{\partial}{\partial t_{k}}\big|_{t=0}\int_{M}(e^{2\sum_{k}t_{k}\psi_{k}(x)})^{\frac{n}{2}}\dVol_{g}\\
    & =\int_{M}n\psi_{k}(x)\dVol_{g}.
\end{align*}
As $t$ goes to zero, we have the following expansion
\begin{equation}\label{TayVol}
    \Vol(M,\hat{g}(t))^{\frac{1}{n}}=\Vol(M,g)^{\frac{1}{n}}+\sum_{k=1}^{K}t_{k}\Vol(M,g)^{-\frac{n-1}{n}}\int_{M}\psi_{k}(x)\dVol_{g}+R(t) 
\end{equation}
where $|R(t)|\leq C_{1}\Vert t\Vert^{2}$ if $t\in(-1,1)^{K}$, where $C_{1}$ is a constant which depends only on $g$ (this can be checked by computing the second order partial derivatives of $t\mapsto \Vol(M,\hat{g}(t))^{\frac{1}{n}}$ and using the fact that $e^{-n}\Vol(M,g)\leq\Vol(M,\hat{g}(t))\leq e^{n}\Vol(M,g)$ as $\Vol(M,\hat{g}(t))=\int_{M}e^{n\sum_{k}t_{k}\psi_{k}(x)}\dVol_{g}$). Following \cite{MNS} we can define the following function
\begin{equation*}
    f_{0}(t)=\frac{\Vol(M,\hat{g}(t))^{\frac{1}{n}}}{\Vol(M,g)^{\frac{1}{n}}}-\sum_{k=1}^{K}t_{k}\dashint_{M}\psi_{k}(x)\dVol_{g}.
\end{equation*}

Because of (\ref{TayVol}), $|f_{0}(t)-1|=|\frac{R(t)}{\Vol(M,g)^{\frac{1}{n}}}|\leq C_{2}\Vert t\Vert^{2}$ for every $t\in(-1,1)^{K}$; where $C_{2}=\frac{C_{1}}{\Vol(M,g)^{\frac{1}{n}}}$ depends only on $g$ (as $C_{1}$ and other constants $C_{i}$ to be defined later). 

By the previous, $f_{0}$ is $C_{2}\varepsilon'$-close to $1$ in $(-\delta,\delta)^{K}$ if $\delta<\varepsilon'$ (see Definition \ref{epsilon close}). Let $\delta<\varepsilon'$ be such that $\hat{g}:(-\delta,\delta)^{K}\to\mathcal{M}^{q}$ is an embedding and $\Vert\hat{g}(t)-g\Vert_{C^{q}}<\frac{\varepsilon'}{2}$ for every $t\in(-\delta,\delta)^{K}$. We can slightly perturb $\hat{g}$ in the $C^{\infty}$ topology to another embedding $g':(-\delta,\delta)^{K}\to\mathcal{M}^{q}$ applying Proposition \ref{Perturbation}. We can assume $\Vert g'(t)-\hat{g}(t)\Vert_{C^{q}}<\frac{\varepsilon'}{2}$ and $\Vert\frac{\partial g'}{\partial v}-\frac{\partial \hat{g}}{\partial v}\Vert_{C^{q}}<\varepsilon'$ for every $t\in(-\delta,\delta)^{K}$ and $v\in\mathbb{R}^{K}:|v|=1$. Consider the function
\begin{equation*}
    f_{1}(t)=\frac{\Vol(M,g'(t))^{\frac{1}{n}}}{\Vol(M,g)^{\frac{1}{n}}}-\sum_{k}t_{k}\dashint_{M}\psi_{k}(x)\dVol_{g}.
\end{equation*}
By the properties of $g'$, there exists $C_{3}>0$ such that $f_{1}$ is $C_{3}\varepsilon'$-close to the constant function equal to $1$ on $(-\delta, \delta)^{K}$.

Now we will use the assumption that the Weyl law for $1$-cycles in $n$-manifolds holds, which means that
\begin{equation}\label{Weyl's law eq}
    \lim_{p \rightarrow \infty}\omega^1_p(M^n, g)p^{-\frac{n - 1}{n}} = \alpha(n, 1)\Vol(M^n, g)^{\frac{1}{n}}.
\end{equation}

The normalized $p$-widths $p^{-\frac{n-1}{n}}\omega_p^{1}(g'(t))$ are uniformly Lipschitz continuous on $(-\delta,\delta)^{K}$ by \cite[Lemma 3.4]{LS}. Hence, by (\ref{Weyl's law eq}) the sequence of functions \linebreak $t \mapsto p^{-\frac{n-1}{n}}\omega^{1}_{p}(M,g'(t))$ converges uniformly to the function $t \mapsto a(n)\Vol(M,g'(t))^{\frac{1}{n}}$. This implies that for the previously defined $\delta>0$, there exists $p_{0}\in\mathbb{N}$ such that $p\geq p_{0}$ implies
\begin{equation*}
    |p^{-\frac{n-1}{n}}\omega_{p}^{1}(M,g'(t))-\alpha(n,1)\Vol(M,g'(t))^{\frac{1}{n}}|<\delta\varepsilon'
\end{equation*}
and hence
\begin{equation*}
    |\frac{\omega^{1}_{p}(M,g'(t))}{\alpha(n,1)p^{\frac{n-1}{n}}\Vol(M,g)^{\frac{1}{n}}}-\frac{\Vol(M,g'(t))^{\frac{1}{n}}}{\Vol(M,g)^{\frac{1}{n}}}|<C_{4}\delta\varepsilon'
\end{equation*}
for every $t\in(-\delta,\delta)^{K}$. The previous means that $h(t)=\frac{\omega^{1}_{p}(M,g'(t))}{\alpha(n,1)p^{\frac{n-1}{n}}\Vol(M,g)^{\frac{1}{n}}}-\frac{\Vol(M,g'(t))^{\frac{1}{n}}}{\Vol(M,g)^{\frac{1}{n}}}$ is $C_{4}\varepsilon'$-close to $0$ in $(-\delta,\delta)^{K}$ and therefore as $f_{1}$ is $C_{3}\varepsilon'$-close to $1$, by triangle inequality we have that
\begin{equation*}
    f_{2}(t)=\frac{\omega^{1}_{p}(M,g'(t))}{\alpha(n,1)p^{\frac{n-1}{n}}\Vol(M,g)^{\frac{1}{n}}}-\sum_{k=1}^{K}t_{k}\dashint_{M}\psi_{k}(x)\dVol_{g}
\end{equation*}
is $C_{5}\varepsilon'$-close to $1$ if $p\geq p_{0}$, for some $C_{5}>0$.

On the other hand, by our choice of $g'$ using Proposition \ref{Perturbation}, there exists a full measure subset $\mathcal{A}\subseteq(-\delta,\delta)^{K}$ such that for each $t\in\mathcal{A}$ and $p\in\mathbb{N}$ the map $t\mapsto\omega^{1}_{p}(g'(t))$ is differentiable at $t$ and there exists a stationary geodesic net $\gamma_{p}(t)$ with respect to $g'(t)$ so that
\begin{enumerate}
    \item $\omega_{p}^{1}(g'(t))=\length_{g'(t)}(\gamma_{p}(t))$
    \item $\frac{\partial}{\partial v}(\omega_{p}^{1}\circ g'(s))|_{s=t}=\frac{1}{2}\int_{\gamma_{p}(t)}\trace_{\gamma_{p}(t),g'(t)}\frac{\partial g'}{\partial v}(t)\dL_{g'(t)}$.
\end{enumerate}
Define $f_{3}:(-1,1)^{K}\to\mathbb{R}$ as $f_{3}(t)=\frac{1}{\delta}f_{2}(\delta t)$. We know that $\Vert f_{3}-1\Vert_{\infty,(-1,1)^{K}}<C_{5}\varepsilon'$. Now we want to use Lemma \ref{gradient}. In order to do that we will need to impose more restrictions on $\varepsilon'$. Let $\eta>0$. Let $\varepsilon>0$ be the one depending on $\eta$ and $N=K$ according to Lemma \ref{gradient}. Choose $\varepsilon'$ small enough so that $C_{5}\varepsilon'<\varepsilon$, $\varepsilon'<\eta$ and $\varepsilon'\leq\varepsilon'_{0}$. Observe that this allows us to define $\delta>0$ and $p_{0}\in\mathbb{N}$ with all the properties in the construction above. Then we have
\begin{equation*}
    |f_{3}(x)-f_{3}(y)|\leq 2\varepsilon
\end{equation*}
for every $x,y\in(-1,1)^{K}$. As $f_{3}$ is Lipschitz, we can apply Lemma \ref{gradient} to $f_{3}$ and the full measure subset $\mathcal{A'}=\{\frac{t}{\delta}:t\in\mathcal{A}\}$. After passing to $(-\delta,\delta)^{K}$ by rescaling and using Remark \ref{rkfdelta}, we get $K+1$ sequences of points $\{s_{1,m}\}_{m},...,\{s_{K+1,m}\}_{m\in\mathbb{N}}$ contained in $\mathcal{A}$ and converging to a common limit $s\in(-\delta,\delta)^{K}$ such that:
\begin{enumerate}
    \item $f_{2}$ is differentiable at each $s_{j,m}$.
    \item The gradients $\nabla f_{2}(s_{j,m})$ converge to $K+1$ vectors $v_{1},...,v_{K+1}$ with
    \begin{equation*}
        d_{\mathbb{R}^{N}}(0,\text{Conv}(v_{1},...,v_{K+1}))<\eta.
    \end{equation*}
\end{enumerate}

Let $\alpha_{1},...,\alpha_{K+1}\in[0,1]$ be such that $\sum_{j=1}^{K+1}\alpha_{j}=1$ and $|\sum_{j=1}^{K+1}\alpha_{j}v_{j}|<\eta$. Then if $m$ is sufficiently large,
\begin{equation*}
    |\sum_{j=1}^{K+1}\alpha_{j}\nabla f_{2}(s_{j,m})|<\eta
\end{equation*}
and hence
\begin{equation*}
    |\sum_{j=1}^{K+1}\alpha_{j}\frac{\partial f_{2}}{\partial t_{k}}(s_{j,m})|<\eta
\end{equation*}
for every $k=1,...,K$. But using the definition of $f_{2}$ and denoting $\gamma_{j,m}=\gamma_{p}(s_{j,m})$,

\begin{align*}
    \frac{\partial f_{2}}{\partial t_{k}}(s_{j,m}) & =\frac{\frac{\partial}{\partial t_{k}}\omega_{p}^{1}(M,g'(s))|_{s=s_{j,m}}}{\alpha(n,1)\Vol(M,g)^{\frac{1}{n}}p^{\frac{n-1}{n}}}-\dashint_{M}\psi_{k}(x)\dVol_{g}\\
    & = \frac{\int_{\gamma_{j,m}}\trace_{\gamma_{j,m},g'(s_{j,m})}\frac{\partial g'}{\partial t_{k}}(s_{j,m})\dL_{g'(s_{j,m})}}{2\alpha(n,1)\Vol(M,g)^{\frac{1}{n}}p^{\frac{n-1}{n}}}-\dashint_{M}\psi_{k}(x)\dVol_{g}.
\end{align*}

As the lengths $\length_{g'(s_{j,m})}(\gamma_{j,m})=\omega_{p}(g'(s_{j,m}))$ of the $\gamma_{j,m}$'s are uniformly bounded, passing to a subsequence we can obtain stationary geodesic networks $\gamma_{1},...,\gamma_{K+1}$ with respect to $g'(s)$ verifying
\begin{equation}\label{limit nets}
    \lim_{m\to\infty}\gamma_{j,m}=\gamma_{j}
\end{equation}
in the varifold topology for every $j=1,...,K+1$. Hence from the previous,

\begin{equation*}
    |\sum_{j=1}^{K+1}\alpha_{j}\frac{\int_{\gamma_{j}}\trace_{\gamma_{j},g'(s)}\frac{\partial g'}{\partial t_{k}}(s)\dL_{g'(s)}}{2\alpha(n,1)\Vol(M,g)^{\frac{1}{n}}p^{\frac{n-1}{n}}}-\dashint_{M}\psi_{k}(x)\dVol_{g}|\leq \eta
\end{equation*}
for every $k=1,...,K$. Using that $\Vert\hat{g}(t)-g\Vert_{C^{q}}<\frac{\varepsilon'}{2}$, $\Vert g'(t)-\hat{g}(t)\Vert_{C^{q}}<\frac{\varepsilon'}{2}$ and $\Vert\frac{\partial g'}{\partial v}-\frac{\partial \hat{g}}{\partial v}\Vert_{C^{q}}<\varepsilon'$ for every $t\in(-\delta,\delta)^{K}$ and $v\in\mathbb{R}^{K}:|v|=1$; and the fact that $\varepsilon'<\eta$, we can see that there exists a constant $C_{6}>0$ such that

\begin{equation*}
    |\sum_{j=1}^{K+1}\alpha_{j}\frac{\int_{\gamma_{j}}{\trace_{\gamma_{j},\hat{g}(s)}\frac{\partial \hat{g}}{\partial t_{k}}(s)}\dL_{\hat{g}(s)}}{2\alpha(n,1)\Vol(M,g'(s))^{\frac{1}{n}}p^{\frac{n-1}{n}}}-\dashint_{M}\psi_{k}(x)\dVol_{g'(s)}|\leq C_{6}\eta.
\end{equation*}
By definition of $\hat{g}$, $\frac{\partial\hat{g}}{\partial t_{k}}(s)=2\psi_{k}\hat{g}(s)$ thus
\begin{align}\label{Eq1}
    |\sum_{j=1}^{K+1}\alpha_{j}\frac{\int_{\gamma_{j}}\psi_{k}\dL_{\hat{g}(s)}}{\alpha(n,1)\Vol(M,g'(s))^{\frac{1}{n}}p^{\frac{n-1}{n}}}-\dashint_{M}\psi_{k}(x)\dVol_{g'(s)}|\leq C_{6}\eta.
\end{align}
Combining (\ref{Eq1}) with the fact that $\Vert g'(s)-\hat{g}(s)\Vert_{C^{q}}<\frac{\varepsilon'}{2}$,
\begin{equation}\label{Eq2}
    |\sum_{j=1}^{K+1}\alpha_{j}\frac{\int_{\gamma_{j}}\psi_{k}\dL_{g'(s)}}{\alpha(n,1)\Vol(M,g'(s))^{\frac{1}{n}}p^{\frac{n-1}{n}}}-\dashint_{M}\psi_{k}(x)\dVol_{g'(s)}|\leq C_{7}\eta.
\end{equation}
But we know that $\L_{g'(s)}(\gamma_{j})=\omega_{p}^{1}(g'(s))$ for every $j=1,...,K+1$, so
\begin{align*}
    |\frac{\int_{\gamma_{j}}\psi_{k}\dL_{g'(s)}}{\alpha(n,1)\Vol(M,g'(s))^{\frac{1}{n}}p^{\frac{n-1}{n}}}-\dashint_{\gamma_{j}}\psi_{k}\dL_{g'(s)}| & = \\
    |\dashint_{\gamma_{j}}\psi_{k}\dL_{g'(s)}||\frac{\omega_{p}^{1}(g'(s))}{\alpha(n,1)\Vol(M,g'(s))^{\frac{1}{n}}p^{\frac{n-1}{n}}}-1| &\leq\\
    |\frac{\omega_{p}^{1}(g'(s))}{\alpha(n,1)\Vol(M,g'(s))^{\frac{1}{n}}p^{\frac{n-1}{n}}}-1| & \leq \eta
\end{align*}
if $p\geq p_{1}$ for some $p_{1}\in\mathbb{N}$, because of the Weyl law and the fact $0\leq\psi_{k}\leq 1$. Hence from (\ref{Eq2}),
\begin{equation*}
    |\sum_{j=1}^{K+1}\alpha_{j}\dashint_{\gamma_{j}}\psi_{k}\dL_{g'(s)}-\dashint_{M}\psi_{k}\dVol_{g'(s)}|\leq C_{8}\eta
\end{equation*}
for some constant $C_{8}$ depending only on $g$. Let us take $\eta=\frac{\varepsilon_{1}}{2C_{8}K}$ and $p\geq\max\{p_{0},p_{1}\}$. 


Notice that $\Vert g'(s)-g\Vert_{C^{q}}\leq\Vert g'(s)-\hat{g}(s)\Vert_{C^{q}}+\Vert\hat{g}(s)-g\Vert_{C^{q}}<\frac{\varepsilon'}{2}+\frac{\varepsilon'}{2}<\varepsilon_{0}'$. Let us represent each $\gamma_{i}$ as a map $\gamma_{i}:\Gamma_{i}\to M$ where each connected component of the weighted multigraph $\Gamma_{i}$ is good and the restrictions of $\gamma_{i}$ to those connected components are embedded (here we are using Remark \ref{unionofregular}). The metric $g'(s)$ has all the properties required by Proposition \ref{PropP} except that the components of the $\gamma_{i}$'s may not be non-degenerate and may not be $C^{\infty}$ (in principle they are only $C^{q}$). Using Proposition \ref{Perturbation nondeg}, we can change $g'(s)$ for another $C^{q}$ metric $\overline{g}$ which still verifies $\Vert\overline{g}-g\Vert_{C^{q}}<\varepsilon_{0}'$, and has the property that $\gamma_{1},...,\gamma_{K+1}$ are non-degenerate stationary geodesic nets with respect to $\overline{g}$. If on top of that we apply the Implicit Function Theorem (see \cite[Lemma~2.6]{LS}), we can find a $C^{\infty}$ metric $\tilde{g}$ close enough to $\overline{g}$ in the $C^{q}$ topology so that $\Vert\tilde{g}-g\Vert_{C^{q}}<\varepsilon_{0}$ which admits stationary geodesic networks $\tilde{\gamma}_{1},...,\tilde{\gamma}_{k+1}$ whose connected components are nondegenerate and verify
\begin{equation*}
    |\sum_{j=1}^{K+1}\alpha_{j}\dashint_{\tilde{\gamma}_{j}}\psi_{k}\dL_{\tilde{g}}-\dashint_{M}\psi_{k}\dVol_{\tilde{g}}|< \frac{\varepsilon_{1}}{K}
\end{equation*}
for every $k=1,...,K+1$. This completes the proof.


\end{proof}

Now we will show that Proposition \ref{PropP} implies Theorem \ref{thm3}. Given $g\in\mathcal{M}^{\infty}$, $\varepsilon_{1}>0$, $\tilde{K}\in\mathbb{N}$ and $S\in\mathcal{C}_{g,\tilde{K},\varepsilon_{1}}$ we will denote $\mathcal{M}(g,\varepsilon_{1},\tilde{K},S)$ the set of all metrics $\tilde{g}\in\mathcal{M}^{\infty}$ such that $\Vert\tilde{g}-g\Vert_{C^{q}}<\varepsilon_{1}$ (computed with respect to $g$) and there exist stationary geodesic networks $\gamma_{1},...,\gamma_{J}$ with respect to $\tilde{g}$ whose connected components are nondegenerate (according to Definition \ref{nondegcomponents}) and coefficients $\alpha_{1},...,\alpha_{J}\in[0,1]$ with $\sum_{j=1}^{J}\alpha_{j}=1$ such that (\ref{EqP}) holds for every $k=1,...,K$. By the Implicit Function Theorem, $\mathcal{M}(g,\varepsilon_{1},\tilde{K},S)$ is open (see \cite[Lemma~2.6]{LS}). Therefore given $\varepsilon_{1}>0$ and $\tilde{K}\in\mathbb{N}$ the set
\begin{equation*}
    \mathcal{M}(\varepsilon_{1},\tilde{K})=\bigcup_{g\in\mathcal{M}^{\infty}}\bigcup_{S\in\mathcal{C}_{g,\varepsilon_{1},\tilde{K}}}\mathcal{M}(g,\varepsilon_{1},K,S)
\end{equation*}
is open and by Proposition \ref{PropP} it is also dense in $\mathcal{M}^{\infty}$. Define
\begin{equation*}
    \tilde{\mathcal{M}}=\bigcap_{m\in\mathbb{N}}\mathcal{M}(\frac{1}{m},m)
\end{equation*}
which is a generic subset of $\mathcal{M}^{\infty}$ in the Baire sense. We are going to prove that if $\tilde{g}\in\tilde{\mathcal{M}}$ then there exists a sequence of equidistributed stationary geodesic networks with respect to $\tilde{g}$.

Fix $\tilde{g}\in\tilde{\mathcal{M}}$. By definition, given $m\in\mathbb{N}$ there exists $g\in\mathcal{M}^{\infty}$ such that $\tilde{g}\in\mathcal{M}(g,\frac{1}{m},m,S)$ for some $S\in\mathcal{C}_{g,\frac{1}{m},m}$. Therefore, $\tilde{g}$ belongs to a $\frac{1}{m}$ neighborhood of $g$ in the $C^{K}$ topology; and there exist $J=J_{m}\in\mathbb{N}$, stationary geodesic networks $\gamma_{m,1},...,\gamma_{m,J_{m}}$ with respect to $\tilde{g}$ and coefficients $\alpha_{m,1},...,\alpha_{m,J_{m}}\in[0,1]$ with $\sum_{j=1}^{J_{m}}\alpha_{m,j}=1$ satisfying
\begin{equation}\label{Eq3}
    |\sum_{j=1}^{J_{m}}\alpha_{m,j}\dashint_{\gamma_{m,j}}\psi_{k}(x)\dL_{\tilde{g}}-\dashint_{M}\psi_{k}(x)\dVol_{\tilde{g}}|<\frac{1}{mK}
\end{equation}
for every $k=1,...,K$. Let $f\in C^{\infty}(M,\mathbb{R})$. We want to obtain a formula analogous to the previous one but replacing $\psi_{k}$ by $f$, which will imply the following proposition.

\begin{proposition}\label{Propmj}
Let $\tilde{g}\in\tilde{\mathcal{M}}$. For each $m\in\mathbb{N}$, there exists $J=J_{m}$ depending on $m$, integers $\{c_{m,j}\}_{1\leq j\leq J_{m}}$ and stationary geodesic networks $\{\gamma_{m,j}\}_{1\leq j\leq J_{m}}$ such that
\begin{equation*}
    |\frac{\sum_{j = 1}^{J_{m}}c_{m,j}\int_{\gamma_{m,j}}f\dL_{\tilde{g}}}{\sum_{j = 1}^{J_m}c_{m,j}\length_{\tilde{g}}(\gamma_{m,j})}- \dashint_{M}f\dVol_{\tilde{g}}|\leq\frac{D(f)}{m}
\end{equation*}
for every $f\in C^{\infty}(M,\mathbb{R})$, where $D(f)>0$ is a constant depending only on $f$ and the metric $\tilde{g}$.
\end{proposition}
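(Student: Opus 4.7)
The proposition follows by combining estimate \eqref{Eq3} with two independent approximation steps: first, replacing an arbitrary $f\in C^\infty(M,\R)$ by a linear combination of the $\psi_k$ from the configuration $S\in\mathcal{C}_{g_m,1/m,m}$, and second, replacing the real weights $\alpha_{m,j}$ by normalized integer weights. The hypothesis $\tilde g\in\tilde{\mathcal M}$ gives, for each $m\in\mathbb N$, some $g_m\in\mathcal M^\infty$ and a configuration $(K_m,\{\hat B_k\},\{B_k\},\{\phi_k\})\in\mathcal C_{g_m,1/m,m}$ with $K_m\geq m$ and each $B_k$ inside a $g_m$-geodesic ball of radius $1/m$; the $C^q$-closeness of $\tilde g$ to $g_m$ then gives each $B_k$ a $\tilde g$-diameter of order $1/m$.

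For the function-approximation step, pick a point $x_k\in\hat B_k$, set $a_k:=f(x_k)$, and use $\sum_k\psi_k=1$ together with the mean value theorem to obtain $\|f-\sum_k a_k\psi_k\|_\infty\leq C\|\nabla f\|_{\tilde g}/m$ for a universal constant $C$. Multiplying \eqref{Eq3} by $a_k$ and summing over $k=1,\ldots,K_m$ absorbs the $K_m$ appearing in the right-hand side $1/(mK_m)$, producing an error of $\|f\|_\infty/m$ for $\sum_k a_k\psi_k$ in place of $\psi_k$; swapping $\sum_k a_k\psi_k$ back for $f$ on both sides of the resulting inequality costs at most $2C\|\nabla f\|_{\tilde g}/m$ more. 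This yields the real-coefficient bound
\[\Bigl|\sum_{j=1}^{J_m}\alpha_{m,j}\dashint_{\gamma_{m,j}}f\dL_{\tilde g}-\dashint_M f\dVol_{\tilde g}\Bigr|\leq\frac{D_0(f)}{m},\]
with $D_0(f)$ depending only on $\|f\|_\infty$ and $\|\nabla f\|_{\tilde g}$.

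For the integer-approximation step, choose a large $N\in\mathbb N$ and set $c_{m,j}:=\lfloor N\alpha_{m,j}/\length_{\tilde g}(\gamma_{m,j})\rfloor$ (discarding indices with $\alpha_{m,j}=0$). Letting $\beta_{m,j}:=c_{m,j}\length_{\tilde g}(\gamma_{m,j})/\sum_k c_{m,k}\length_{\tilde g}(\gamma_{m,k})$, one checks directly that
\[\sum_{j}\beta_{m,j}\dashint_{\gamma_{m,j}}f\dL_{\tilde g}=\frac{\sum_j c_{m,j}\int_{\gamma_{m,j}}f\dL_{\tilde g}}{\sum_j c_{m,j}\length_{\tilde g}(\gamma_{m,j})},\]
and that $\beta_{m,j}\to\alpha_{m,j}$ as $N\to\infty$. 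Choosing $N$ large enough that $\max_j|\beta_{m,j}-\alpha_{m,j}|\leq 1/(J_m m)$ adds at most $\|f\|_\infty/m$ to the error and is uniform in $f$, giving the total bound $D(f)/m$ with $D(f):=D_0(f)+\|f\|_\infty$.

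The mild subtlety worth flagging is that the $c_{m,j}$ must be fixed once and for all before being tested against $f$, so the smallness of $|\beta_{m,j}-\alpha_{m,j}|$ has to be quantified $f$-uniformly---this is precisely why we request the explicit bound $1/(J_m m)$. No genuine analytical obstacle arises: all the real analysis is already packaged inside Proposition \ref{PropP}, and the proof amounts to a bookkeeping combination of \eqref{Eq3} with the two elementary approximations just described.
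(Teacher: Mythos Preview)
Your proposal is correct and follows essentially the same approach as the paper: both arguments first approximate $f$ by $\sum_k f(x_k)\psi_k$ using the mean value theorem and the small $\tilde g$-diameter of the $B_k$, then multiply \eqref{Eq3} by the coefficients and sum to obtain the real-weight estimate, and finally replace the $\alpha_{m,j}$ by rational approximations of $\alpha_{m,j}/\length_{\tilde g}(\gamma_{m,j})$ with a common denominator chosen uniformly in $f$. Your packaging of the last step via the normalized weights $\beta_{m,j}$ is slightly cleaner than the paper's two-stage comparison through $\sum_j(c_{m,j}/d_m)\int_{\gamma_{m,j}}f$, but the content is identical.
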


\begin{proof}
Given $m\in\mathbb{N}$, consider as above $g\in\mathcal{M}^{\infty}$ and $S\in\mathcal{C}_{g,\frac{1}{m},m}$ such that $\tilde{g}\in\mathcal{M}(g,\frac{1}{m},m,S)$. Define $J_{m}\in\mathbb{N}$, stationary geodesic networks $\gamma_{m,1},...,\gamma_{m,J_{m}}$ with respect to $\tilde{g}$ and coefficients $\alpha_{m,1},...,\alpha_{m,J_{m}}$ such that (\ref{Eq3})  holds. Taking $S=(K,\{\hat{B}_{k}\}_{k},\{B_{k}\}_{k},\{\phi_{k}\}_{k})\in\mathcal{C}_{g,\frac{1}{m},m}$ into account, let us choose points $q_{1},...,q_{K}$ with $q_{k}\in\hat{B}_{k}$ for each $k=1,...,K$. The idea will be to approximate the integral of $f(x)$ by the integral of the function $\sum_{k=1}^{K}f(q_{k})\psi_{k}(x)$. First of all, by using (\ref{Eq3}) we can see that
\begin{equation}\label{Eq4}
    |\sum_{j=1}^{J_{m}}\alpha_{m,j}\dashint_{\gamma_{m,j}}[\sum_{k=1}^{K}f(q_{k})\psi_{k}(x)]d\length_{\tilde{g}}-\dashint_{M}[\sum_{k=1}^{K}f(q_{k})\psi_{k}(x)]d\Vol_{\tilde{g}}|<\frac{D_{1}}{m}
\end{equation}
where $D_{1}=\Vert f\Vert_{\infty}=\max\{f(x):x\in M\}$ depends only on $f$ (and not on $m$, $g$ or $S$). On the other hand, given $x\in M$
\begin{align*}
    |f(x)-\sum_{k=1}^{K}f(q_{k})\psi_{k}(x)| & =|f(x)\sum_{k=1}^{K}\psi_{k}(x)-\sum_{k=1}^{K}f(q_{k})\psi_{k}(x)|\\
    & = |\sum_{k=1}^{K}f(x)\psi_{k}(x)-f(q_{k})\psi_{k}(x)|\\
    & \leq \sum_{k:x\in B_{k}}|f(x)-f(q_{k})||\psi_{k}(x)|\\
    & = \sum_{k:x\in B_{k}}|\nabla_{\tilde{g}} f(c_{k})|d_{\tilde{g}}(x,q_{k})\psi_{k}(x)\\
    &\leq \frac{2\Vert\nabla_{\tilde{g}} f\Vert_{\infty}}{m}\sum_{k=1}^{K}\psi_{k}(x)\\
    & =\frac{2\Vert\nabla_{\tilde{g}} f\Vert_{\infty}}{m}.
\end{align*}
We used the Mean Value Theorem and the fact that $\text{supp}(\psi_{k})\subseteq B_{k}$ and $\text{diam}_{\tilde{g}}(B_{k})\leq 2\text{diam}_{g}(B_{k})\leq\frac{2}{m}$ for every $i$. Combining this and (\ref{Eq4}) we get

\begin{equation}\label{Eq5}
    |\sum_{j=1}^{J_{m}}\alpha_{m,j}\dashint_{\gamma_{m,j}}f\dL_{\tilde{g}}-\dashint_{M}f\dVol_{\tilde{g}}|<\frac{D_{2}}{m}
\end{equation}
where $D_{2}$ depends only on $f$ and $\tilde{g}$. Let us choose integers $c_{m,j},d_{m}\in\mathbb{N}$ such that
\begin{equation*}
    |\frac{\alpha_{m,j}}{\length_{\tilde{g}}(\gamma_{m,j})}-\frac{c_{m,j}}{d_{m}}|<\frac{1}{mJ_{m}\length_{\tilde{g}}(\gamma_{m.j})}.
\end{equation*}
Then it holds
\begin{align*}
    |\sum_{j=1}^{J_{m}}\alpha_{m,j}\dashint_{\gamma_{m,j}}f\dL_{\tilde{g}}-\sum_{j=1}^{J_{m}}\frac{c_{m,j}}{d_{m}}\int_{\gamma_{m,j}}f\dL_{\tilde{g}}|& \leq\sum_{j=1}^{J_{m}}|\frac{\alpha_{m,j}}{\length_{\tilde{g}(\gamma_{m.j})}}-\frac{c_{m_{j}}}{d_{m}}||\int_{\gamma_{m,j}}f\dL_{\tilde{g}}|\\
    &\leq \sum_{j=1}^{J_{m}}\frac{1}{mJ_{m}\length_{\tilde{g}}(\gamma_{m,j})}\Vert f\Vert_{\infty}\length_{\tilde{g}}(\gamma_{m,j})\\
    &= \frac{D_{1}}{m}
\end{align*}
and hence by (\ref{Eq5}) and triangle inequality we get
\begin{equation*}
    |\sum_{j=1}^{K_{m}}\frac{c_{m,j}}{d_{m}}\int_{\gamma_{m,j}}f\dL_{\tilde{g}}-\dashint_{M}f\dVol_{\tilde{g}}|<\frac{D_{3}}{m}
\end{equation*}
where $D_{3}=D_{2}+D_{1}$ depends only on $f$ and $\tilde{g}$. On the other hand,

\begin{align*}
    |\sum_{j=1}^{J_{m}}\frac{c_{m,j}}{d_{m}}\int_{\gamma_{m,j}}f\dL_{\tilde{g}}-\frac{\sum_{j=1}^{J_{m}}c_{m,j}\int_{\gamma_{m,j}}f\dL_{\tilde{g}}}{\sum_{j=1}^{J_{m}}c_{m,j}\length_{\tilde{g}}(\gamma_{m,j})}| &  \leq\\
    |\frac{1}{d_{m}}-\frac{1}{\sum_{j=1}^{J_{m}}c_{m,j}\length_{\tilde{g}}(\gamma_{m,j})}||\sum_{j=1}^{J_{m}}c_{m,j}\int_{\gamma_{m,j}}f\dL_{\tilde{g}}| & \leq\\
    |\frac{1}{d_{m}}-\frac{1}{\sum_{j=1}^{J_{m}}c_{m,j}\length_{\tilde{g}}(\gamma_{m,j})}|\sum_{j=1}^{J_{m}}c_{m,j}\Vert f\Vert_{\infty}\length_{\tilde{g}}(\gamma_{m,j})& =\\
    D_{1}|\sum_{j=1}^{J_{m}}\frac{c_{m,j}}{d_{m}}\length_{\tilde{g}}(\gamma_{m,j})-1| &\leq\frac{D_{1}}{m}
\end{align*}
because $|\sum_{j=1}^{J_{m}}\frac{c_{m,j}}{d_{m}}\length_{\tilde{g}}(\gamma_{m,j})-1|<\frac{1}{m}$. Hence
\begin{equation*}
    |\frac{\sum_{j = 1}^{J_m}c_{m,j}\int_{\gamma_{m,j}}f\dL_{\tilde{g}}}{\sum_{j = 1}^{J_m}c_{m,j}\length_{\tilde{g}}(\gamma_{m,j})}- \dashint_{M}f\dVol_{\tilde{g}}|\leq\frac{D_{4}}{m}
\end{equation*}
for a constant $D_{4}$ depending only on $f$ and $\tilde{g}$, as desired.
\end{proof}

Given $\tilde{g}\in\tilde{\mathcal{M}}$, using Proposition \ref{Propmj} we can find a sequence of finite lists of connected embedded stationary geodesic nets $\{\beta_{m, 1},..., \beta_{m, K_{m}}\}_{m \in \mathbb{N}}$ with respect to $\tilde{g}$ satisfying the following: given $f\in C^{\infty}(M,\mathbb{R})$, if we denote $X_{m,j} = \int_{\beta_{m,j}}f\dL_{\tilde{g}}$ and $\Bar{X}_{m,j} = \length_{\tilde{g}}(\beta_{m,j})$, then
\begin{equation}
    |\frac{\sum_{j = 1}^{K_{m}}X_{m,j}}{\sum_{j = 1}^{K_{m}}\Bar{X}_{m,j}} - \alpha| \leq\frac{D(f)}{m}
\end{equation}
where $\alpha = \dashint_{M}f\dVol_{\tilde{g}}$ and $D(f)$ is a constant depending only on $f$. The lists $\{\beta_{m,j}\}_{1\leq j\leq K_{m}}$ are obtained from the lists $\{\gamma_{m,j}\}_{1\leq j\leq J_{m}}$ and the coefficients $\{c_{m,j}\}_{1\leq j\leq J_{m}}$ from Proposition \ref{Propmj} by decomposing each $\gamma_{m,j}$ as a union of embedded stationary geodesic networks whose domain is a good weighted multigraph (see Remark \ref{unionofregular}) and listing each of them $c_{m,j}$ times. From the $X_{m,j}$'s and the $\Bar{X}_{m,j}$'s, we want to construct two sequences $\{Y_i\}_{i \in \mathbb{N}}, \{\Bar{Y}_i\}_{i \in \mathbb{N}}$ such that
\begin{itemize}
    \item For all $i$, there exist integers $m(i), j(i)$ (chosen independently of $f$) with $Y_{i} = X_{m(i),j(i)}$ and $\Bar{Y}_i = \Bar{X}_{m(i),j(i)}$,
    \item It holds
    \begin{equation*}
        \lim_{k \rightarrow \infty}\frac{\sum_{i = 1}^k Y_i}{\sum_{i = 1}^k \Bar{Y}_i} = \alpha.
    \end{equation*}
\end{itemize}

This can be done as in \cite[p.~437-439]{MNS} and gives us a sequence $\{\gamma_{i}\}_{i\in\mathbb{N}}$ of connected embedded stationary geodesic networks with respect to $\tilde{g}$ (defined as $\gamma_{i}=\beta_{m(i),j(i)}$), which is constructed independently of the constant $D(f)$. It holds

\begin{equation*}
    \lim_{k\to\infty}\frac{\sum_{i=1}^{k}\int_{\gamma_{i}}f\dL_{\tilde{g}}}{\sum_{i=1}^{k}\length_{\tilde{g}}(\gamma_{i})}=\dashint_{M}f\dVol_{\tilde{g}}
\end{equation*}
for every $f\in C^{\infty}(M,\mathbb{R})$. This gives us the desired equidistribution result and completes the proof of Theorem \ref{thm3}.

\begin{remark}
Observe that combining the Weyl law for $1$-cycles in $3$-manifolds from \cite{GL22} with Theorem \ref{thm3} which we just proved, we obtain Theorem \ref{thm2}.
\end{remark}

\section{Equidistribution of almost embedded closed geodesics in 2-manifolds}\label{closed geodesics}

In this section we show that the proof of Theorem \ref{thm3} combined with the work of Chodosh and Mantoulidis in \cite{Chodosh} (where they show that the $p$-widths on a surface are realized by collections of almost embedded closed geodesics) imply Theorem \ref{thm1}. The strategy to show this result will be to follow the proof of Theorem \ref{thm3} replacing ``embedded stationary geodesic network'' by ``almost embedded closed geodesic''. The main change needed in the proof is the following version of Proposition \ref{Perturbation}:

\begin{proposition}
Let $M$ be a closed $2$-manifold. Let $g:I^{N}\to\mathcal{M}^{q}$ be a smooth embedding, $N\in\mathbb{N}$. If $q\geq N+3$, there exists an arbitrarily small perturbation in the $C^{\infty}$ topology $g':I^{N}\to\mathcal{M}^{q}$ such that there is a full measure subset $\mathcal{A}\subseteq I^{N}$ with the following property: for any $p\in\mathbb{N}$ and any $t\in\mathcal{A}$, the function $s\mapsto\omega^{1}_{p}(g'(s))$ is differentiable at $t$ and there exist almost embedded closed geodesics $\gamma_{p}^{1},...,\gamma_{p}^{P}:S^{1}\to M$ such that the following two conditions hold
\begin{enumerate}
    \item $\omega_{p}^{1}(g'(t))=\sum_{i=1}^{P}L_{g'(t)}(\gamma_{p}^{i}(t))$.
    \item $\frac{\partial}{\partial v}(\omega^{1}_{p}\circ g')\big|_{s=t}=\frac{1}{2}\sum_{i=1}^{P}\int_{\gamma_{p}^{i}}\trace_{\gamma_{p}^{i},g'(t)}\frac{\partial g'}{\partial v}(t)\dL_{g'(t)}$.
\end{enumerate}
\end{proposition}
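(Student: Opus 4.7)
The plan is to adapt the proof of Proposition \ref{Perturbation} by replacing ``embedded stationary geodesic network on a good weighted multigraph'' with ``finite weighted collection of almost embedded closed geodesics''. The key external input is the theorem of Chodosh and Mantoulidis \cite{Chodosh}, which plays the role of Remark \ref{unionofregular}: on a closed $2$-manifold, every $p$-width is realized by a finite weighted union of almost embedded closed geodesics. Combinatorially, each such realizer is specified by a tuple $\Gamma = (P; m_1, \ldots, m_P)$ consisting of a number of components $P$ and integer multiplicities $m_i\in\mathbb{N}$; there are countably many such tuples.

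For each tuple $\Gamma$, let $\mathcal{S}^q(\Gamma)$ denote the space of pairs $(g, ([\gamma_1],\ldots,[\gamma_P]))$ where each $\gamma_i$ is an almost embedded closed geodesic with respect to $g$. White's structure theorem \cite{White}, applied componentwise, shows that $\mathcal{S}^q(\Gamma)$ is a second countable Banach manifold and that the projection $\Pi_\Gamma:\mathcal{S}^q(\Gamma)\to\mathcal{M}^q$ is Fredholm of index $0$, with critical points precisely those pairs having a nontrivial Jacobi field on some component. I would then apply Smale's transversality theorem to perturb $g:I^N\to\mathcal{M}^q$ to $g'$ which is transverse to every $\Pi_\Gamma$, and use Sard together with Rademacher's theorem to obtain a full measure subset $\tilde{\mathcal{A}}\subseteq I^N$ of common regular values at which $s\mapsto\omega_p^1(g'(s))$ is differentiable; transversality guarantees that $g'(t)$ is bumpy for every $t\in\tilde{\mathcal{A}}$ in the sense that every closed geodesic with respect to $g'(t)$ is nondegenerate.

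Next, I would introduce auxiliary functionals adapted to the almost embedded setting: $F_1$ as in Proposition \ref{Perturbation} (to preserve the immersion property in the limit), an angle functional
\begin{equation*}
    F_2^{\mathrm{tr}}(g,\gamma) = \max\left\{\left|\frac{g(\dot\gamma(s),\dot\gamma(t))}{|\dot\gamma(s)|_g \, |\dot\gamma(t)|_g}\right| : s\neq t \in S^1,\ \gamma(s)=\gamma(t)\right\}
\end{equation*}
bounded above by $1-\delta$ to keep self-intersections transverse, and inter-component distance functionals $d^{i,j}$ analogous to $d^{E,E'}$ to prevent distinct components from collapsing into each other. Define $\mathcal{B}_{\Gamma,M}\subseteq I^N$ to be the set of $t$ for which $\omega_p^1(g'(t))$ is realized by a weighted collection of combinatorial type $\Gamma$ with $C^3$ bound $M$ and with the above functionals bounded by $1/M$. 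By the Chodosh--Mantoulidis theorem, $I^N=\bigcup_{\Gamma,M}\mathcal{B}_{\Gamma,M}$, and a standard Arzela--Ascoli argument shows each $\mathcal{B}_{\Gamma,M}$ is closed.

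Finally, at a Lebesgue density-one point $t\in\tilde{\mathcal{A}}\cap\mathcal{B}_{\Gamma,M}$, I would repeat the argument of Proposition \ref{Perturbation} essentially verbatim: approach $t$ from $\tilde{\mathcal{A}}_{\Gamma,M}$ along arbitrary unit directions $v$, use the implicit function theorem (available because $g'(t)$ is bumpy, so each $\Pi_\Gamma$ is a local diffeomorphism near the realizer) to smoothly track the realizers near $t$, apply Lemma \ref{Diff} componentwise to identify the directional derivative with $\tfrac12 \sum_i \int_{\gamma_p^i}\mathrm{tr}_{\gamma_p^i,g'(t)}\tfrac{\partial g'}{\partial v}(t)\,\mathrm{dL}_{g'(t)}$, and invoke Lemma \ref{lcountable} to pin down a single realizer that computes $\partial_v \omega_p^1$ along a full basis of $\mathbb{R}^N$; linearity of directional derivatives then yields the formula for every unit $v$. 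The main obstacle I expect is verifying that a $C^2$ limit of almost embedded closed geodesics with uniformly bounded angle functional $F_2^{\mathrm{tr}}$ is again almost embedded rather than merely immersed with possibly tangential self-intersections, since this is what keeps us in the regime where White's structure theorem and the implicit function theorem produce a smooth branch of realizers; this compactness step should follow from the transversality-of-self-intersections analysis carried out in \cite{Chodosh}.
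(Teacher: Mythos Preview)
Your strategy is precisely the ``a priori easiest way'' the paper first contemplates and then rejects. The gap is in the compactness step showing $\mathcal{B}_{\Gamma,M}$ is closed. Your combinatorial type $\Gamma=(P;m_1,\ldots,m_P)$ records only the number of components and their multiplicities, not the self-intersection pattern of each component. Your functional $F_2^{\mathrm{tr}}$ controls angles only at \emph{existing} self-intersections of $\gamma$; it says nothing about pairs $s\neq t$ with $\gamma(s)\neq\gamma(t)$. Hence a sequence $\gamma_n$ of, say, \emph{embedded} closed geodesics (for which $F_2^{\mathrm{tr}}$ is vacuous) can converge in $C^2$ to an immersed geodesic with a tangential self-touching, and more generally new self-intersections can form in the limit that were never seen by $F_2^{\mathrm{tr}}$ along the sequence. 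Your inter-component functionals $d^{i,j}$ do not help, since the issue is intra-component. You flag this obstacle yourself and defer to \cite{Chodosh}, but no such blanket compactness statement is available there; one really must stratify by the self-intersection combinatorics.

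The paper's fix is to treat each almost embedded closed geodesic $\gamma:S^1\to M$ as an \emph{embedded} stationary geodesic net $f:\Gamma\to M$ on the quotient graph $\Gamma=S^1/\!\sim$ (where $s\sim t$ iff $\gamma(s)=\gamma(t)$), together with the datum $r$ recording which pairs of edge-ends at each vertex glue smoothly (so that the almost embedded parametrization can be recovered from $(f,\Gamma,r)$). One then indexes by triples $(\Gamma,r,M)$ and imposes exactly the embedded-net functionals $F_1$, $F_2^{(E_1,i_1),(E_2,i_2)}$, $d^E$, $d^{E,E'}$ from Proposition~\ref{Perturbation}. These functionals \emph{do} prevent new intersections (via $d^E$, $d^{E,E'}$) and keep vertex angles nondegenerate (via $F_2$), so the closedness of $\mathcal{B}_{\Gamma,r,M}$ follows verbatim from the proof of Proposition~\ref{Perturbation}, and the relations in $r$ pass to the limit by $C^2$ convergence. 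After this repackaging, the rest of your outline (Smale transversality, Sard--Rademacher, Lebesgue density points, Lemma~\ref{Diff}, Lemma~\ref{lcountable}) goes through unchanged. In short, your high-level plan is right, but the compactness step requires refining the combinatorial index from $(P;m_1,\ldots,m_P)$ to $(\Gamma,r)$ and working in the embedded-net category rather than directly with maps from $S^1$.
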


\begin{proof}
    We are going to adapt the proof of Proposition \ref{Perturbation} by introducing some necessary changes. A priori, the easiest way to do this seems to be substituting ``stationary geodesic network'' by ``finite union of almost embedded closed geodesics'' everywhere and use the Bumpy metrics theorem for almost embedded minimal submanifolds proved by Brian White in \cite{WhiteCinfty}. Nevertheless, there is not an easy condition (analog to conditions (1) to (7) in the proof of Proposition \ref{Perturbation}) that we can impose on a sequence of almost embedded closed geodesics to converge to another almost embedded closed geodesic without classifying them by their self-intersections and the angles formed there. Therefore, what we will do is to treat the almost embedded closed geodesics as a certain class of stationary geodesic networks, and then proceed as with Proposition \ref{Perturbation}.
    
    To each almost embedded closed geodesic $\gamma:S^{1}\to M$ we can associate a connected graph $\Gamma =S^{1}/\sim$ where $\sim$ is the equivalence relation $s\sim t$ if and only if $\gamma (s)=\gamma (t)$. This induces a map $f:\Gamma\to M$ defined as $f([t])=\gamma(t)$. Observe that the as the self-intersections of $\gamma$ are transverse, the vertices of $\Gamma$ are mapped precisely to those self-intersections and the map $f:\Gamma\to M$ is injective. Moreover, $\Gamma$ is a good multigraph and $f:\Gamma\to (M,g)$ is an embedded stationary geodesic network. We replace the set $\{\Gamma_{i}\}_{i\in\mathbb{N}}$ which in the proof of Proposition \ref{Perturbation} is the set of all good connected multigraphs by the countable set of pairs $\mathcal{P}=\{(\Gamma,r)\}$ where $\Gamma$ is a good multigraph which can be obtained as $\Gamma=S^{1}/\sim$ from an almost embedded closed geodesic $\gamma:S^{1}\to(M,g)$ with respect to some metric $g$ as before and $r$ is the set of pairs $((E_{1},i_{1}),(E_{2},i_{2}))$ such that $\pi_{E_{1}}(i_{1})=\pi_{E_{2}}(i_{2})$ and $(-1)^{i_{1}+1}\frac{\dot{f}_{E_{1}}(i_{1})}{|\dot{f}_{E_{1}}(i_{1})|_{g}}=(-1)^{i_{2}}\frac{\dot{f}_{E_{2}}(i_{2})}{|\dot{f}_{E_{2}}(i_{2})|_{g}}$ (in other words, $r$ contains the necessary information to reparametrize the geodesic net $f:\Gamma\to M$ to an immersed closed geodesic $\gamma:S^{1}\to (M,g)$). Observe that if $(\Gamma,r)\in\mathcal{P}$ and $f:\Gamma\to (M,g)$ is an embedded stationary geodesic network verifying $(-1)^{i_{1}+1}\frac{\dot{f}_{E_{1}}(i_{1})}{|\dot{f}_{E_{1}}(i_{1})|_{g}}=(-1)^{i_{2}}\frac{\dot{f}_{E_{2}}(i_{2})}{|\dot{f}_{E_{2}}(i_{2})|_{g}}$ for every $((E_{1},i_{1}),(E_{2},i_{2}))\in r$ then $f:\Gamma\to (M,g)$ can be reparametrized as an immersed closed geodesic $\gamma:S^{1}\to (M,g)$ whose self intersections occur precisely at the points $\{f(v):v\text{ vertex of }\Gamma\}$. 
    
    Taking the previous into account, instead of the $\tilde{B}_{\Gamma,M}$ in the proof of Proposition \ref{Perturbation} we will work with the following. Consider the set of pairs $(\Gamma,r)$ where $\Gamma$ is a graph, $\Gamma=\bigcup_{i=1}^{P}\Gamma_{i}$ as a union of connected components, $r=(r_{i})_{1\leq i\leq P}$ and $(\Gamma_{i},r_{i})\in\mathcal{P}$ for every $1\leq i\leq P$. Given such a pair $(\Gamma,r)$ and a natural number $M\in\mathbb{N}$ we define $\mathcal{B}_{\Gamma,r,M}$ to be the set of all $t\in (-1,1)^{N}$ such that there exists a stationary geodesic network $f:\Gamma\to (M,g'(t))$ verifying
    \begin{enumerate}
    \item For each $1\leq i\leq P$, $f_{i}=f|_{\Gamma_{i}}$ is an embedding and verifies the relations \linebreak $(-1)^{i_{1}+1}\frac{\dot{f}_{i,E_{1}}(i_{1})}{|\dot{f}_{i,E_{1}}(i_{1})|_{g'(t)}}=(-1)^{i_{2}}\frac{\dot{f}_{i,E_{2}}(i_{2})}{|\dot{f}_{i,E_{2}}(i_{2})|_{g'(t)}}$ for every $((E_{1},i_{1}),(E_{2},i_{2}))\in r_{i}$.
    \item $\Vert f_{i}\Vert_{3}\leq M$ for every $1\leq i\leq P$.
    \item $F_{1}(g'(t),f_{i})\geq\frac{1}{M}$ for every $1\leq i\leq P$.
    \item $F_{2}^{(E_{1},i_{1}),(E_{2},i_{2})}(g'(t),f_{i})\leq 1-\frac{1}{M}$ for every $1\leq i\leq P$,  and every pair $(E_{1},i_{1})\neq (E_{2},i_{2})\in\mathscr{E}_{i}\times\{0,1\}$ such that $\pi_{E_{1}}(i_{1})=\pi_{E_{2}}(i_{2})$.
    \item $d^{E}_{(g'(t),f_{i})}(s)\geq\frac{1}{M}$ for every $1\leq i\leq P$, $E\in\mathscr{E}_{i}$ and $s\in E$.
    \item $d^{E,E'}_{(g'(t),f_{i})}(s)\geq\frac{1}{M}$ for every $1\leq i\leq P$, $E\neq E'\in\mathscr{E}_{i}$ and $s\in E$.
    \item $\omega_{p}^{1}(g'(t))=\length_{g'(t)}(f)$.
\end{enumerate}

Therefore, same as in Proposition \ref{Perturbation} we have $I^{N}=\bigcup_{\Gamma,r,M}\mathcal{B}_{\Gamma,r,M}$ because of the fact showed in \cite{Chodosh} that the $p$-widths on surfaces are realized by unions of almost embedded closed geodesics; and each $\mathcal{B}_{\Gamma,r,M}$ is closed. The rest of the proof follows exactly as in Proposition \ref{Perturbation} if we replace the pairs $(\Gamma,M)$ by the triples $(\Gamma,r,M)$.
\end{proof}

One more remark is necessary to adapt the proof of Proposition \ref{PropP}. The sequences $(\gamma_{j,m})_{m}$ in (\ref{limit nets}) have length uniformly bounded by some $L>0$ and consist of finite unions of almost embedded closed geodesics. This implies that the number of closed geodesics whose union is $\gamma_{j,m}$ is also bounded (independently on $m$). Thus by applying Arzela-Ascoli to each of those components we can get a subsequence whose limit is not only a stationary geodesic net but also a union of closed curves with uniform convergence in $C^{0}$. The rest of the proof follows that of Theorem \ref{thm3} word for word.

\bibliography{Bibliography}
\bibliographystyle{amsplain}

\end{document}